\newif\iffull
\newif\ifwreath
\newtheorem{thm}{Theorem}[section]
\newtheorem{cor}[thm]{Corollary}
\newtheorem{lem}[thm]{Lemma}
\newtheorem{prop}[thm]{Proposition}
\theoremstyle{definition}
\theoremstyle{remark}
\newtheorem{rem}[thm]{Remark}
\newtheorem{exam}[thm]{Example}
\numberwithin{equation}{section}
\long\def\forget#1\forgotten{}
\newcommand{\C}{\mathcal{C}}
\newcommand{\D}{\mathcal{D}}
\newcommand{\E}{\mathcal{E}}
\newcommand{\mQ}{\mathbb{Q}}
\newcommand{\mC}{\mathbb{C}}
\newcommand{\mP}{\mathbb{P}}
\newcommand{\Z}{\mathbb{Z}}
\newcommand{\Q}{\mathbb{Q}}
\newcommand{\ra}{\rightarrow}
\DeclareMathOperator\Gal{Gal}
\DeclareMathOperator{\Red}{Red}
\DeclareMathOperator\height{ht}
\def\({\left(}
\def\){\right)}
\newcommand\oline[1] {{\overline{#1}}}
\newcommand\elll{{n}}
\newcommand\lcm{{\operatorname{lcm}}}
\DeclareMathOperator\Sym{Sym}
\newcommand\gOrbs[2]{{
			\text{Orbs}_{#1}({#2})
}}
\newcommand\uos[2]{{
		{#1}^{\{#2\}}
}}
\newcommand\uosS[4]{{
		{#1}^{\{#2\}} \cup {#3}^{\{#4\}}
}}
\newcommand\orbs[2]{{
		O(\uos{#1}{#2})
}}
\newcommand\orbsS[4]{{
		O(\uosS{#1}{#2}{#3}{#4})
}}
\newcommand\enset[2]{ \{#1,\dots,#2\}}
\newcommand\nset[1]{\enset{1}{#1}}
\newcommand\idef[1] {{\it #1}}
\begin{document}

\title
{Symmetric Galois Groups under Specialization}%

\def\technion{Department of Mathematics, Technion - IIT, Haifa 3200, Israel}
\author{ Tali Monderer}
\address{\technion}
\email{tali.monderer@gmail.com}%
\author{ Danny Neftin}
\address{\technion}
\email{dneftin@technion.ac.il}%
\maketitle
\begin{abstract}
    Given an irreducible bivariate polynomial $f(t,x)\in \mQ[t,x]$, what groups $H$ appear as the Galois group of $f(t_0,x)$ for infinitely many $t_0\in \mQ$? How often does a group $H$ as above appear as the Galois group of $f(t_0,x)$, $t_0\in \mQ$?  We give an answer for $f$ of large $x$-degree with alternating or symmetric Galois group over $\mQ(t)$. This is done by determining the low genus subcovers of coverings $\tilde{X}\rightarrow \mP^1_{\mC}$ with alternating or symmetric monodromy groups. 
\end{abstract}

\section{Introduction}\label{sec:intro}
Let $f(t,x)\in \mQ(t)[x]$ be a polynomial with coefficients depending on a parameter $t$ and let $G$ be its Galois group. 
For all but finitely many specializations $t\mapsto t_0\in \mQ$, the Galois group $\Gal(f(t_0,x),\mQ)$ is a subgroup of $G$; and Hilbert's irreducibility theorem guarantees that the Galois group remains $G$ for infinitely many $t_0\in \mQ$. It may still hold that a proper subgroup of $G$ occurs for infinitely many $t_0\in\mQ$. For example, the polynomial $f(t,x):=x^2-t$ has a nontrivial Galois group over $\mQ(t)$, and all specializations of the form $t\mapsto q^2$ for some $q\in \mQ$  yield a rational polynomial that splits in $\mQ$. 
Given a polynomial $f(t,x)\in \mQ(t)[x]$, what subgroups of $\Gal(f(t,x),\mQ(t))$ occur as $\Gal(f(t_0,x),\mQ)$ for infinitely many rational $t_0\in \mQ$?

We are interested in the ``general" case where $G:=\Gal(f(t,x),\mQ(t))$ is a symmetric group $S_n$. Most notably it is known that: 1)  every intransitive $H\leq S_n$ for $n>5$, that occurs as $\Gal(f(t_0,x),\mQ)$ for infinitely many integral $t_0\in\Z$, must be contained in $S_{n-1}$ \cite{Mul2}; and 2)  a maximal subgroup $H\leq S_n$, for sufficiently large $n$, that occurs as $\Gal(f(t_0,x),\mQ)$ for infinitely many rational $t_0\in \mQ$ must be either $S_{n-1}$ or $S_{n-2}\times S_2$ \cite{NZ}. 
In this work we answer the above question  when $G=S_n$ for large $n$, with no maximality assumption on $H$. Our main result is the following theorem. For $g\geq 0$, let $N_g$ be the constant defined in Remark \ref{rem:same-const}.
\begin{thm}\label{thm:spec}
	Let $f(t,x)\in \Q(t)[x]$ be a polynomial with Galois group  $A_n$ or $S_n$ for  $n>N_1$. 
	Suppose $H\cong\Gal(f(t_0,x),\mQ)$ for infinitely many $t_0\in \mQ$. Then either 
	\begin{enumerate}
		\item $H=A_n$ or $S_n$; or
		\item $H=A_{n-1}$ or $S_{n-1}$; or 
		\item \label{specialization case non-generic} $A_{n-2}\lneq H \leq S_{n-2}\times S_2$.
	\end{enumerate}
	Case (3) occurs only with explicit ramification listed in Proposition \ref{prop:ramification}.
\end{thm}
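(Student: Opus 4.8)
The plan is to convert the arithmetic hypothesis into a statement about rational points on intermediate covers, and then into a purely group-theoretic classification of low genus subcovers. Let \(\wh{X}\to \mP^1_t\) be the Galois closure of the cover attached to \(f\), with monodromy group \(G\in\{A_n,S_n\}\) and branch cycle description \((\sigma_1,\dots,\sigma_r)\), so that \(\sigma_1\cdots\sigma_r=1\) and \(\langle\sigma_1,\dots,\sigma_r\rangle=G\). For a subgroup \(H\leq G\), the intermediate cover \(\wh{X}/H\to\mP^1\) realizes the action of \(G\) on the coset space \(\Omega_H=G/H\). If \(\Gal(f(t_0,x),\Q)\cong H\) for infinitely many \(t_0\in\Q\), then a fortiori \(\Gal(f(t_0,x),\Q)\) is contained in a conjugate of \(H\) for infinitely many \(t_0\); by the standard specialization dictionary (in the style of Beckmann--Black and D\`ebes--Fried) such \(t_0\) lift to \(\Q\)-points on one of finitely many twists of \(\wh{X}/H\). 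As twisting preserves the genus, Faltings' theorem forces \(\wh{X}/H\) to possess an irreducible component of genus \(\leq 1\). It therefore suffices to determine the subgroups \(H\leq G\) for which \(\wh{X}/H\) has genus \(\leq 1\), uniformly over all admissible branch data.

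Riemann--Hurwitz governs the genus of the subcover through the degree \(d=[G:H]\) formula
\[
2\,g_{\wh{X}/H}-2 \;=\; -2d+\sum_{i=1}^{r}\ind_{\Omega_H}(\sigma_i),
\qquad \ind_{\Omega_H}(\sigma)=d-c_{\Omega_H}(\sigma),
\]
where \(c_{\Omega_H}(\sigma)\) denotes the number of \(\langle\sigma\rangle\)-orbits on \(\Omega_H\). Thus genus \(\leq 1\) is equivalent to \(\sum_{i=1}^{r}\ind_{\Omega_H}(\sigma_i)\leq 2d\), i.e.\ to \(\sum_{i=1}^{r}c_{\Omega_H}(\sigma_i)\geq (r-2)d\): the inertia elements must retain anomalously many orbits (in particular fixed points) on \(\Omega_H\). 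A preliminary reduction records that \(r\geq 3\) --- since \(r\leq 2\) would force \(G\) cyclic --- and that generation of \(A_n\) or \(S_n\) forces the \(\sigma_i\) to move many points in the natural degree \(n\) action.

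The classification proceeds according to the Aschbacher--O'Nan--Scott type of the action of \(G\) on \(\Omega_H\): intransitive, imprimitive, or primitive. Eliminating the non-standard primitive and the imprimitive actions is the technical heart of the argument and its principal obstacle: it rests on fixed-point-ratio bounds for primitive groups (Liebeck--Shalev, Guralnick--Magaard), on the bounds for indices of primitive groups (Bochert, Praeger--Saxl), and on the genus-zero/low-genus machinery for \(A_n\) and \(S_n\) monodromy (Guralnick--Thompson, Frohardt--Magaard), refined here into a classification not merely of the groups but of the \emph{transitive actions} of \(A_n\) and \(S_n\) producing genus \(\leq 1\) covers, valid uniformly in the branch data. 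The outcome is that for \(n>N_1\) only intransitive \(H\) survive, with \(\Omega_H\) refining the set of \(k\)-subsets of \(\{1,\dots,n\}\); a direct index computation comparing \(\ind_{\Omega_H}(\sigma_i)\) on \(k\)-subsets with \(2\binom{n}{k}\) then shows the genus exceeds \(1\) once \(k\geq 3\), leaving \(k\in\{0,1,2\}\). This is precisely the determination of low genus subcovers announced in the abstract, and \(N_1\) is the degree threshold beyond which it holds.

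It remains to read off the three cases from \(k\in\{0,1,2\}\). Here \(k=0\) gives \(H=G\), namely case (1), and \(k=1\) gives a point stabilizer, so \(H\cong A_{n-1}\) or \(S_{n-1}\), case (2). For \(k=2\) the subgroup \(H\) lies between the two-point stabilizer and the full two-subset stabilizer; intersecting with \(G\) and distinguishing the two-subset action (degree \(\binom{n}{2}\)) from the ordered-pair action (degree \(n(n-1)\)) yields \(A_{n-2}\lneq H\leq S_{n-2}\times S_2\), case (3). The lower bound \(A_{n-2}\lneq H\) reflects that the geometric monodromy of the specialized subfamily still contains \(A_{n-2}\): the genus \(\leq 1\) base curve carries a residual cover whose monodromy is at least \(A_{n-2}\), so by Hilbert irreducibility along that base the specialization group cannot drop below it. Finally, to extract the explicit ramification in case (3) one imposes the genus \(\leq 1\) inequality directly on the two-subset cover; this forces the cycle types of the \(\sigma_i\) in the natural action into a short finite list, which is recorded in Proposition \ref{prop:ramification}.
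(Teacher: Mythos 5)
Your overall skeleton matches the paper's: the specialization dictionary plus Faltings reduces everything to classifying subgroups $H\leq G$ for which $\tilde X/H$ has genus $\leq 1$, and the three cases are then read off from that classification. The genuine gap is precisely at the classification step, which is the paper's main content and which your proposal replaces by an assertion. From ``the genus of the $k$-set quotient exceeds $1$ for $3\leq k\leq n/2$'' one may legitimately conclude that an intransitive $H$ with $g_{\tilde X/H}\leq 1$ is \emph{contained in} a point stabilizer or a $2$-set stabilizer (since genus cannot decrease when passing to a subcover). But you then write that $k=1$ ``gives a point stabilizer, so $H\cong A_{n-1}$ or $S_{n-1}$,'' and that for $k=2$ the group $H$ ``lies between the two-point stabilizer and the full two-subset stabilizer.'' Containment in $S_{n-1}$ (resp.\ $S_{n-2}\times S_2$) gives no lower bound on $H$ at all: a priori $H$ could be any subgroup of $S_{n-1}$ whatsoever, e.g.\ a primitive group of degree $n-1$ or a small group with many orbits. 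None of the results you invoke (Liebeck--Shalev fixed-point ratios, Bochert, Praeger--Saxl, Guralnick--Thompson, Frohardt--Magaard) addresses quotients by such non-maximal intransitive subgroups; they concern primitive actions and composition factors. And your only argument for the lower bound $A_{n-2}\lneq H$ --- that the base curve ``carries a residual cover whose monodromy is at least $A_{n-2}$, so by Hilbert irreducibility along that base the specialization group cannot drop below it'' --- is circular: that the residual monodromy is large is exactly what must be proved.

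The paper closes this gap with machinery that is entirely absent from your proposal: the Guralnick--Shareshian inequality $g_{\Omega^H}\geq \sum_i \bigl(O_i(H)-O_{i-1}(H)\bigr)\bigl(g_i-g_{i-1}\bigr)$ of \cite{GS}, which bounds the genus of the quotient by an \emph{arbitrary} subgroup $H$ in terms of its orbit counts on $i$-sets, combined with the growth $g_i-g_{i-1}>g$ from \cite{NZ}. Together these force $O_2(H)=\cdots=O_D(H)$ (Theorem \ref{thm:NZ orbits condition}), and then purely group-theoretic arguments --- Livingstone--Wagner, Goursat's lemma, and the choice of $D$ so that $D$-homogeneous groups of degree $n-1$ or $n-2$ contain the alternating group (Proposition \ref{prop:groups}, Corollary \ref{cor:groups}) --- show that an $H$ inside a point or $2$-set stabilizer satisfying this orbit condition must be one of $A_{n-1}$, $S_{n-1}$, $A_{n-2}$, $S_{n-2}$, $S_2\times S_{n-2}$, $S_2\times_{C_2}S_{n-2}$, $S_2\times A_{n-2}$; Proposition \ref{prop:ramification} then rules out $A_{n-2}$ itself and supplies the explicit ramification in case (3). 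Without this (or an equivalent mechanism) your reduction to $k\in\{0,1,2\}$ does not yield the theorem. Two smaller inaccuracies: the coset action of $G$ on $G/H$ is always transitive, so your trichotomy ``intransitive, imprimitive, primitive'' must refer to the action of $H$ on $\{1,\dots,n\}$, not of $G$ on $\Omega_H$; and transitive $H$ are eliminated not by fixed-point-ratio bounds applied to $H$, but by passing to a maximal overgroup $M\supseteq H$ (whose quotient has no larger genus) and citing the classification of maximal subgroups with low-genus quotient \cite[Theorem 1.1]{NZ}, which is how the paper argues.
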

Assume $\deg f=n$\footnote{Note that the theorem allows $\deg f\neq n$, that is, the Galois group may act in an arbitrary permutation representation.}. The most probable case in which $f(t_0,x)$ is reducible is case (2), where $f(t_0,x)$ factors as a product of a linear factor and an irreducible factor of degree $n-1$.
This case appears with growth rate: 
$$ \#\{t_0\in \mQ\,|\, \height(t_0)\leq N, \Gal(f(t_0,x))\cong A_{n-1}\text{ or }S_{n-1}\}  \asymp N^{2/n}, $$
where $\height$ is the natural height $\height(\frac{m}{n})=\max\{|m|,|n|\}$ for coprime $m,n\in \Z\setminus\{ 0\}$, and $f\asymp g$ for $f,g:\mathbb{N}\ra\mathbb R^+$ means that $c_1g(n)\leq f(n)\leq c_2g(n)$ holds for all $n>n_0$, where $n_0\in \mathbb N$ and $c_1,c_2\in \mathbb R$ are positive constants. 
In case (3), $f(t_0,x)$ has an irreducible factor of degree $n-2$. This is the next probable reducible case, appearing with growth $\asymp N^{4/n(n-1)}$. Case (1) is the most probable one with growth $\asymp N^2$, as the complement of cases (2), (3) and a finite set.
The growth of specializations with Galois group $H$ is inferred from the index $[G:H]$ using \cite[\S 9.7, Case 0]{Ser97}. 

Theorem \ref{thm:spec} applies to polynomials over any finitely generated field of characteristic $0$, and moreover, each of the options (1)-(3) occurs for infinitely many specializations over some number field. As case (3) happens only for specific polynomials, these are the only polynomials with Galois group $A_n$ or $S_n$ for which $f(t_0,x)$ has an irreducible factor $h\in \mQ[x]$ of degree $2\leq \deg h\leq n-2$ for infinitely many $t_0\in \mQ$.

\subsubsection*{Low genus subfields and their group-theoretic description}
The main ingredient in proving Theorem \ref{thm:spec} is classifying low genus covers with monodromy $A_n$ or $S_n$. Here, for simplicity assume $f$ is irreducible over $\mC$ and denote by $X$ the curve defined by $f$, cf.~\S\ref{sec:Hilbert} for the reducible scenario. Let $\pi:X\ra\mP^1_\mC$ be the projection to the $t$-coordinate, and $\tilde X$ be its Galois closure. Thus the Galois group $G$ acts on $\tilde X$ and $\tilde X/(G\cap S_{n-1})\cong X$, cf.~\S\ref{sec:coverings}. 

It is well known that a subgroup $H\leq G$ appears as the Galois group of $f(t_0,x)$ for infinitely many $t_0\in \mQ$ only when $\tilde X/H$ is of genus $\leq 1$, cf.~\S\ref{sec:Hilbert}. 
The maximal subgroups $H\leq G\in \{A_n,S_n\}$ for which $\tilde X/H$ is of genus $\leq 1$ were classified in \cite{GS} and \cite{NZ}. We do this for arbitrary subgroups of $A_n$ or $S_n$: 

\begin{thm}\label{thm:main-new}
	Let $g\geq 0$ and  $\pi: X\ra \mP^1_\mC$ be a covering of degree $n>N_g$, Galois closure $\tilde X$, and monodromy group $G=A_n$ or $S_n$. 
	Suppose $H\leq G$ does not contain $A_{n-1}$, and $\tilde X/H$ is of genus at most $g$.
	Then $A_{n-2}\lneq H \leq S_{n-2}\times S_2$, and the ramification of $\pi$ is listed in Proposition \ref{prop:ramification}. In fact, $\tilde X/H$ is of genus at most 1.
\end{thm}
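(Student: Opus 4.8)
The strategy is to leverage the known classification of low-genus \emph{maximal} subgroup quotients and propagate it down the whole subgroup lattice using monotonicity of genus. The starting point is that for $H\le K\le G$ the natural map $\tilde X/H\to\tilde X/K$ is a covering of degree $[K:H]\ge 1$, so Riemann--Hurwitz gives $g(\tilde X/H)\ge g(\tilde X/K)$. Consequently $g(\tilde X/H)\le g$ forces $g(\tilde X/K)\le g$ for \emph{every} overgroup $K\supseteq H$, and in particular for every maximal $M$ with $H\le M\le G$. I would then feed this into the classification of maximal subgroups $M\le A_n,S_n$ with $g(\tilde X/M)\le g$ from \cite{GS} (and \cite{NZ} when $g\le1$): for $n>N_g$ the primitive maximal subgroups have super-exponential index and the imprimitive wreath-type maximal subgroups have super-polynomial index, so both give genus exceeding $g$ and are ruled out. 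What remains is that every maximal overgroup of $H$ is a subset stabilizer $M_k:=G\cap(S_k\times S_{n-k})$ with $k$ bounded in terms of $g$; in particular a transitive $H\ne G$ lies only in excluded maximal subgroups, so $H$ must be intransitive on $\Omega=\{1,\dots,n\}$.

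The core is a quantitative genus estimate for the subset-stabilizer covers. Writing the branch-cycle description $\sigma_1,\dots,\sigma_r$ of $\pi$ (with $\prod_i\sigma_i=1$ and $\langle\sigma_1,\dots,\sigma_r\rangle=G$), the cover $\tilde X/M_k\to\mP^1_\mC$ is the action of $G$ on $k$-subsets, whose genus is computed from Riemann--Hurwitz once one records, for each $\sigma_i$, the number of $\langle\sigma_i\rangle$-orbits on $k$-subsets; this is packaged by the cycle-type generating function $\prod_j(1+x^j)^{a_j(\sigma_i)}$, where $a_j(\sigma_i)$ is the number of $j$-cycles of $\sigma_i$. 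The decisive point is a tension: generating $A_n$ or $S_n$ forces the $\sigma_i$ to be rich enough that $\sum_i\ind_{k\text{-sets}}(\sigma_i)$ overshoots $2\binom{n}{k}-2+2g$ unless the ramification of $\pi$ is extremely mild. Carrying this out shows that for $n>N_g$ one has $g(\tilde X/M_k)>g$ whenever $k\ge3$, and that $g(\tilde X/M_2)\le g$ only for the short explicit list of ramification types recorded in Proposition \ref{prop:ramification}. Hence $H$ has no invariant subset of size in $[3,n-3]$.

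From here the orbit structure of $H$ is forced: since unions of $H$-orbits are invariant subsets and $M_k\cong M_{n-k}$, the absence of invariant subsets of size in $[3,n-3]$ means every $H$-orbit has size $\le2$ or $\ge n-2$, so $H$ has one orbit of size $n-1$ or $n-2$ and at most two further points. The size-$(n-1)$ case makes $H$ transitive of degree $n-1$ on its large orbit; inducing $A_{n-1}$ or $S_{n-1}$ there would force $A_{n-1}\le H$, contrary to hypothesis, while a proper transitive induced group is excluded by the genus bound one dimension down. Hence $H$ has a single orbit of size $n-2$, with the remaining two points transposed or fixed, so $H\le M_2=G\cap(S_{n-2}\times S_2)$, and the induced group on the large orbit is $A_{n-2}$ or $S_{n-2}$. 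A short analysis of subgroups of $S_{n-2}\times S_2$ projecting onto $A_{n-2}$ or $S_{n-2}$ then leaves only the finitely many candidates $A_{n-2}$, $A_{n-2}\times S_2$, the two copies of $S_{n-2}$, and $S_{n-2}\times S_2$. Running Riemann--Hurwitz for each against the ramification types of Proposition \ref{prop:ramification} excludes $H=A_{n-2}$, whose larger index pushes the genus above $g$, and returns $g(\tilde X/H)\in\{0,1\}$ for the rest, giving $A_{n-2}\lneq H\le S_{n-2}\times S_2$ together with the promised collapse from $g$ to $1$.

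The step I expect to be the main obstacle is the uniform genus estimate for the $k$-subset covers and the extraction of Proposition \ref{prop:ramification}: one must show, for \emph{all} ramification types with full monodromy and uniformly in $n>N_g$, that $k\ge3$ is impossible and that $k=2$ occurs only for the listed mild ramifications. This is exactly where the threshold $N_g$ is calibrated and where the interplay between the generating-function bookkeeping for cycle counts on subsets and the requirement $\langle\sigma_1,\dots,\sigma_r\rangle\in\{A_n,S_n\}$ must be pushed through carefully, rather than treated case by case.
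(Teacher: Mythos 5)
Your first stage---monotonicity of genus under quotient maps, the classification of maximal subgroups with low-genus quotient from \cite{GS,NZ}, and the genus growth of $k$-set stabilizer quotients for $k\geq 3$ (which you need not re-derive from generating-function bookkeeping: it is \cite[Theorem 3.1]{NZ}, quoted in the paper as \eqref{equ:giNZ})---is sound and parallels the paper's own first step, which reduces to $H$ lying in a point or $2$-set stabilizer. But there is a genuine gap at the decisive point: invariant-set information cannot control how $H$ acts \emph{inside} its large orbit. After concluding that $H$ has one orbit of size $n-1$ or $n-2$, you must still exclude, for example, $H$ fixing a point and acting on the remaining $n-1$ points as a proper primitive group ($\AGL[d](p)$, $\PGaL_2(q)$, a Mathieu group, \dots) or as a transitive imprimitive group. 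Such $H$ have no invariant subsets of size in $[3,n-3]$ (blocks of imprimitivity are permuted, not invariant) and do not contain $A_{n-1}$, so nothing you have established touches them. Your one-line dismissal---``a proper transitive induced group is excluded by the genus bound one dimension down''---is not an argument: the relevant cover is $\tilde X/H\to\tilde X/M_1$, where $M_1:=G\cap S_{n-1}$, whose base is the curve $X=\tilde X/M_1$ rather than $\mP^1_\mC$ (so if $X$ has genus $\geq 1$ you would need the result cited as Remark \ref{rem:Angenus1}, applied to a minimal subcover whose genus you do not control), and whose monodromy group is $M_1$ acting on the cosets $M_1/H$---an action of potentially enormous degree, not the natural degree-$(n-1)$ action---so the maximal-subgroup classification does not apply to it directly. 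Making ``one dimension down'' rigorous would amount to an induction on $n$ that you have not set up, and which would still need a separate treatment of the positive-genus base and of non-standard actions.

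The paper closes exactly this hole with a finer tool that your proposal never invokes: the Guralnick--Shareshian inequality \cite[Lemma 2.0.13]{GS}, stated as \eqref{equ:GS sum inequality}, which bounds the genus of $\tilde X/H$ below by $\sum_{i}(O_i(H)-O_{i-1}(H))(g_i-g_{i-1})$, where $O_i(H)$ counts orbits of $H$ on $i$-sets (not invariant sets). Combined with the gaps $g_i-g_{i-1}>g$, this forces $O_2(H)=\dots=O_D(H)$ (Theorem \ref{thm:NZ orbits condition}); the purely combinatorial Proposition \ref{prop:groups}, via Livingstone--Wagner \cite{LW}, then upgrades this equality of orbit counts to $D$-homogeneity of $H$ on its large orbit, and the definition of $D$ (classification of multiply transitive groups, or Babai--Seress) yields that the induced group contains the full alternating group, which is precisely what rules out the primitive and imprimitive interlopers above. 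This orbit-counting mechanism is the missing core; without it, or a fully worked-out induction replacing it, your proof does not go through.
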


The proof of Theorem \ref{thm:spec} is straightfoward from Theorem \ref{thm:main-new} using Faltings' theorem, see \S\ref{sec:Hilbert}. 
The main ingredient in proving  Theorem  \ref{thm:main-new} is an analysis of the transitivity of the action of $H$ on unordered sets, using results such as the Livingstone--Wagner theorem and results on multiply transitive groups. The above action is connected to the genus of $\tilde{X}/H$ by two results from the classification of primitive monodromy groups: an  inequality \cite[Lemma 2.0.13]{GS} by Guralnick--Shareshian which connects the genus of $\tilde X/H$ to the genera   $g_i$ of the quotients of $\tilde{X}$ by stabilizers of sets of cardinality $i$; and the inequalities $g_{i+1}-g_i>2$ from \cite{NZ}. 

In the case of polynomial coverings, that is, when $\pi:\mP^1_\mC\ra\mP^1_\mC$ is given by a polynomial $p\in \mC[x]$, in combination with \cite{GS} we have the following further result: 
\begin{thm}\label{thm:pols}
	Let $p:\mP^1_\mC\ra\mP^1_\mC$ be a polynomial covering of degree $n>20$, monodromy group $G=A_n$ or $S_n$, and Galois closure $\tilde X$. 
	Suppose $A_{n-1}\neq H\leq G$ is nonmaximal, and $\tilde X/H$ is of genus $0$.
	Then  $H=S_{n-2}$ and 
	$p$ is the composition of the map $\mathbb A^1_\mC\ra \mathbb A^1_\mC$, $x\mapsto x^a(x-1)^{n-a}$, for $(a,n)=1$,  with linear polynomials. 
\end{thm}
 The proof is similar to that of Theorem \ref{thm:main-new}, however instead of relying on the inequalities $g_{i+1}-g_i>2$ from \cite{NZ}, we rely on estimates from \cite{GS}. See the more general Theorem \ref{thm:pols-ff} for the genus $1$ case.

\subsubsection*{Reducible specializations}
In similarity to other results concerning the genus $0$ problem, the classification of low genus subfields given in Theorems \ref{thm:main-new} and \ref{thm:pols} is expected to have many further applications. We develop here one application which is closely related to Theorem \ref{thm:spec}. Given a polynomial $f\in \mQ(t)[x]$, it is desirable to describe the set $\Red_f$ of values $t_0\in \mQ$ where $f(t_0,x)$ is (defined and) reducible over $\mQ$. This was studied in particular by Fried \cite{Fried74, Fried86}, K\"onig \cite{K16}, Langmann \cite{L94}, M\"uller \cite{Mue,Mul2,Mul3} and others.  It is well-known that for every $f$, there exists a finite set of coverings $h_i:X\rightarrow \mP^1_{\mQ}$ such that $\Red_f$ differs by a finite set from the union of value sets $\bigcup_{i=1}^m h_i(X_i(\mQ))$. We show that when the Galois group of $f$ is $A_n$ or $S_n$ for sufficiently large $n$, the number of value sets $m$ is at most $3$ and that this upper bound is sharp. 
Let $N_1$ be the constant from Remark \ref{rem:same-const} with $g=1$. 
\begin{thm}\label{thm: HIT functions count}\label{thm:HIT}
	Let $f\in \Q(t)[x]$ be an irreducible polynomial with Galois group $A_n$ or $S_n$ for $n>N_1$. Then there exist three coverings $h_i:X_i\ra\mP^1_\mQ, i=1,2,3$ over $\mQ$ such that 
	$\Red_f$ and $\bigcup_{i=1}^3 h_i(X_i(\mQ))$ differ by a finite set. 
	
	If moreover we are not in case (3) of Theorem \ref{thm:main-new}, then $\Red_f$ differs from $h_1(X_1(\mQ))\cup h_2(X_2(\mQ))$ by a finite set, for two coverings $h_i:X_i\ra\mP^1_\mQ, i=1,2$. If furthermore $\deg f =n$, then $\Red_f$ and $h_1(X_1(\mQ))$ differ by a finite set.  
\end{thm}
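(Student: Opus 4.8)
The plan is to reduce the computation of $\Red_f$ to the arithmetic of a controlled family of quotient covers of $\tilde{X}$, and then to apply Faltings' theorem together with \Tref{thm:main-new} to discard all but three of them. Write $\Omega$ for the $G$-set of roots of $f$ (so $|\Omega|=\deg f$ and $G$ acts through $\Omega\cong G/G_1$, where $G_1$ is a root stabilizer), and for $t_0\in\mQ$ outside the finite branch locus let $D_{t_0}\le G$ be the decomposition group. The starting point, which is the standard dictionary recalled in \Sref{sec:Hilbert}, is that $f(t_0,x)$ is reducible precisely when $D_{t_0}$ is intransitive on $\Omega$, i.e.\ when $D_{t_0}$ fixes some proper nonempty subset $\Delta\subsetneq\Omega$. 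Sorting such $\Delta$ into $G$-orbits and passing to the quotient cover $Y_\Delta:=\tilde{X}/G_\Delta$ (or to the appropriate $\mQ$-form of it), this dictionary yields, up to a finite set,
$$ \Red_f \;=\; \bigcup_{\Delta} h_\Delta\big(Y_\Delta(\mQ)\big), $$
the union ranging over representatives $\Delta$ of the $G$-orbits on proper nonempty subsets of $\Omega$; each value set lies in $\Red_f$ because a fixed $\Delta$ produces the proper factor $\prod_{\alpha\in\Delta}(x-\alpha)$, and conversely reducibility produces such a $\Delta$.

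First I would prune this a priori large union by genus. Since $h_\Delta$ is finite-to-one, a value set is infinite only if $Y_\Delta(\mQ)$ is infinite, which by Faltings' theorem forces $g(Y_\Delta)\le 1$; every $Y_\Delta$ of genus $\ge 2$ thus contributes only finitely many $t_0$ and may be absorbed into the error term. Hence $\Red_f$ differs by a finite set from the union over those $\Delta$ with $g(\tilde{X}/G_\Delta)\le 1$. Now I feed in \Tref{thm:main-new} with $g=1$, valid since $n>N_1$: each surviving stabilizer $U=G_\Delta$ either contains $A_{n-1}$ — whence, $\Delta$ being proper, $U\in\{A_{n-1},S_{n-1}\}$ — or satisfies $A_{n-2}\lneq U\le S_{n-2}\times S_2$, the groups of case (3). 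This isolates the two relevant families of covers.

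The final step is to organise these surviving covers into at most three. Two are of ``point type'': the curve $X=\tilde X/G_1$ of $f$ itself, whose rational points over $t_0$ record a rational root (a linear proper factor), and the natural cover $\tilde X/S_{n-1}$ (resp.\ $\tilde X/A_{n-1}$ when $G=A_n$), whose rational points force $D_{t_0}\le S_{n-1}$ and hence, since $S_{n-1}$ is intransitive on $\Omega$ once $\deg f>n$, a proper factor as well; these two coincide exactly in the natural action $\deg f=n$. The remaining stabilizers lie in the interval $(A_{n-2},S_{n-2}\times S_2]$, which has the single top element $S_{n-2}\times S_2$; the key reduction is that an inclusion $U_1\le U_2$ of realized stabilizers induces a $\mQ$-morphism $\tilde X/U_1\to\tilde X/U_2$ over $\mP^1$, so that $h_{U_1}(Y_{U_1}(\mQ))\subseteq h_{U_2}(Y_{U_2}(\mQ))$ and the lower cover is redundant. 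Thus, beneath the top group, the whole second family collapses to the single case-(3) cover $\tilde X/(S_{n-2}\times S_2)$. Combining, I conclude: at most the two point-type covers plus one case-(3) cover, i.e.\ three in total; dropping the case-(3) cover when case (3) does not occur leaves two; and identifying the two point-type covers in the natural action leaves one, namely $h_1(X_1(\mQ))$ with $X_1=\tilde X/G_1=X$.

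The main obstacle I anticipate is precisely this last count, on two fronts. Group-theoretically, one must determine which subgroups in the allowed list actually occur as setwise stabilizers $G_\Delta$ in the given — possibly non-natural ($\deg f\neq n$) — transitive representation of $A_n$ or $S_n$, in order to reduce the interval $(A_{n-2},S_{n-2}\times S_2]$ to its top element and to rule out a spurious fourth cover $\tilde X/A_{n-1}$ distinct from both $X$ and $\tilde X/S_{n-1}$. Arithmetically, one must make the dictionary of the first step precise over $\mQ$: the $Y_\Delta$ may be nontrivial $\mQ$-twists of $\tilde X/G_\Delta$, and the comparison of the geometric monodromy group controlled by \Tref{thm:main-new} with the arithmetic group $\Gal(f(t,x),\mQ(t))$ — the two differing exactly by the quadratic twist distinguishing $A_n$ from $S_n$ — must be tracked so that the rational-point/fixed-subset correspondence, and hence the value-set inclusions used above, remain valid over $\mQ$.
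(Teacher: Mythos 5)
Your overall skeleton (the reducibility/intransitivity dictionary, pruning by Faltings, then \Tref{thm:main-new} to constrain the surviving stabilizers) is the same as the paper's, which runs through Corollary \ref{cor:faltings} and the analysis of Theorem \ref{thm:spec-fld}. But your final counting step has a genuine gap: from ``$U=G_\Delta$ contains $A_{n-1}$ and $\Delta$ is proper'' you conclude $U\in\{A_{n-1},S_{n-1}\}$, and this is false in a non-natural representation (which the theorem must allow, since $\deg f\neq n$ is permitted). If the point stabilizer of the action on the roots lies inside $A_n$ --- e.g.\ in the regular representation --- then $A_n$ has two orbits on the roots and is itself the setwise stabilizer of either orbit: it contains $A_{n-1}$, it is intransitive, and $\tilde X/A_n$ can have genus $0$. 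This is not a corner case; it is precisely the paper's sharpness example ($\Omega$ the splitting field of $x^a(x-1)^{n-a}-t$, $f$ the minimal polynomial of a primitive element), where the three required covers are the quotients by $A_n$, $S_{n-1}$ and $S_{n-2}\times S_2$. Your three covers ($X=\tilde X/G_1$, $\tilde X/S_{n-1}$, $\tilde X/(S_{n-2}\times S_2)$) omit $\tilde X/A_n$, so in that example infinitely many $t_0$ with $D_{t_0}=A_n$ lie in $\Red_f$ but in none of your value sets; meanwhile $X$ itself is redundant by your own monotonicity remark, since whenever $X$ has genus $\leq 1$ the group $G_1$ lies in some maximal intransitive subgroup whose quotient also has genus $\leq 1$. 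So you arrive at the number $3$ with the wrong list of covers, and the claimed identity fails.

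Two subsidiary steps also do not survive scrutiny. First, ``$S_{n-1}$ is intransitive on $\Omega$ once $\deg f>n$'' is false in general: the orbits of $S_{n-1}$ on $G/G_1$ correspond to the orbits of $G_1$ on $\{1,\ldots,n\}$, so $S_{n-1}$ is transitive on the roots whenever $G_1$ is transitive in the natural action (e.g.\ $G_1$ generated by an $n$-cycle). Second, collapsing the case-(3) interval to its top element $S_{n-2}\times S_2$ requires that this top group itself be intransitive on the roots; an overgroup of an intransitive group can be transitive, and if it is, its value set contains specializations with irreducible fiber (infinitely many when the genus is $\leq 1$), breaking the inclusion $\bigcup_i h_i(X_i(\mQ))\subseteq \Red_f\cup(\text{finite set})$. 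The paper avoids both problems by never replacing stabilizers with overgroups: by Corollary \ref{cor:faltings} it takes the union over the actual maximal intransitive subgroups $D$ of the arithmetic group $A$ satisfying $DG=A$, observes that $D\cap G$ lies in a short explicit list, invokes Proposition \ref{prop:ramification} to see that at most one of the three $2$-set-type quotients $S_{n-2}$, $S_{n-2}\times_{C_2}S_2$, $A_{n-2}\times S_2$ can have genus $\leq 1$, and then bounds by $3$ the size of an antichain of such subgroups. Working with $D\leq A$ directly also disposes of the rationality/twist issue you flag at the end but leave unresolved, since the $X_D$ are honest $\mQ$-quotients and the fixed-subset correspondence is applied to decomposition groups inside $A$.
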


Other expected future applications of Theorem \ref{thm:main-new} stem from the relation of rational (i.e., genus $0$) subfields to problems of functional decomposition. These include determining the  arithmetically indecomposable rational functions which are geometrically decomposable \cite[\S 6]{GMS}, and the Davenport-Lewis-Schinzel problem concerning the reducibility of a polynomials of the form $f(x)-g(y)\in\mC[x,y]$ \cite{KN,DKNZ}.

{\bf Acknowledgements. } We thank Michael Zieve for helpful discussions which initiated this project. The generous financial help of the Technion and through ISF grant 577/15 and BSF grant 2014173 is gratefully acknowledged.

\section{Notation and preliminaries}\label{sec:prelim}
\subsection{Orbits and stabilizers} \label{section: orbits terminology}
All actions are left actions. 
A set of cardinality $k$ is called a $k$-set. 
If $A$ and $B$ are disjoint sets, denote by $\uosS{A}{k}{B}{\ell}$ the sets with $k$ elements from $A$ and $\ell$ elements from $B$ for $k\leq |A|$, $\ell\leq |B|$.
If a group $H$ acts  on $A$ (resp.~$B$), then $H$ also acts on 
$\uos{A}{k}$ (resp.~$\uosS{A}{k}{B}{\ell}$). Denote by $\orbs{A}{k}$ (resp.~$\orbsS{A}{k}{B}{\ell}$) the number of orbits in this action.

Given a $k$-subset $A\leq \nset{n}$,  the stabilizer of $A$ in the action of $S_n$ on $k$-sets is  conjugate to $S_k\times S_{n-k} = \Sym\{1,\ldots,k\}\times \Sym\{k+1,\ldots,n\}$.
The pointwise stabilizer of $A$  is then conjugate to $S_{n-k}$. 
We  often identify the $S_n$-set of all $k$-sets  with  $S_n/(S_k\times S_{n-k})$ via the orbit stabilizer theorem. 

\subsection{Multiply Transitive Groups}\label{section:D definition}
If a permutation group $G$ on $n$ elements has a single orbit on (ordered) tuples of distinct $k$-elements from $\nset{n}$,  it is called \idef{$k$-transitive} and if it has a single orbit on unordered $k$-sets of $\nset{n}$ it is called \idef{$k$-homogenous}. Denote the number of orbits of $G$ on unordered $k$-sets by $O_k(G)$.
A theorem of Livingstone--Wagner \cite{LW} asserts that a $k$-homogenous group $G$ is $k$-transitive for $k\geq 5$; and also that $O_{k}(G)\leq O_{k+1}(G)$ for $k\leq \lfloor\frac{n}{2}\rfloor-1$. A consequence of the classification of finite simple groups is that the only $6$-transitive groups are $A_n$ and $S_n$. Without relying on this classification, the order of transitivity of a permutation group of degree $n$, other than $A_n$ or $S_n$,  is known to be bounded by a function of $n$; one such result is Babai and Seress' elementary proof that a permutation group on $n$ elements is at most $32(\log(n))^2/\log \log n$ \cite{BS}. Take $D$ to be an integer such that $D<\lfloor \frac{n}{2} \rfloor$ and a $D$-transitive group on $n$ elements must be $A_n$ or $S_n$. When assuming the classification of finite simple groups $D=6$ suffices; Otherwise, such an integer $D$ exists but we take $D$ depending on $n$, e.g.~$\lceil 32(\log(n))^2/\log \log n\rceil$.


\subsection{Function fields and ramification}\label{section: ffs, places and ramification}
A general reference on this topic is \cite{Stich}. Let $k$ be a field of characterisitic $0$ and $\oline k$ its algebraic closure. 
A \idef{function field} over $k$ is  a finite extension  of $k(t)$ where $t$ is transcendental over $k$. 

Let  $F_2/F_1$ be an extension of function fields over $\oline k$. For a place $Q$ of $F_2$ lying over a place $P$ of $F_1$  write $e(Q|P)$ for the ramification index (cf. \cite[Definition 3.1.5]{Stich}) of $Q$ over $P$.


Let $Q_1,\dots Q_r$ be the places of $F_2$ lying above  a place $P$ of $F_1$. The multiset $E_{F_2/F_1}(P):=[e(Q_1|P),\dots,e(Q_r|P)]$ is called the {\it ramification type of $P$ in $F_2$}. The place $P$ is called a branch point of $F_2/F_1$ if $e(Q_i/P)>1$ for some $i$. Letting $S$ be the set of branch points, we recall that $S$ is finite. The multiset $\{E_{F_2/F_1}(P): P\in S\}$ is called the {\it ramification type of $F_2/F_1$}. If $F_2/F_1$ is Galois with group $G$, since the inertia group of $Q_1$ over $P$ coincides with its decomposition groups over $\oline k$, the inertia group is the stabilizer in the action of $G$ on $Q_1,\ldots,Q_r$. 
To compute the ramification in composita of extensions, we use: 
\begin{lem}[Abhyankar's Lemma {\cite[Theorem 3.9.1]{Stich}, \cite[Lemma 9.2]{NZ}}]\label{lem:Abhyankars lemma} 
	Let $F_1/F$ and $F_2/F$ be function field extensions and $F_1F_2$ their compositum. Let $Q$ be a place of $F_1F_2$ that lies over places $Q_1$,$Q_2$ and $P$ in $F_1$, $F_2$ and $F$ respectively. Then
	$$e(Q|P)=\lcm(e(Q_1|P),e(Q_2|P)).$$
	If moreover, $F_1$ and $F_2$ are linearly disjoint over $K(t)$, then the number of places $Q$, over fixed $Q_1,Q_2$ as above, is $\gcd(e(Q_1|P),e(Q_2|P))$. 
\end{lem}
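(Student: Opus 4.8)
The plan is to reduce everything to a purely local computation, since both the ramification index $e(Q|P)$ and the number of places of $F_1F_2$ lying over a fixed pair $(Q_1,Q_2)$ are determined by the completions $(\widehat{F_1})_{Q_1}$, $(\widehat{F_2})_{Q_2}$ and $\hat F_P$. Because the constant field $\oline k$ is algebraically closed of characteristic $0$, each completion is a Laurent series field over $\oline k$, all residue extensions are trivial, and every ramification is tame. Write $e_1=e(Q_1|P)$, $e_2=e(Q_2|P)$, fix a uniformizer $\tau$ at $P$, and note that in $(\widehat{F_i})_{Q_i}$ one has $\tau=u_i\tau_i^{e_i}$ for a uniformizer $\tau_i$ and a unit $u_i$. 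The first step I would carry out is to absorb the units: since $\charak k=0$ and $\oline k$ is algebraically closed, every unit of $\oline k[[\tau_i]]$ admits an $e_i$-th root, so after replacing $\tau_i$ by a suitable unit multiple we may assume $\tau=\tau_i^{e_i}$. This normalization is exactly where tameness enters, and it is the only genuinely delicate point; everything afterward is elementary manipulation with Laurent series and roots of unity.

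Next I would compute the local compositum. Set $e=\lcm(e_1,e_2)$ and choose $\sigma$ with $\sigma^{e}=\tau$. Then $(\sigma^{e/e_i})^{e_i}=\tau=\tau_i^{e_i}$, so $\sigma^{e/e_i}$ and $\tau_i$ differ by a root of unity in $\oline k$, and each $(\widehat{F_i})_{Q_i}=\oline k((\tau_i))$ embeds into $\oline k((\sigma))$. Since $\gcd(e/e_1,e/e_2)=1$, picking integers $a,b$ with $a(e/e_1)+b(e/e_2)=1$ expresses $\sigma$, up to a root of unity, as $\tau_1^{a}\tau_2^{b}$; hence $\sigma$ lies in the compositum and the local compositum is precisely $\oline k((\sigma))$. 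As $\oline k((\sigma))$ is totally ramified of degree $e$ over $\oline k((\tau))$, this yields $e(Q|P)=e=\lcm(e_1,e_2)$.

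For the count of places I would pass to the \'etale algebra $(\widehat{F_1})_{Q_1}\otimes_{\hat F_P}(\widehat{F_2})_{Q_2}\cong \oline k((\tau_1))[y]/(y^{e_2}-\tau_1^{e_1})$, whose field factors are in bijection with the places $Q$ of $F_1F_2$ above $(Q_1,Q_2)$. Writing $d=\gcd(e_1,e_2)$ and $e_i=de_i'$ with $\gcd(e_1',e_2')=1$, each root $\zeta\,\tau_1^{e_1/e_2}$ of $y^{e_2}-\tau_1^{e_1}$ generates the totally ramified extension $\oline k((\tau_1^{1/e_2'}))$ of degree $e_2'=e_2/d$, so grouping the $e_2$ roots into Galois orbits produces exactly $d$ irreducible factors, each of degree $e_2/d$; this gives $\gcd(e_1,e_2)$ places above $(Q_1,Q_2)$, consistently with $e(Q|Q_1)=e_2/d$ and the lcm formula. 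The role of the linear disjointness hypothesis is to guarantee $[F_1F_2:F]=[F_1:F][F_2:F]$, which ensures the completion of the compositum really is the compositum of completions so that these local factors account for all places and respect the fundamental degree identity. As indicated, the main obstacle is the tame unit normalization of the first step; once it is in place, both assertions follow from the local computations above.
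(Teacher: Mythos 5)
Your proposal cannot be compared against an internal argument, because the paper gives none: the lemma is imported by citation from \cite[Theorem 3.9.1]{Stich} and \cite[Lemma 9.2]{NZ}. Your proof is a correct, self-contained substitute, and it follows the classical local route: the reduction to completions is legitimate (the closure of $F_i$ in the completion of $F_1F_2$ at $Q$ is the completion of $F_i$ at $Q_i$, and a compositum of complete subfields containing the dense subfield $F_1F_2$ is the whole completion); the normalization $\tau=\tau_i^{e_i}$ is exactly where characteristic $0$ and the algebraically closed constant field enter; the identity $\gcd(e/e_1,e/e_2)=1$ for $e=\lcm(e_1,e_2)$ produces a degree-$e$ uniformizer, giving the lcm formula; and the factorization of $y^{e_2}-\tau_1^{e_1}$ over $\oline k((\tau_1))$ into $\gcd(e_1,e_2)$ irreducible factors of degree $e_2/\gcd(e_1,e_2)$ gives the place count. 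Two points deserve sharpening, though neither is a genuine gap. First, choose $\sigma$ inside the completion of $F_1F_2$ at $Q$ rather than abstractly --- e.g.\ set $\sigma:=\tau_1^{a}\tau_2^{b}$, which satisfies $\sigma^{e}=\tau$ by your own computation --- so that ``the local compositum equals $\oline k((\sigma))$'' is an equality of subfields with no auxiliary embeddings to reconcile. Second, your gloss on linear disjointness is slightly misplaced: the completion of the compositum is \emph{always} the compositum of the completions (you used this in the lcm half, where no disjointness is assumed); what linear disjointness actually provides is that the multiplication map $F_1\otimes_F F_2\to F_1F_2$ is an isomorphism (equivalently, the degree identity you state), so that $F_1F_2\otimes_F\widehat{F}_P$ decomposes as the product of all local tensor algebras and the field factors of $(\widehat{F_1})_{Q_1}\otimes_{\widehat{F}_P}(\widehat{F_2})_{Q_2}$ are in bijection with the places over $(Q_1,Q_2)$; without it the count can drop (e.g.\ $F_1=F_2$), which is exactly why the gcd statement carries the hypothesis while the lcm statement does not. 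Compared with the quoted sources, Stichtenoth's proof of the lcm statement works in the greater generality of tame ramification over arbitrary constant fields, and the gcd count is the content of \cite[Lemma 9.2]{NZ}; your argument trades that generality for a short, self-contained computation exploiting that here every residue field is $\oline k$ and all ramification is automatically tame.
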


\subsection{Relation to coverings}\label{sec:coverings}
A general reference on this topic is \cite[Chp.~4]{Fulton}. Let $k$ be algebraically closed of characteristic $0$. 
A covering over $\pi:X\ra\mP^1_{k}$ is a morphism of (smooth projective irreducible) curves over $ k$.  It is well known (\cite[Chapter 7]{Fulton} for example) that by associating to $\pi$ the function field extension $k(X)/ k(\mP^1)$, one obtains a 1-to-1 correspondence between equivalence classes of coverings $\pi:X\ra\mP^1_k$ and isomorphism classes of function field extensions $F/ k(t)$.  In particular we define the Galois closure $\tilde X$ of $\pi$ to denote the curve corresponding to the Galois closure $\Omega$ of $k(X)/ k(\mP^1)$, equipped with an action of $G:=\Gal(\Omega/ k(\mP^1))$. Note that $\pi$ is indecomposable if and only if $k(X)/k(\mP^1)$ is minimal, that is, has no nontrivial intermediate extensions. 

For a function field $F= k(X)$, we denote by $g_F$ the genus of the curve $X$. Note that $g_F=0$ if and only if $F$ is rational, that is, $F =  k(x)$. A polynomial map $\pi:\mP^1_{ k}\ra\mP^1_{ k}$ is a covering for which $\pi^{-1}(\infty) = \{\infty\}$, that is, where $\infty$ is totally ramified in the function field extension $ k(x)/ k(t)$ corresponding to $\pi$. Such a covering is given by $y\mapsto p(y)$ for some polynomial $p\in \mC[y]$, in which case $t=p(x)$; $x$ is a root of the irreducible polynomial $p(Y)-t\in k(t)[Y]$ and hence $[k(x): k(t)]=\deg p$.  We shall  translate  Theorems \ref{thm:main-new}, and \ref{thm:pols} to function fields and restrict to this language. 

Finally note that $\pi$ can also be viewed as a topological covering. The theory of coverings then gives elements $x_1,\ldots,x_r\in G$, called {\it branch cycles}, with product $x_1\cdots x_r=1$ that generate $G$. Moreover, the branch cycles correspond to the branch points $P_1,\ldots,P_r$ of $\pi$,  so that each $x_j$ generates an inertia group $I_j$ over $P_j$, for $j=1,\ldots,r$. Since $x_j$ generates $I_j$, the cardinalities of orbits of $x_i$ (its cycle structure), coincides with $E_{F/k(t)}(P_j)$.

\subsection{Relating the genus and orbits}\label{sec:genus orbits} 

 


Let $k$ be algebraically closed, $F_0$ a function field over $k$, $P$ a place of $F_0$, and $\Omega/F_0$ a Galois extension with Galois group $G$. For a subgroup $H\leq G$, set $F:=\Omega^H$,  $n:=[G:H]=[F:F_0]$, and let $I$ be a decomposition group over $P$. It is well known (see \cite[Section 3]{GTZ} for proof) that there is a one to one correspondence between places of $F$ over $P$, and orbits of $I$ acting on $G/H$.  Moreover, if $Q$ corresponds to an orbit $O$, then $e(Q/P) = |O|$.
In particular, one has the following version of the Riemann-Hurwitz formula. Denote by $\gOrbs{I_P}{G/H}$ the orbits of $I_P$ on $G/H$. 
\begin{equation}\label{equ:RH-orbits} 
\begin{array}{rl}
2(g_{F}-1) & = \displaystyle  2n(g_{F_0}-1)+
\sum_{Q\text{ place of }F}(e(Q|Q\cap F_0)-1) \\ 
& = \displaystyle 2n(g_{F_0}-1)+\sum_{P\text{ place of }F_0}\bigl(n-\#E_{F/F_0}(P)\bigr) \\
& = \displaystyle 2n(g_{F_0}-1)+\sum_{P\text{ place of }F_0}\bigl([G:H]-\#\gOrbs{I_P}{G/H}\bigr).
\end{array}
\end{equation}

For the genus of $k$-sets, we shall use the following conclusion from the works of Guralnick--Shareshian \cite{GS} and Neftin--Zieve \cite{NZ}. 
Let $D$ be as in Section \ref{section:D definition}. 
\begin{rem}\label{rem:same-const} 
For $g\geq 0$, we choose a constant $N_g$ so that (1) and (2) below hold for every Galois extension $\Omega/k(t)$ with Galois group $G=A_n$ or $S_n$ for $n>N_g$.  Let $g_i$ denote the genus of the fixed field of the stabilizer of an $i$-set for $1\leq i\leq n/2$.
\begin{enumerate} 
\item $g_i-g_{i-1}>g$  for $i=3,\ldots,D$, and if the ramification of  $\Omega^{G\cap S_{n-1}}/k(t)$ is not in \cite[Table 4.1]{NZ} (\cite[Chapter 3]{GS}) then also for $i=2$;
\item if  $H\leq G$ is maximal and $H\neq A_n$, then either $\Omega^{H}$ is of genus $>g$, or $H$ is a point stabilizer, or the ramification of $\Omega^{G\cap S_{n-1}}/k(t)$ is in \cite[Table 4.1]{NZ} and $H$ is a $2$-set stabilizer.  
\end{enumerate}
There exists such a constant by \cite{NZ} (or by \cite{GS} in case $\Omega/k(t)$ has at least $5$ branch points). More precisely, by \cite[Theorem 3.1]{NZ} there exist constants $c,d>0$ such that 
\begin{equation}\label{equ:giNZ}
g_i-g_{i-1}>(cn-di^{15})\frac {\binom{n}{i}}{\binom{n}{2}} \text{ for }i=2,\ldots,\lfloor{n}/2\rfloor,
\end{equation} and  $N_g$  is picked so that (2) holds, and the right hand side of \eqref{equ:giNZ} is $\geq g$ for $i\leq D$.
\end{rem}
\begin{rem}\label{rem:Angenus1}
	By \cite[Corollary 2.4]{GuralnickMSRI}, a minimal extension of degree $n>N_g$ and genus at most $g$ of a function field of genus at least $1$ cannot have an alternating or symmetric Galois group. 
\end{rem}
\begin{thm}
	\label{thm:NZ orbits condition}
For every Galois extension $\Omega/k(t)$ with group $G = A_n$ or $S_n$  for  $n>N_g$, and  subgroup $H\leq G$ fixing a subfield of genus at most $g$, one has: 
	\begin{equation}\label{equ: NZ orbits equation}
	O_2(H)=O_3(H)=\dots=O_{D}(H).
	\end{equation}
	Let $F$ be the subfield of $\Omega$ fixed by a point stabilizer of $G$. If the ramification type of $F/k(t)$ is not in \cite[Table 4.1]{NZ} (\cite[Chapter 3]{GS}), then $O_1(H)=O_2(H)$ as well. 
\end{thm}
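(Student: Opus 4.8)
The plan is to translate the statement into representation-theoretic language and then read off the equalities from the nonnegativity of a genus functional together with the gap estimates in Remark~\ref{rem:same-const}. Set $m_i := O_i(H) - O_{i-1}(H)$ for $1\le i\le \lfloor n/2\rfloor$. By the Livingstone--Wagner theorem \cite{LW} the sequence $O_0(H)\le O_1(H)\le\cdots\le O_{\lfloor n/2\rfloor}(H)$ is nondecreasing, so each $m_i\ge 0$; since $D<\lfloor n/2\rfloor$ this covers all $i\le D$. The asserted chain $O_2(H)=\cdots=O_D(H)$ is exactly the vanishing $m_3=\cdots=m_D=0$, and the final clause $O_1(H)=O_2(H)$ is $m_2=0$, so it suffices to show these $m_i$ vanish. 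First I would record that, writing $\pi_i$ for the permutation character of $G$ on $i$-sets and $\pi_{G/H}$ for that on $G/H$, Frobenius reciprocity gives $O_i(H)=\langle\pi_i,\pi_{G/H}\rangle$; combined with Young's rule $\pi_i=\sum_{j=0}^{i}\chi^{(n-j,j)}$ (whose two-row constituents remain irreducible on restriction to $A_n$ in our range, since $(n-j,j)$ is never self-conjugate for $j\le n/2$ and $n$ large) this identifies $m_i$ with the multiplicity $\langle\chi^{(n-i,i)},\pi_{G/H}\rangle$ of the $i$-th two-row irreducible in $\pi_{G/H}$.

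Next I would express the genus $g_{\Omega^H}$ of the fixed field $\Omega^H$ through these multiplicities. The Riemann--Hurwitz formula \eqref{equ:RH-orbits} shows that $2g_{\Omega^H}-2+2[G:H]$ depends \emph{linearly} on $\pi_{G/H}$, because the ramification contribution of each inertia group $I_P=\langle x\rangle$ is $[G:H]-\#\gOrbs{I_P}{G/H}=\tfrac{1}{|I_P|}\sum_{k}\bigl(\pi_{G/H}(1)-\pi_{G/H}(x^k)\bigr)$, a linear functional of $\pi_{G/H}$. Applying the same functional to $\pi_i$ recovers $g_i$, so, writing $\gamma(\phi)=\tfrac12 R(\phi)-\phi(1)+\langle\phi,1\rangle$ for the resulting genus functional (with $R$ the summed ramification term), linearity and $\pi_i-\pi_{i-1}=\chi^{(n-i,i)}$ give $g_i-g_{i-1}=\gamma\bigl(\chi^{(n-i,i)}\bigr)$. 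Decomposing $\pi_{G/H}=\sum_{j=0}^{\lfloor n/2\rfloor} m_j\,\chi^{(n-j,j)}+\psi$, with $\psi$ a nonnegative combination of the remaining irreducibles, I obtain $g_{\Omega^H}=\gamma(\pi_{G/H})=\sum_{j=1}^{\lfloor n/2\rfloor} m_j(g_j-g_{j-1})+\gamma(\psi)$. The crucial input is that $\gamma$ is nonnegative on every genuine character (equivalently $\gamma(\phi)=\tfrac12\dim H^1$ of the associated local system), so $\gamma(\psi)\ge 0$ and likewise $g_j-g_{j-1}=\gamma(\chi^{(n-j,j)})\ge 0$ for all $j\le\lfloor n/2\rfloor$; this nonnegativity is precisely the content of the Guralnick--Shareshian inequality \cite[Lemma 2.0.13]{GS}. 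Hence every summand is nonnegative, and in particular $g_{\Omega^H}\ge\sum_{i=3}^{D} m_i(g_i-g_{i-1})$.

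Finally I would invoke the hypothesis and Remark~\ref{rem:same-const}. By Remark~\ref{rem:same-const}(1) we have $g_i-g_{i-1}>g$ for $i=3,\ldots,D$, so if $m_i\ge 1$ for some such $i$ then $g_{\Omega^H}\ge m_i(g_i-g_{i-1})>g$, contradicting the assumption that $\Omega^H$ has genus at most $g$. Therefore $m_3=\cdots=m_D=0$, that is, $O_2(H)=\cdots=O_D(H)$. If moreover the ramification of $F/k(t)$ is not in \cite[Table 4.1]{NZ}, Remark~\ref{rem:same-const}(1) also gives $g_2-g_1>g$, and the same argument forces $m_2=0$, i.e.\ $O_1(H)=O_2(H)$. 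I expect the main obstacle to be the genus identity of the second paragraph: establishing the nonnegativity of $\gamma$ on arbitrary genuine characters (which lets me discard both the remainder term $\gamma(\psi)$ and the small-$j$ and large-$j$ contributions), and carefully matching the $A_n$- and $S_n$-representation theory so that the two-row constituents of $\pi_i$ stay irreducible and contribute exactly the gaps $g_i-g_{i-1}$; this is where \cite[Lemma 2.0.13]{GS} does the real work.
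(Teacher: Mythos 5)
Your proposal is correct and takes essentially the same route as the paper: the paper simply quotes the Guralnick--Shareshian inequality $g_{\Omega^H}\geq\sum_{i=2}^{\lfloor n/2\rfloor}(O_i-O_{i-1})(g_i-g_{i-1})$ from \cite[Lemma 2.0.13]{GS}, notes that each summand is nonnegative (Livingstone--Wagner for $O_i-O_{i-1}$, \cite[Lemma 2.0.12]{GS} for $g_i-g_{i-1}$), and then applies the gap estimates of Remark~\ref{rem:same-const} exactly as in your final paragraph, including the separate treatment of $O_1=O_2$. Your character-theoretic second paragraph (Young's rule, the genus functional $\gamma$, Chevalley--Weil-type nonnegativity) is a correct unpacking of the cited lemma rather than a different proof of the theorem, so it adds derivation detail the paper delegates to \cite{GS} but does not change the logical structure.
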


\begin{proof}[Proof of Theorem \ref{thm:NZ orbits condition}]
Set $O_i:=O_i(H)$ for all $i$. Guralnick and Shareshian \cite[Lemma 2.0.13]{GS} 
relate the genus of $\Omega^H$ to the number of $k$-orbits of the action of $H$ on $\{ 1,\dots,n \}$ as follows:
\begin{equation}\label{equ:GS sum inequality}
g_{\Omega^{H}} \geq \sum_{i=2}^{\lfloor\frac{n}{2}\rfloor}(O_i-O_{i-1})(g_{i}-g_{{i-1}})
\end{equation} 
where $g_i$ is the genus of a subfield of $\Omega$ fixed by a stabilizer of a $i$-set. 
Since $O_i-O_{i-1}\geq 0$ by the Livingstone-Wagner theorem, and since 
$g_{i}-g_{{i-1}}\geq 0$ by \cite[Lemma 2.0.12]{GS}, 
the summands of \eqref{equ:GS sum inequality} are nonnegative, and hence
\begin{equation}\label{equ: g_H bound by orbits}
g_{\Omega^H} \geq (O_i-O_{i-1})(g_{i}-g_{{i-1}})
\end{equation}
for $2\leq i \leq \lfloor \frac{n}{2} \rfloor$.
Since   $g_{\Omega^H}\leq g$ and $g_i-g_{i-1}>g$ as $n>N_g$, 
 the right hand side of \eqref{equ: g_H bound by orbits} must equal $0$ for $i=3,\ldots,D$, and hence
	$O_{2}=O_3 = \dots = O_D$. 
Similarly, if the ramification of $F/k(t)$ is not in  \cite[Table 4.1]{NZ}, then $g_2-g_1>g$ and   $O_1 = O_2$. 
\end{proof}
For polynomials,  the degree bound is further reduced to $20$ as a consequence of \cite{GS}: 
\begin{thm}	\label{thm:pols-orbits}
Let $\Omega$ be the splitting field of $f(t,x):=p(x)-t\in k(t)[x]$. Suppose $G:=\Gal(\Omega/k(t))= A_n$ or $S_n$  for  $n>20$. 
Let $H\neq A_n$ be a subgroup of $G$ that fixes a subfield of genus $0$ or $1$. 
Then $O_2(H) = O_3(H)$. 
\end{thm}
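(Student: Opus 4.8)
The plan is to mirror the proof of Theorem \ref{thm:NZ orbits condition}, replacing the input $g_i-g_{i-1}>g$ (which forced the large bound $N_g$) by a sharp estimate for the \emph{single} gap $g_3-g_2$ that is available for polynomial coverings. First I would apply the Guralnick--Shareshian inequality \eqref{equ:GS sum inequality} and, exactly as in \eqref{equ: g_H bound by orbits}, isolate the term $i=3$, using that the summands are nonnegative (Livingstone--Wagner together with \cite[Lemma 2.0.12]{GS}). This yields
\begin{equation*}
g_{\Omega^H}\geq \bigl(O_3(H)-O_2(H)\bigr)\,(g_3-g_2).
\end{equation*}
Since $\Omega^H$ has genus $0$ or $1$, it suffices to prove $g_3-g_2\geq 2$: then $O_3(H)-O_2(H)\geq 1$ would force $g_{\Omega^H}\geq 2$, a contradiction, so $O_2(H)=O_3(H)$. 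Note $3\leq\floor{n/2}$ since $n>20$, so the $i=3$ term is present.

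The core is therefore the purely ramification-theoretic claim that $g_3-g_2\geq 2$ for a degree-$n$ polynomial covering with monodromy $A_n$ or $S_n$ and $n>20$. Here I would exploit the two features special to polynomials: $\infty$ is totally ramified, so one branch cycle $\sigma$ is an $n$-cycle; and the source curve is $\mP^1$, so $g_1=0$. Applying the orbit-genus formula \eqref{equ:RH-orbits} at the level of the point stabilizer gives $\sum_j\ind(x_j)=2(n-1)$, where $\ind(x_j)$ is the index of the branch cycle $x_j$ on $\{1,\dots,n\}$; since $\sigma$ accounts for $n-1$, the finite branch cycles have total index $n-1$. Writing $c_i(\tau):=\binom{n}{i}-(\text{number of orbits of }\langle\tau\rangle\text{ on }i\text{-sets})$ for the index of $\tau$ on $i$-sets, \eqref{equ:RH-orbits} gives
\begin{equation*}
2(g_3-g_2)=\sum_j\bigl(c_3(x_j)-c_2(x_j)\bigr)-2\Bigl(\tbinom{n}{3}-\tbinom{n}{2}\Bigr).
\end{equation*}
The crucial point is that the leading cubic terms cancel: since $2\bigl(\binom{n}{3}-\binom{n}{2}\bigr)=\tfrac{n(n-5)}{6}\sum_j\ind(x_j)$, setting $\mu_\tau:=c_3(\tau)-c_2(\tau)-\tfrac{n(n-5)}{6}\ind(\tau)$ gives $2(g_3-g_2)=\sum_j\mu_{x_j}$. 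A direct cycle-type computation shows $\mu_\sigma=-\tfrac{(n-1)(n-5)}{6}$ for the $n$-cycle, while the finite branch cycles contribute nonnegatively for the cycle types that can occur (for instance $\mu=\tfrac{(n-3)(n-5)}{3}$ for a transposition and $\mu=0$ for an $(n-1)$-cycle), as a short case analysis shows. Because the monodromy is $A_n$ or $S_n$, a second $n$-cycle is impossible (it would exhaust the index budget and leave $\langle\sigma,\sigma^{-1}\rangle=C_n\neq A_n,S_n$), so the negative contribution of $\sigma$ is outweighed; evaluating the extremal $S_n$-configuration $\sigma$, an $(n-1)$-cycle, and a transposition gives $g_3-g_2=\tfrac{(n-5)^2}{12}$, which exceeds $2$ already for $n\geq 10$.

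The main obstacle is making this last step uniform: I must minimize $\sum_j\mu_{x_j}$ over \emph{all} ramification types that generate $A_n$ or $S_n$, contain an $n$-cycle, and have total finite index $n-1$, and verify that the minimum still yields $g_3-g_2\geq 2$ for every $n>20$. The generation constraint is what rules out the configurations with large negative $\mu$ (long finite cycles); without it the gap could be negative, so the group-theoretic input must be used throughout, not just to exclude a single bad case. Carrying this out requires controlling $\mu_\tau$ for every cycle type (not merely the few computed above), separating the parity constraints for $A_n$ (where odd-index cycles such as transpositions and $(n-1)$-cycles are excluded, so the extremal configuration changes) from those for $S_n$, and tracking the $\gcd(n,6)$ corrections in the orbit counts of $\sigma$ on $2$- and $3$-sets. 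This finite-but-delicate optimization is precisely the estimate worked out in \cite{GS} (Chapter 3), which I would invoke to conclude; the margin between the heuristic threshold $n\geq 10$ coming from $\tfrac{(n-5)^2}{12}>2$ and the stated bound $n>20$ comfortably absorbs these lower-order corrections.
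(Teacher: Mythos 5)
Your reduction is exactly the paper's: isolate the $i=3$ summand of the Guralnick--Shareshian inequality \eqref{equ:GS sum inequality}, use nonnegativity of the summands (Livingstone--Wagner plus \cite[Lemma 2.0.12]{GS}), and conclude $O_2(H)=O_3(H)$ once $g_3-g_2>1\geq g_{\Omega^H}$. The gap is in how you establish $g_3-g_2\geq 2$ for \emph{all} $n>20$. Your Riemann--Hurwitz bookkeeping is a reasonable heuristic (the values $\mu_\sigma=-\tfrac{(n-1)(n-5)}{6}$ and $\mu=\tfrac{(n-3)(n-5)}{3}$ for a transposition are correct up to $\gcd(n,6)$ corrections), but the assertion that the finite branch cycles ``contribute nonnegatively for the cycle types that can occur, as a short case analysis shows'' is precisely the hard optimization, not a short check --- you yourself note that long finite cycles have negative $\mu$, so generation constraints must be threaded through every case. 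You then concede this optimization to \cite{GS}, but you point to the wrong place: Chapter 3 of \cite{GS} is the classification of ramification types (the analogue of \cite[Table 4.1]{NZ} cited elsewhere in this paper); it does not contain the inequality you need.

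What the paper actually invokes is \cite[Lemma 12.0.68]{GS}, which gives $g_3-g_2>2$ only for $n>48$ unconditionally, and for $n>20$ under the extra hypothesis of at least $4$ branch points; the remaining range $20<n\leq 48$ with at most $4$ branch points requires the separate appendix result \cite[Theorem A.4.2]{GS}. This case split on the number of branch points is not a technicality that your ``comfortable margin'' between the heuristic threshold $n\geq 10$ and the stated bound $n>20$ can absorb: polynomial covers with monodromy $S_n$ and exactly $3$ branch points do occur --- your own extremal configuration ($n$-cycle, $(n-1)$-cycle, transposition) is type (I1.1) of Table \ref{table:NZ1} --- and it is exactly in this few-branch-point regime with $20<n\leq 48$ that one must verify no other configuration undercuts $\tfrac{(n-5)^2}{12}$; that verification is the content of the appendix theorem, not a consequence of your margin argument. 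With the deferral redirected to \cite[Lemma 12.0.68]{GS} and \cite[Theorem A.4.2]{GS}, split according to the number of branch points, your argument becomes the paper's proof.
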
	
\begin{proof}
Let $g_i$ be the genus of a subfield of $\Omega$ fixed by a stabilizer of an $i$-set. 
By \cite[Lemma 12.0.68]{GS}  we have $g_3-g_2>2$ if $n>48$, and furthermore for $n>20$ if there are at least $4$ branch points. 
The inequality is also shown for $20<n\leq 48$ in 
\cite[Theorem A.4.2]{GS} when there are at most $4$ branch points. 
As in the proof of Theorem \ref{thm:NZ orbits condition}, since $g_3-g_2>1\geq g_{\Omega^H}$,
\eqref{equ:GS sum inequality} shows that $O_2(H)=O_3(H)$. 
\end{proof}
Finally Table \ref{table:NZ1}  lists the possible ramification types of  $\Omega^{G\cap S_{n-1}}/k(t)$ 
in case $H$ is a $2$-set stabilizer and $n>20$, following  \cite[Theorem 1.2.1]{GS}. 
\begin{table}[htbp]
	\caption{
		\label{table:NZ1}
		First 9 cases of \cite[Table 4.1]{NZ}, their corresponding location in \cite[Chapter 3]{GS} and  possible monodromy group. In all entries,
		$a\in\{1,\ldots, \elll-1\}$ is odd and $(\elll,a)=1$.}
	\begin{equation*}\label{table:two-set-stabilizer}
	\begin{array}{| l | l | l | l |}
	\hline
	I1.1 & {[\elll], [a,\elll-a], \left[1^{\elll-2}, 2\right]}  & & S_{\elll} \\
	\hline
	I2.1 & {[\elll], [1^3,2^{(\elll-3)/2}], [1,2^{(\elll-1)/2}], \left[1^{\elll-2}, 2\right] } & \text{\cite[Proposition 3.0.24(e)]{GS}} & S_{\elll}  \\
	I2.2 & {[\elll], [1^2,2^{(\elll-2)/2}] \text{ twice}, \left[1^{\elll-2},2\right]} & \text{\cite[Proposition 3.0.24(c)]{GS}}
	& S_{\elll}
	\\
	I2.3 & {[\elll], \left[1^3,2^{(\elll-3)/2}\right], [2^{(\elll-3)/2},3] } & \text{\cite[Proposition 3.0.25(b)]{GS}}
	& S_{\elll}, A_{\elll}
	\\
	I2.4 & {[\elll], \left[1^2,2^{(\elll-2)/2}\right], [1,2^{(\elll-4)/2},3] } & \text{\cite[Proposition 3.0.25(d)]{GS}}
	& S_{\elll}\\
	I2.5 & {[\elll], \left[1,2^{(\elll-1)/2}\right], [1^2,2^{(\elll-5)/2},3]} & \text{\cite[Proposition 3.0.25(f)]{GS}}
	& S_{\elll}, A_{\elll}\\
	I2.6 & {[\elll], \left[1^3,2^{(\elll-3)/2}\right], [1,2^{(\elll-5)/2},4] } & \text{\cite[Proposition 3.0.25(a)]{GS}}
	& S_{\elll}, A_{\elll}
	\\
	I2.7 & {[\elll], \left[1^2,2^{(\elll-2)/2}\right], [1^2,2^{(\elll-6)/2},4] } & \text{\cite[Proposition 3.0.25(c)]{GS}}
	& S_{\elll}
	\\
	I2.8 & {[\elll], \left[1,2^{(\elll-1)/2}\right], [1^3,2^{(\elll-7)/2},4]} & \text{\cite[Proposition 3.0.25(e)]{GS}}
	& S_{\elll}, A_{\elll}\\
	\hline
	\end{array}\end{equation*}\end{table}

\subsection{Fiber Products} 
Given groups $H_1 , H_2, Q$ and homomorphisms $\varphi_i:G_i\rightarrow Q$, the fiber product $H_1\times_{Q}H_2$ is the following subgroup of $H_1\times H_2$:
\begin{equation}
\{(g_1,g_2)\in H_1\times H_2 : \varphi_1(g_1)=\varphi_2(g_2)\}
\end{equation}

Finally, we recall the following well known lemma (a.k.a.~Goursat's lemma). 
A group $Q$ is said to be a \idef{shared quotient} of $H_1$ and $H_2$ if there exist normal subgroups $N_1\vartriangleleft H_1$ and $N_2 \vartriangleleft H_2$ such that $H_1/N_1 \cong Q \cong H_2/N_2$.
\begin{lem}\label{lem:Goursat}
Let $H_1$ and $H_2$ be groups, and $H\leq H_1\times H_2$ a subgroup with surjective projections onto each of the coordinates. Then $H$ is a fiber product of the projections of $H_1$ and $H_2$ onto a shared quotient. 
\end{lem}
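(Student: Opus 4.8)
The plan is to follow the classical route to Goursat's lemma: extract from $H$ the two normal subgroups that record how far $H$ is from a full direct product, identify the resulting quotients, and finally recognize $H$ as the fiber product over this common quotient.

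First I would set $N_1 := \{g_1\in H_1 : (g_1,e)\in H\}$ and $N_2 := \{g_2\in H_2 : (e,g_2)\in H\}$, where $e$ denotes the relevant identity element. Each is visibly a subgroup of the corresponding $H_i$. To see $N_1\vartriangleleft H_1$, I would take $g_1\in H_1$ and $n\in N_1$; since the projection $\pi_1\colon H\to H_1$ is surjective there is some $g_2$ with $(g_1,g_2)\in H$, and conjugating $(n,e)\in H$ by $(g_1,g_2)$ gives $(g_1 n g_1^{-1},e)\in H$, so $g_1 n g_1^{-1}\in N_1$. The argument for $N_2\vartriangleleft H_2$ is symmetric, using surjectivity of $\pi_2$.

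Next I would build the shared quotient. Define $\varphi\colon H_1\to H_2/N_2$ by choosing, for each $g_1\in H_1$, any $g_2$ with $(g_1,g_2)\in H$ and setting $\varphi(g_1):=g_2 N_2$. The decisive checks are: well-definedness, since $(g_1,g_2),(g_1,g_2')\in H$ forces $(e,g_2^{-1}g_2')\in H$, whence $g_2^{-1}g_2'\in N_2$; that $\varphi$ is a homomorphism, which is immediate from closure of $H$ under multiplication; surjectivity of $\varphi$, which follows from surjectivity of $\pi_2$; and $\ker\varphi=N_1$, obtained by unwinding the definitions, using that $(g_1,g_2)\in H$ with $g_2\in N_2$ forces $(g_1,e)\in H$. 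The first isomorphism theorem then yields $\bar\varphi\colon H_1/N_1\xrightarrow{\sim}H_2/N_2$, and this common group is the shared quotient $Q$.

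Finally I would verify that $H$ is exactly the fiber product for the maps $\varphi_1\colon H_1\to Q$, the canonical projection, and $\varphi_2\colon H_2\to Q$, the composite of $H_2\to H_2/N_2$ with $\bar\varphi^{-1}$. One inclusion is clear: $(g_1,g_2)\in H$ gives $\varphi(g_1)=g_2 N_2$, hence $\varphi_1(g_1)=\varphi_2(g_2)$. For the reverse, if $\varphi_1(g_1)=\varphi_2(g_2)$ then $\varphi(g_1)=g_2 N_2$, so there is $g_2'$ with $(g_1,g_2')\in H$ and $g_2' N_2=g_2 N_2$; multiplying by the element $(e,(g_2')^{-1}g_2)\in H$ produces $(g_1,g_2)\in H$. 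I expect the only genuinely delicate points to be the normality of $N_1,N_2$ and the well-definedness of $\varphi$; both rest essentially on the surjectivity of the two projections, and this is precisely where one must ensure that the choice of $g_2$ above does not secretly depend on the representative. The remainder is routine bookkeeping with the first isomorphism theorem.
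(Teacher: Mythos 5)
Your proof is correct and complete: the subgroups $N_1,N_2$, the induced isomorphism $H_1/N_1\cong H_2/N_2$, and the verification that $H$ equals the fiber product are all carried out accurately, and this is precisely the classical proof of Goursat's lemma. The paper itself states this lemma without proof, citing it as well known, so your argument simply supplies the standard reasoning the paper implicitly relies on.
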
 
	
\section{The condition $O_2(H)=O_3(H)=\dots=O_{d}(H)$}

The following proposition summarizes the results of this section. Motivated by Theorem \ref{thm:NZ orbits condition}, it determines which $H\leq S_n$, contained in a point or $2$-set stabilizer, satisfy the condition in the title. 
\begin{prop}\label{prop:groups}
Let  $n\geq 8$. Suppose  $O_2(H)=\dots =O_d(H)$ for $H\leq S_n$ and  $d\geq 3$. 
\begin{enumerate}
\item If $H$ fixes exactly one point $n$, then $H$ acts $d$-homogenously on $\{1,\ldots,n-1\}$. 
\item If $H$ stabilizes $\{n-1,n\}$, then $H$ acts $d$-homogenously on  $\{1,\ldots,n-2\}$. 
	\end{enumerate}
\end{prop}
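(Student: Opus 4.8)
The plan is to convert the hypothesis on the orbit counts $O_k(H)$ into monotone sequences of orbit counts on the moved set, and then to squeeze. For (1), since $H$ fixes $n$ and nothing else, each $k$-set of $\nset{n}$ either avoids $n$ or adjoins $n$ to a $(k-1)$-set of $\nset{n-1}$; writing $P_j:=\orbs{\nset{n-1}}{j}$ this gives $O_k(H)=P_k+P_{k-1}$, so the hypothesis $O_k=O_{k+1}$ reads $P_{k+1}=P_{k-1}$ for $2\le k\le d-1$. The Livingstone--Wagner monotonicity $P_1\le P_2\le\cdots$ (valid for $n\ge 8$ in the range used) combines with these ``skip-one'' equalities to sandwich the sequence into $P_1=P_2=\cdots=P_d$. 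To see the common value is $1$, I would use that $H$ has no fixed point on $\nset{n-1}$: sorting $2$-sets by which point-orbits they meet gives $P_2\ge\binom{P_1}{2}+P_1$, the final $+P_1$ arising because every orbit has size $\ge 2$ and so contributes a within-orbit $2$-set. With $P_1=P_2$ this forces $\binom{P_1}{2}\le 0$, i.e.\ $P_1=1$, so $H$ is transitive and $P_d=1$.

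For (2), set $B=\{n-1,n\}$, $Q_j:=\orbs{\nset{n-2}}{j}$ and $R_j:=\orbsS{\nset{n-2}}{j}{B}{1}$. A $k$-set of $\nset n$ meets $B$ in $0$, $1$, or $2$ points, so $O_k(H)=Q_k+R_{k-1}+Q_{k-2}$. Each summand sequence is nondecreasing in the relevant range: $Q$ by Livingstone--Wagner, and $R$ by the incidence (``shadow'') map that deletes a point of the $\nset{n-2}$-part, whose surjectivity on $\Q$-fixed spaces yields $R_{j-1}\le R_j$. Hence $O_k=O_{k+1}$ forces all three differences to vanish, so $Q_{k+1}=Q_k$, $R_k=R_{k-1}$ and $Q_{k-1}=Q_{k-2}$ for $2\le k\le d-1$. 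Reading these off gives the ``top'' chain $Q_2=\cdots=Q_d$ together with $Q_0=Q_1=1$, so $H$ is transitive on $\nset{n-2}$; it remains to show the constant value $Q_2$ equals $1$.

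Here I would exploit $R_1=R_2$. Writing $R_j=Q_j+f_j$, where $f_j$ counts the $H$-orbits of $j$-sets $A'\subseteq\nset{n-2}$ whose stabiliser fixes both $n-1$ and $n$, one obtains $Q_2=1+f_1-f_2$ with $0\le f_1\le Q_1=1$, whence $f_2\le f_1$. If $H$ fixes $n-1$ and $n$ individually then $f_j=Q_j$ and $Q_2=2-Q_2$ gives $Q_2=1$. Otherwise $H$ contains an element swapping $n-1,n$; if $f_1=0$ then $Q_2=1-f_2\le 1$, while if $f_1=1$ then the point-stabiliser lies in $H_0:=\ker(H\to\Sym B)$, so $\nset{n-2}$ splits into two $H_0$-orbits $\Delta,\Delta'$ of size $(n-2)/2\ge 3$ interchanged by $H$; a $2$-set inside $\Delta$ cannot be fixed by any element swapping $\Delta,\Delta'$, so its stabiliser lies in $H_0$ and $f_2\ge 1=f_1$. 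In every case $Q_2=1$, whence $Q_2=\cdots=Q_d=1$ and $H$ is $d$-homogeneous on $\nset{n-2}$.

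The main obstacle is this last step of (2). Telescoping together with monotonicity only pins the $Q$-sequence to a constant, and the top chain $Q_2=\cdots=Q_d$ overlaps the bottom chain $Q_0=Q_1$ only when $d\ge 4$; for $d=3$ the common value is not determined by the chains alone, and closing this gap requires the finer analysis of $R_1=R_2$ above. Establishing the shadow-map monotonicity for the mixed counts $R_j$ is the technical heart, whereas the Livingstone--Wagner input and the ranges of validity are routine for $n\ge 8$ in the regime $d\le\lfloor n/2\rfloor$ relevant to Theorem~\ref{thm:NZ orbits condition}.
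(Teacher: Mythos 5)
Your proposal is correct. Part (1) is essentially the paper's own argument (Lemma \ref{lem:point} with $r=1$): the same decomposition $O_k=P_k+P_{k-1}$, the same telescoping against Livingstone--Wagner, and the same count $P_2\geq\binom{P_1}{2}+P_1$ over point-orbits to force transitivity.

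Part (2) is where you genuinely diverge. The paper splits into two cases: if $\{n-1,n\}$ is fixed pointwise it reuses Lemma \ref{lem:point} with $r=2$, and if it is an $H$-orbit it invokes Lemma \ref{lem:2-set}, which starts by importing transitivity of $H$ on $A=\nset{n-2}$ from the equality case of Livingstone--Wagner (Bundy--Hart \cite{BH}), then uses numerical LW on the $\orbs{A}{k}$ alone to deduce from $O_k=O_{k+1}$ that the mixed counts $\orbsS{A}{k}{B}{1}$ are \emph{non-increasing} in $k$, bounds $\orbsS{A}{1}{B}{1}\le 2$, and eliminates the value $2$ by an explicit count ($O_2=5$ yet $O_3\geq 6$) built on the block structure from Lemma \ref{lem:2}. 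You instead posit an a priori \emph{non-decreasing} monotonicity $R_{j-1}\le R_j$ for the mixed counts; this is the extra input your route needs, and it is valid: the incidence map $\mathbb{Q}[\uos{A}{j}]\to\mathbb{Q}[\uos{A}{j-1}]$ is surjective for $j\le |A|/2$, tensoring with $\mathbb{Q}[B]$ preserves surjectivity, and in characteristic $0$ a surjection of $H$-modules cannot increase the dimension of the fixed subspace, i.e.\ the orbit count (your phrase ``surjectivity on $\mathbb{Q}$-fixed spaces'' garbles this, but the intended argument is sound). With all three summands of $O_k=Q_k+R_{k-1}+Q_{k-2}$ monotone, every difference vanishes, which buys you two things the paper pays for separately: transitivity on $A$ comes free from $Q_0=Q_1$ (no \cite{BH} needed), and the pointwise-fixed and swapped cases are treated uniformly. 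Your endgame ($R_j=Q_j+f_j$ and the block argument giving $f_2\ge f_1$ when $f_1=1$) replaces the paper's Lemma \ref{lem:2} computation; note that your case $f_1=1$ is in fact vacuous, since it makes $H$ simultaneously imprimitive and $2$-homogeneous on $A$ --- the paper turns this into an outright contradiction, while you only need the (still valid) implication $Q_2=1$. Two further remarks: when $B$ is an $H$-orbit, $R_j$ equals the number of orbits of the point stabilizer $H_{n-1}$ on $\uos{A}{j}$, so your monotonicity also follows from plain numerical Livingstone--Wagner applied to $H_{n-1}$ acting on $A$, with no module theory; and both your proof and the paper's need the implicit range $d\le\lfloor (n-2)/2\rfloor$ for the LW inequalities, a caveat you correctly flag and which the paper absorbs into its lemmas rather than the proposition statement.
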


We first deduce the following corollary. Let $D$ be the constant from Section \ref{section:D definition}. 
\begin{cor}\label{cor:groups}
Let $n\geq 8$, and suppose $H\leq S_n$ satisfies $O_2(H)=\dots =O_{D}(H)$. 
\begin{enumerate}
\item If $H$ has exactly one fixed point, 
then $H$ is $A_{n-1}$ or $S_{n-1}$.
\item If $H$ stabilizes a $2$-set, then $H$ is  $A_{n-2}$, $S_{n-2}$, $S_2\times S_{n-2}$,  $S_2\times_{C_2}S_{n-2}$ or $S_2\times A_{n-2}$.	
\end{enumerate}
\end{cor}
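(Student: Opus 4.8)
The plan is to derive Corollary~\ref{cor:groups} from Proposition~\ref{prop:groups} together with the structural results on multiply transitive and homogeneous groups recalled in Section~\ref{section:D definition}. The key observation is that $D$ was chosen (in Section~\ref{section:D definition}) precisely so that a $D$-transitive group of degree $m$ must be $A_m$ or $S_m$, and the Livingstone--Wagner theorem guarantees that $k$-homogeneity implies $k$-transitivity for $k\geq 5$. So once Proposition~\ref{prop:groups} hands us $D$-homogeneity of the relevant restricted action, the classification is almost immediate; the remaining work is purely in translating the information about the action on the moved points back into a statement about $H$ as a subgroup of $S_n$.

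For case (1), I would argue as follows. By Proposition~\ref{prop:groups}(1), since $O_2(H)=\dots=O_D(H)$ and $H$ fixes exactly the point $n$, the group $H$ acts $D$-homogeneously on $\{1,\dots,n-1\}$. Since $D<\lfloor n/2\rfloor$ and $D\geq 5$ (note that when we do not assume the classification $D$ is large, and when we do $D=6$, so in all cases $D\geq 5$; one must keep $n$ large enough that $D\geq 5$ is compatible with $n\geq 8$, which is guaranteed by the choice of $D$), Livingstone--Wagner upgrades $D$-homogeneity to $D$-transitivity on the $n-1$ moved points. By the defining property of $D$, a $D$-transitive group on $n-1$ points is $A_{n-1}$ or $S_{n-1}$. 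Hence $H$, as a subgroup of $S_n$ fixing $n$, is exactly $A_{n-1}$ or $S_{n-1}$ in its action on $\{1,\dots,n-1\}$, which gives the claim.

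For case (2), Proposition~\ref{prop:groups}(2) gives that $H$ acts $D$-homogeneously, hence (again by Livingstone--Wagner and the choice of $D$) $D$-transitively, on $\{1,\dots,n-2\}$, so the image $\bar H$ of $H$ under restriction to these $n-2$ points is $A_{n-2}$ or $S_{n-2}$. Now $H$ stabilizes $\{n-1,n\}$, so $H$ embeds into $S_{n-2}\times S_2$ via its actions on $\{1,\dots,n-2\}$ and on $\{n-1,n\}$. I would apply Goursat's lemma (Lemma~\ref{lem:Goursat}): $H$ projects onto $\bar H\in\{A_{n-2},S_{n-2}\}$ in the first coordinate and onto some subgroup of $S_2$ in the second, and $H$ is the fiber product over a shared quotient of these two projections. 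The shared quotient is a common quotient of $\bar H$ and of a subgroup of $S_2\cong C_2$, hence is either trivial or $C_2$. If the shared quotient is trivial, $H$ is the direct product of its projections, yielding $A_{n-2}$, $S_{n-2}$, or $S_2\times S_{n-2}$ (the last when the first projection is $S_{n-2}$). If the shared quotient is $C_2$, then the first projection must be $S_{n-2}$ (since $A_{n-2}$ has no index-$2$ subgroup for $n-2\geq 5$, i.e.\ $A_{n-2}$ is simple and admits no $C_2$ quotient), and the fiber product $S_2\times_{C_2}S_{n-2}$ arises; alternatively the second projection could be trivial while the shared quotient is realized via $A_{n-2}\vartriangleleft S_{n-2}$, giving $S_2\times A_{n-2}$. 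Enumerating these cases yields exactly the listed groups $A_{n-2}$, $S_{n-2}$, $S_2\times S_{n-2}$, $S_2\times_{C_2}S_{n-2}$, and $S_2\times A_{n-2}$.

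The main obstacle is the careful bookkeeping in the Goursat analysis of case (2): one must correctly identify all shared quotients of $A_{n-2}$ or $S_{n-2}$ with a subgroup of $C_2$, and match each to the corresponding fiber product, taking care that $A_{n-2}$ is simple for $n-2\geq 5$ (guaranteed by $n\geq 8$) so that its only proper normal subgroup is trivial and its only nontrivial quotient-realizing normal subgroup inside $S_{n-2}$ is $A_{n-2}\vartriangleleft S_{n-2}$ itself. Case (1) is routine once Proposition~\ref{prop:groups} is in hand.
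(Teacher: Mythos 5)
Your proposal follows essentially the same route as the paper's proof: Proposition \ref{prop:groups} gives $D$-homogeneity on the moved points, the defining property of $D$ (with Livingstone--Wagner upgrading homogeneity to transitivity) forces the restricted image to contain the alternating group, and Goursat's lemma (Lemma \ref{lem:Goursat}) finishes case (2). The only structural difference is that the paper first disposes of the subcase where $H$ fixes the $2$-set $B$ pointwise (then $H=H_2$ is $A_{n-2}$ or $S_{n-2}$) and invokes Goursat only when the projection to $\Sym(B)\cong S_2$ is onto.

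There is, however, one bookkeeping slip in your Goursat analysis of case (2). You file $S_2\times A_{n-2}$ under the shared-quotient-$C_2$ case, saying ``the second projection could be trivial while the shared quotient is realized via $A_{n-2}\vartriangleleft S_{n-2}$.'' This is incoherent: a shared quotient is by definition a common quotient of the \emph{two projections}, so if the projection to $S_2$ is trivial, the shared quotient must be trivial and the fiber product is just $A_{n-2}$ or $S_{n-2}$. The group $A_{n-2}\times S_2$ actually belongs to your trivial-shared-quotient case, with first projection $A_{n-2}$ and second projection all of $S_2$ --- a combination you omitted from that enumeration. The correct tally is: trivial shared quotient gives $A_{n-2}$, $S_{n-2}$, $A_{n-2}\times S_2$, $S_{n-2}\times S_2$ (according to the two projections), while shared quotient $C_2$ forces the first projection to be $S_{n-2}$ (as $A_{n-2}$ is simple for $n\geq 7$, hence has no $C_2$ quotient) and yields only $S_2\times_{C_2}S_{n-2}$. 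With this refiling your final list is exactly the paper's, so the conclusion stands; only the intermediate case attribution needs repair.
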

\begin{proof}
If $H$ has a unique fixed point, Proposition \ref{prop:groups} implies that $H$ acts $D$-homogenously on a set $U_1$ of cardinality $n-1$. Hence by definition of $D$, the projection of $H$ to $\Sym(U_1)$ contains $A_{n-1}$ and thus $H=A_{n-1}$ or $S_{n-1}$. 


Similarly, if $H$ stabilizes a $2$-set $B$, then Proposition \ref{prop:groups} implies that $H$ acts $D$-homogenously  on the complement $U_2$ of $B$. By definition of $D$, this implies that the projection $H_2$ of $H$ to $\Sym(U_2)$ is either $A_{n-2}$ or $S_{n-2}$. 

If $H$ fixes $B$ pointwise, then $H=H_2$, and we are done. 
Henceforth assume the projection $H_1$ of $H$ to $\Sym(B) = S_2$ is onto. 
By Lemma \ref{lem:Goursat}, the group $H$ is a fiber product of projections of $H_1$ and $H_2$ onto a shared quotient. The only shared quotients of $S_{n-2}$ and $S_2$ are $\{e\}$ or $S_2$, and the only shared quotient of $A_{n-2}$ and $S_2$ is $\{e\}$. Therefore in this case $H$ is $ S_{n-2}\times S_2$, $A_{n-2}\times S_2$ or $S_{n-2}\times_{C_2} S_{2}$. 
\end{proof}

We shall use the following easy two lemmas in the proof of Proposition \ref{prop:groups}.

\begin{lem}\label{lem:basic properties of $k$-orbits}
	Let $n\geq 2$ and $H\leq S_n$. Let $X,Y$ be two disjoint $H$-invariant subsets.  Denote $N_1:=|X|$ and $N_2:=|Y|$, and assume $N_1\geq N_2\geq 1$. Let $k\leq N_1+N_2$.
	Then 
	\begin{enumerate}
		\item\label{props:General orbits sum} { The orbits of $H$ on $k$-sets of $X\cup Y$ can be counted as follows:
			\begin{equation*} 
			\orbs{(X\cup Y)}{k}=\sum_{i=0}^{N_2}\orbsS{X}{k-i}{Y}{i}.
			\end{equation*}
		}
		\item\label{props:orbits estimation} {
			For $i\leq N_1$, $j\leq N_2$, the number of orbits on $\uosS{X}{i}{Y}{j}$ is bounded by:
			\begin{equation*}
			\max(\orbs{X}{i}, \orbs{Y}{j}) \leq \orbsS{X}{i}{Y}{j} \leq \min(\orbs{X}{i}\cdot |\uos{Y}{j}|,|\uos{X}{i}|\cdot \orbs{Y}{j}).
			\end{equation*}
		}
	\end{enumerate}
\end{lem}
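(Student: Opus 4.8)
The plan is to prove both statements by re-coordinatizing the action of $H$ on subsets of $X\cup Y$ according to how a subset meets the two invariant blocks $X$ and $Y$. The one structural fact I will use repeatedly is that an $H$-equivariant surjection between $H$-sets induces a surjection on orbit sets, and that a disjoint union of $H$-invariant sets has orbit count equal to the sum of the orbit counts of its pieces.

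For part (1), the first observation is that, since $X$ and $Y$ are disjoint and $H$-invariant, every $g\in H$ preserves the intersection sizes $|S\cap X|$ and $|S\cap Y|$ of a subset $S\subseteq X\cup Y$. Hence I would partition the $H$-set $\uos{(X\cup Y)}{k}$ into the $H$-invariant pieces $\uosS{X}{k-i}{Y}{i}$, indexed by the number $i$ of elements lying in $Y$, giving $\uos{(X\cup Y)}{k}=\bigsqcup_{i=0}^{N_2}\uosS{X}{k-i}{Y}{i}$, where pieces with $k-i<0$ or $k-i>N_1$ are empty and so contribute $\orbsS{X}{k-i}{Y}{i}=0$. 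Counting orbits over this partition yields the stated identity.

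For part (2), I would identify $\uosS{X}{i}{Y}{j}$ with the Cartesian product $\uos{X}{i}\times\uos{Y}{j}$ via $S\mapsto(S\cap X,\,S\cap Y)$; because $X,Y$ are disjoint and $H$-invariant, this identification is $H$-equivariant for the diagonal action. The two coordinate projections are then $H$-equivariant and surjective, surjectivity using $j\leq N_2$ and $i\leq N_1$ so that both factors are nonempty. For the lower bound, the induced surjections on orbit sets give $\orbsS{X}{i}{Y}{j}\geq\orbs{X}{i}$ and, symmetrically, $\orbsS{X}{i}{Y}{j}\geq\orbs{Y}{j}$, hence the $\max$. For the upper bound, I would fix orbit representatives $A_1,\dots,A_r$ of the $H$-action on $\uos{X}{i}$ with $r=\orbs{X}{i}$; given any pair $(A,B)$, choosing $g\in H$ with $gA=A_s$ shows $(A,B)$ is $H$-equivalent to $(A_s,gB)$, so every orbit contains a representative among the $r\cdot|\uos{Y}{j}|$ pairs of the form $(A_s,B')$, whence $\orbsS{X}{i}{Y}{j}\leq\orbs{X}{i}\cdot|\uos{Y}{j}|$. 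The symmetric argument with the roles of $X$ and $Y$ exchanged gives the other term of the $\min$.

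The argument is essentially bookkeeping, so I do not expect a genuine obstacle. The only points that require care are the treatment of the out-of-range (empty) summands in part (1), which is what legitimizes writing the sum over the full range $0\leq i\leq N_2$, and the verification of surjectivity of the coordinate projections in part (2), which is precisely where the hypotheses $i\leq N_1$, $j\leq N_2$, and $N_2\geq 1$ are used.
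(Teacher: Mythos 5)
Your proposal is correct and follows essentially the same route as the paper: part (1) via the $H$-invariant partition of $\uos{(X\cup Y)}{k}$ by intersection size with $Y$, and part (2) via the identification of $\uosS{X}{i}{Y}{j}$ with pairs $(A,B)$, using equivariance of the projections for the lower bound and orbit representatives $A_1,\dots,A_r$ paired with arbitrary $B'\in\uos{Y}{j}$ for the upper bound. Your explicit treatment of the empty out-of-range summands in (1) and of where $j\leq N_2$ is needed for surjectivity are minor clarifications the paper leaves implicit.
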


\begin{proof}
Part \eqref{props:General orbits sum} follows by noting that the sets  $\uosS{X}{k-i}{Y}{i}$, $i=0,\ldots,N_2$ form an $H$-invariant partition of $\uos{(X\cup Y)}{k}$.

	
	
	For part \eqref{props:orbits estimation}, since $X$ and $Y$ are disjoint, we can view each orbit of $H$ on $\uosS{X}{i}{Y}{j}$ as an orbit of $H$ on the ordered tuples $(A,B)$ where $A$ is an $i$-subset of $X$ and $B$ is a $j$-subset of $Y$.  If $A_1,A_2\in \uos{X}{i}$ have distinct orbits under $H$, and $B\in \uos{Y}{j}$, then $H\cdot (A_1,B)$ and $H\cdot (A_2,B)$ are distinct orbits of $H$ on $\uosS{X}{i}{Y}{j}$. Hence $H$ has at least $\orbs{X}{i}$ orbits on $\uosS{X}{i}{Y}{j}$. By symmetry $\orbs{Y}{j}\leq \orbsS{X}{i}{Y}{j}$, giving the first inequality in \eqref{props:orbits estimation}. 
	
Since each orbit of $H$ on $\uosS{X}{i}{Y}{j}$ is of the form $H\cdot (A_1,B)$ for $A_1\in \uos{X}{i}$ and $B\in \uos{Y}{j}$,  we get that $\orbsS{X}{i}{Y}{j}\leq \orbs{X}{i}\cdot |\uos{Y}{j}|$. By symmetry  $\orbsS{X}{i}{Y}{j}\leq |\uos{X}{i}|\cdot \orbs{Y}{j}$, giving the second inequality in \eqref{props:orbits estimation}. 
\end{proof}
\begin{lem}\label{lem:2} 
Suppose $H$ acts transitively on two sets $A$ and $B$ of cardinalities $M\geq N$, respectively, and that the number of orbits of $H$ on $A\times B$ is $N$. Then there is an $H$-invariant partition of $A$ into $N$ blocks $A_1,\ldots,A_N$ so that the action of $H$ on the blocks is equivalent to its action on $B$. In particular, if $M>N$, the action of $H$ on $A$ is imprimitive, and hence it is not $2$-homogenous. 
\end{lem}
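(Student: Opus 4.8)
The plan is to build the required partition from a \emph{single} $H$-orbit on $A\times B$, and to use the hypothesis that there are exactly $N$ orbits to force each orbit to meet every ``column'' $\{a\}\times B$ in exactly one point. Once that is known, the orbit becomes the graph of a surjection $A\to B$, whose fibers are the desired blocks.

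First I would fix an arbitrary $H$-orbit $O\subseteq A\times B$ and record its two families of fibers: for $b\in B$ set $A_b:=\{a\in A:(a,b)\in O\}$, and for $a\in A$ set $B_a:=\{b\in B:(a,b)\in O\}$. Since $O$ is $H$-invariant, $h\cdot A_b=A_{hb}$ and $h\cdot B_a=B_{ha}$ for all $h\in H$. Because $H$ is transitive on $A$, the orbit $O$ projects onto $A$, so $\bigcup_{b\in B}A_b=A$ and, by transport of structure, all column fibers $B_a$ have a common cardinality; because $H$ is transitive on $B$, all row fibers $A_b$ likewise have a common cardinality.

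The one substantive step is to show that every column fiber is a single point. Here I would sum the column-fiber sizes over all orbits at the fixed base point $a_0\in A$: since the orbits partition $A\times B$, the sets $\{b:(a_0,b)\in O\}$ over the various orbits $O$ partition $B$, whence $\sum_{O}\abs{B_{a_0}}=\abs{B}=N$, the sum running over all $H$-orbits on $A\times B$. By hypothesis there are exactly $N$ orbits, and each has $\abs{B_{a_0}}\ge 1$ because $O$ projects onto $A$; therefore $\abs{B_{a_0}}=1$ for \emph{every} orbit, and transitivity of $H$ on $A$ upgrades this to $\abs{B_a}=1$ for all $a\in A$. Thus, in our fixed $O$, each $a\in A$ lies in exactly one $A_b$, so the sets $\{A_b\}_{b\in B}$ are pairwise disjoint and cover $A$; that is, they partition $A$ into blocks $A_1,\dots,A_N$.

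It then remains to read off the conclusion. The assignment $b\mapsto A_b$ is $H$-equivariant (as $hA_b=A_{hb}$) and injective (the $A_b$ are nonempty and pairwise disjoint), hence an isomorphism of $H$-sets from $B$ onto the block set; in particular the action of $H$ on the $N$ blocks is equivalent to its action on $B$, and each block has size $M/N$. For the final assertion, when $M>N$ each block has size $M/N>1$; provided $N\ge 2$ (so that $1<M/N<M$), as holds in the intended application, this is a proper block system, $H$ acts imprimitively on $A$, and since any $2$-homogeneous action is primitive, $H$ is not $2$-homogeneous on $A$. The main obstacle is precisely the counting in the middle step: recognizing that the \emph{exact} orbit count $N$ collapses all column fibers to points is what makes each orbit the graph of a partition of $A$ indexed by $B$; everything else is bookkeeping with $H$-invariance and transitivity.
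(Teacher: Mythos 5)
Your proof is correct and follows essentially the same route as the paper: both rest on the pigeonhole count that exactly $N$ orbits on $A\times B$, each projecting onto $A$, forces every orbit to be minimal --- the paper phrases this as each orbit having size exactly $M$ and then deduces $H_a\le H_b$ via orbit--stabilizer, while you count intersections with a single column $\{a_0\}\times B$ --- after which the block system is the same in both cases, namely the fibers of the resulting $H$-equivariant surjection $A\to B$ (alias the coset fibers of $H/H_a\to H/H_b$). One small improvement on your side: you explicitly note that imprimitivity needs $N\ge 2$ (for $N=1$ one gets only the trivial one-block partition even though $M>N$), an edge case the paper's phrase ``the inclusion $H_a\lneq H_b$ is proper, and hence this partition is nontrivial'' glosses over, though it is harmless since the application has $|A|>|B|\ge 2$.
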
	
\begin{proof}
Since $H$ acts transitively on $A$, every orbit on $A\times B$ is of length at least $M$. Since $|A\times B|=MN$, and $H$ has $N$ orbits on $A\times B$, all orbits are of length equal to $M$.
Thus, letting $H_a$ (resp.~$H_b$) denote the stabilizer of $a\in A$ (resp.~$b\in B$), 
 the index of the stabilizer  $H_a\cap H_b$ of $(a,b)$  is $M$. As $M=[H:H_a]$, we have $H_a\cap H_b = H_a$, that is, $H_b\supseteq H_a$. Identifying $A$ with $G/H_a$, the desired block system is then $G/H_b$. 
If $M>N$, the inclusion $H_a\lneq H_b$  is proper, and hence this partition is nontrivial. 
It is well known that a $2$-homogenous action is primitive. 
\end{proof}

%

%

%


Proposition \ref{prop:groups} is now a direct consequence of the following two lemmas: 
\begin{lem}\label{lem:point} Let $n\geq 7$. Suppose $H\leq S_n$ fixes pointwise a set $B\subseteq\{1,\ldots,n\}$ of cardinality $r\geq 1$, and let $A:=\{1,\ldots,n\}\setminus B$. Suppose that $ d \leq \frac{n-r}{2}$ is an integer such that one of the following holds:
\begin{enumerate}
\item $O_{r}(H) = \ldots = O_{d}(H)$ with $d > r$. 
\item $O_{r+1}(H) = \ldots = O_d(H)$ with $d > r+1$ and $H$ has no fixed points in $A$. 
\end{enumerate}
Then $H$ acts $d$-homogenously on $A$. 
\end{lem}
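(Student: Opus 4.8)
The plan is to prove this by combinatorial bookkeeping on orbit counts, reducing the $d$-homogeneity of $H$ on $A$ to the hypothesis that the orbit counts $O_i(H)$ stabilize. Recall that $H$ fixes $B$ pointwise and acts on $A = \{1,\ldots,n\}\setminus B$ with $|B| = r$ and $|A| = n-r$. I would first translate the homogeneity condition into orbit counts: $H$ acts $d$-homogeneously on $A$ precisely when $\orbs{A}{d} = 1$. So the target is to show $\orbs{A}{k} = 1$ for all $k \le d$, or at least for $k = d$, using the chain of equalities $O_i(H) = O_{i+1}(H)$ among orbit counts on $k$-sets of the \emph{full} set $\{1,\ldots,n\}$.

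The key tool is \Lref{lem:basic properties of $k$-orbits}\eqref{props:General orbits sum}, which decomposes orbits on $k$-sets of $A \cup B$ as $O_k(H) = \orbs{(A\cup B)}{k} = \sum_{i=0}^{r}\orbsS{A}{k-i}{B}{i}$. Because $H$ fixes $B$ pointwise, each $i$-subset of $B$ is itself a singleton orbit, so $\orbsS{A}{k-i}{B}{i} = \binom{r}{i}\cdot\orbs{A}{k-i}$. This gives the clean formula
\begin{equation*}
O_k(H) = \sum_{i=0}^{r}\binom{r}{i}\,\orbs{A}{k-i}.
\end{equation*}
The strategy is then to feed the equalities $O_k(H) = O_{k+1}(H)$ (from the hypothesis, in the appropriate range) into this identity and extract information about the increments $\orbs{A}{k+1-i} - \orbs{A}{k-i}$. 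Since by Livingstone--Wagner the sequence $\orbs{A}{j}$ is nondecreasing for $j \le (n-r)/2$, the equality $O_{k+1}(H) = O_k(H)$ forces a weighted sum of nonnegative increments to vanish, hence each increment vanishes in the relevant range; this should propagate down to show $\orbs{A}{k} = 1$ for the $k$ up to $d$. In case (1), the base of the induction is supplied by $O_r(H) = O_{r+1}(H)$ together with $\orbs{A}{0} = 1$; in case (2), where $H$ has no fixed point in $A$ so that $\orbs{A}{1} = 1$ already holds, one starts from $O_{r+1}(H) = O_{r+2}(H)$ instead, which is why the index range shifts.

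Concretely, I would set up an induction on $k$ showing $\orbs{A}{k} = 1$. The difference $O_{k+1}(H) - O_k(H) = \sum_{i=0}^{r}\binom{r}{i}\bigl(\orbs{A}{k+1-i} - \orbs{A}{k-i}\bigr)$ is a sum of nonnegative terms (each increment being $\geq 0$ by Livingstone--Wagner, valid since the relevant indices stay $\leq d \leq (n-r)/2$). When this difference is $0$ by hypothesis, every increment $\orbs{A}{k+1-i} - \orbs{A}{k-i}$ with $0 \le i \le r$ must vanish. Taking $i$ ranging appropriately and combining with the inductive hypothesis that $\orbs{A}{j} = 1$ for smaller $j$, one concludes $\orbs{A}{k+1} = \orbs{A}{k} = \cdots = 1$. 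The main obstacle I anticipate is controlling the boundary/indexing carefully: the formula $O_k = \sum_i \binom{r}{i}\orbs{A}{k-i}$ only involves terms with $0 \le k-i \le |A|$, so near the extremes the binomial sum has fewer terms, and I must verify that the telescoping still pins down the top increment rather than leaving a nonnegative combination that could cancel ambiguously. The two cases' differing lower index ($r$ versus $r+1$) and the role of the fixed-point hypothesis in case (2) are exactly what resolve this boundary issue, so the delicate part is matching the available equalities $O_i = O_{i+1}$ to precisely the increments needed to drive the induction up to $\orbs{A}{d} = 1$.
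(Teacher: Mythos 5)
Your overall strategy coincides with the paper's: the same decomposition $O_k(H)=\sum_{i=0}^{r}\binom{r}{i}\orbs{A}{k-i}$ (obtained from part (1) of Lemma \ref{lem:basic properties of $k$-orbits} plus the fact that $B$ is fixed pointwise), followed by a term-by-term comparison of consecutive equations, where Livingstone--Wagner monotonicity forces each nonnegative increment in a vanishing sum to be zero. Your base step in case (1) is also correct: comparing $O_r$ with $O_{r+1}$, the $i=r$ term gives $\orbs{A}{1}=\orbs{A}{0}=1$, and the equalities then propagate upward.

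However, there is a genuine gap in your case (2). You assert that since $H$ has no fixed point in $A$, ``$\orbs{A}{1}=1$ already holds.'' This is false: $\orbs{A}{1}$ is the number of $H$-orbits on $A$, and absence of fixed points only says every orbit has cardinality at least $2$, not that there is a single orbit (e.g.\ $\langle(1\,2),(3\,4)\rangle$ acting on $\{1,2,3,4\}$ has no fixed points and two orbits). Establishing transitivity on $A$ is precisely the difficulty of case (2); without it, your telescoping argument only yields $\orbs{A}{1}=\orbs{A}{2}=\cdots=\orbs{A}{d}=s$, where $s$ is the number of $H$-orbits on $A$, and nothing in your proposal forces $s=1$. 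The paper closes this gap with a separate counting step: writing $A_1,\ldots,A_s$ for the orbits of $H$ on $A$, the no-fixed-point hypothesis gives $|A_i|\geq 2$, hence $\orbs{A_i}{2}\geq 1$ for every $i$, so that
\[
\orbs{A}{2}=\sum_{i=1}^s \orbs{A_i}{2} + \sum_{i<j}\orbsS{A_i}{1}{A_j}{1}\;\geq\; s+\binom{s}{2},
\]
which contradicts the derived equality $\orbs{A}{2}=\orbs{A}{1}=s$ unless $s=1$. Note this is exactly where the hypothesis ``no fixed points in $A$'' enters: if some orbit were a singleton, the inequality would weaken to $\orbs{A}{2}\geq (s-1)+\binom{s}{2}$ and the contradiction at $s=2$ would disappear. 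Your proof needs this step (or an equivalent one) to go through.
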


\begin{proof}
Set $O_k:=O_k(H)$ for  $1\leq k\leq n/2$. 
Since $B$ is fixed pointwise by $H$, we have  $\orbsS{A}{j}{B}{i}=\orbs{A}{j}\cdot \orbs{B}{i}={r \choose i} \cdot \orbs{A}{j}$ for $1\leq i \leq r$.
	Along with part \eqref{props:General orbits sum} of Lemma \ref{lem:basic properties of $k$-orbits} this gives (note that $d>r$): 
	\begin{equation}\label{equ:exp}
	\begin{array}{clll}
	O_r & =\orbs{A}{r} & +\cdots + {r\choose i}\cdot \orbs{A}{r-i} & +\cdots + r \orbs{A}{1} + 1 \\
	O_{r+1} & =\orbs{A}{r+1} & +\cdots + {r\choose i}\cdot \orbs{A}{r+1-i} & +\cdots +  r \orbs{A}{2} + \orbs{A}{1} \\
	&  & \ddots   \\
	O_{d} & =\orbs{A}{d} & +\cdots + {r\choose i}\cdot \orbs{A}{d-i} & + \cdots +r\orbs{A}{d-r+1} + \orbs{A}{d-r}.
	\end{array}
	\end{equation}
By the Livingstone--Wagner theorem,
	 $\orbs{A}{i}\leq \orbs{A}{i+1}$ for $i=1,\dots ,\lfloor  \frac{n-r}{2} \rfloor -1$. Comparing the $i$-th terms of the equalities for $O_{r+1},...,O_d$ in \eqref{equ:exp} for each $i=0,\ldots,r$, gives:
	\begin{equation}\label{equ: O_1=O_D for case I}
	\orbs{A}{1}=\orbs{A}{2}=\dots=\orbs{A}{d}.
	\end{equation}	
	If $O_r=O_{r+1}$, we get by the same comparison that $1=\orbs{A}{1}$. This gives the claim for case (1).
	Otherwise, assume that $A$ contains no fixed points. 
Let $A_1,\dots,A_s$ be the orbits of $H$ on $A$.  
Since $H$ fixes no points in $A$, we have $|A_i|\geq 2$ and thus $\orbs{A_i}{2}\neq 0$ for $i=1,\ldots,s$. Part \ref{props:General orbits sum} of Lemma \ref{lem:basic properties of $k$-orbits} yields:

\begin{equation*}\label{equ:case1}
	\orbs{A}{2}=\sum_{i=1}^s \orbs{A_i}{2} + \sum _{i<j} \orbsS{A_i}{1}{A_j}{1}\geq  s+ {s \choose 2}.
\end{equation*}
If $s>1$, this gives $\orbs{A}{2}>s$, contradicting the equality $\orbs{A}{2}=\orbs{A}{1}=s$ given by \eqref{equ: O_1=O_D for case I}. 
 Thus $s=1$, and the claim follows from \eqref{equ: O_1=O_D for case I}. 
\end{proof}

\begin{lem}\label{lem:2-set}
Suppose $\{n-1,n\}$ is an orbit of $H\leq S_n$ for $n\geq 8$,
and $O_2(H) = \ldots = O_d(H)$ for an integer $3\leq d \leq\frac{n-2}{2}$. 
Then $H$ is $d$-homogenous on $\{1,\ldots,n-2\}$.
\end{lem}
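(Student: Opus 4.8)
The plan is to split the count $O_k(H)$ according to how a $k$-set meets the block $B:=\{n-1,n\}$, and then run a Livingstone--Wagner monotonicity argument. Write $A:=\{1,\dots,n-2\}$; it is $H$-invariant because $B$ is an orbit. Put $a_k:=\orbs{A}{k}$. Since $H$ is transitive on the $2$-set $B$, the point-stabilizer $H_n:=\{h\in H:h(n)=n\}$ is the pointwise stabilizer of $B$ and has index $2$ in $H$; let $a_k^+$ be the number of $H_n$-orbits on $k$-subsets of $A$. First I would prove the identity
\[ O_k(H)=a_k+a_{k-1}^++a_{k-2}\qquad(2\le k\le d). \]
By part \ref{props:General orbits sum} of Lemma \ref{lem:basic properties of $k$-orbits}, a $k$-set meets $B$ in $0$, $1$ or $2$ points, contributing $\orbs{A}{k}=a_k$, $\orbsS{A}{k-1}{B}{1}$ and $\orbs{A}{k-2}=a_{k-2}$ (the last since $B$ is fixed setwise). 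For the middle term, orbits on $\uosS{A}{k-1}{B}{1}$ are orbits of $H$ on pairs $(S,b)$ with $S$ a $(k-1)$-subset of $A$ and $b\in B$; as $H$ is transitive on $B$, these correspond bijectively to $H_n$-orbits on $(k-1)$-subsets of $A$, so the middle term equals $a_{k-1}^+$.

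Next I would feed in the hypothesis $O_2(H)=\dots=O_d(H)$. Subtracting consecutive instances of the identity gives, for $3\le k\le d$,
\[ 0=O_k(H)-O_{k-1}(H)=(a_k-a_{k-1})+(a_{k-1}^+-a_{k-2}^+)+(a_{k-2}-a_{k-3}). \]
The Livingstone--Wagner theorem, applied to the actions of both $H$ and $H_n$ on the $(n-2)$-set $A$, makes each parenthesised difference nonnegative (the bound $d\le(n-2)/2$ keeps all indices in the range where the theorem gives monotonicity), so each difference vanishes. In particular $a_2=a_3=\dots=a_d$, and the $k=3$ equation also yields $a_1=a_0=1$ (so $H$ is transitive on $A$) together with $a_1^+=a_2^+$.

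It now suffices to show $a_2=1$, since then $a_d=a_2=1$ is precisely $d$-homogeneity of $H$ on $A$. I would obtain this by showing $H_n$ is already transitive on $A$, i.e., $a_1^+=1$. As $H$ is transitive on $A$ and $[H:H_n]=2$, the group $H_n$ has at most two orbits on $A$; were there exactly two, they would be interchanged by $H/H_n\cong C_2$ and hence both of size $(n-2)/2\ge 3$ (using $n\ge 8$). Decomposing the $2$-subsets of $A$ into those inside each orbit and those crossing between them, part \ref{props:General orbits sum} of Lemma \ref{lem:basic properties of $k$-orbits} then gives $a_2^+\ge 3$, contradicting $a_1^+=a_2^+\le 2$. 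Hence $a_1^+=1$, so $a_2^+=a_1^+=1$, and therefore $a_2\le a_2^+=1$ (since $H_n\le H$ forces $a_2\le a_2^+$). Thus $a_2=1$ and $a_d=1$, as required.

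The step I expect to be the crux is the last one. The telescoping by itself only shows that the chain $a_2=\dots=a_d$ is constant but says nothing about its common value, and in the borderline case $d=3$ there is no index overlap forcing $a_2=1$ from below. The extra equality $a_1^+=a_2^+$, available already from $O_2=O_3$, is what lets the block-counting contradiction pin the value to $1$; here the hypothesis $n\ge 8$ is used precisely to guarantee that two equal $H_n$-orbits would each be large enough to contain a $2$-subset, which is what drives $a_2^+\ge 3$.
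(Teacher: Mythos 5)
Your proof is correct, and while it starts from the same decomposition as the paper (part \eqref{props:General orbits sum} of Lemma \ref{lem:basic properties of $k$-orbits}, splitting $k$-sets by their intersection with $B$) and the same telescoping-with-Livingstone--Wagner strategy, its key mechanism is genuinely different. The paper treats the cross terms $\orbsS{A}{k-1}{B}{1}$ as abstract quantities: it first invokes the Bundy--Hart equality case of Livingstone--Wagner \cite{BH} to get transitivity of $H$ on $A$, deduces from telescoping only that the chain $\orbsS{A}{k}{B}{1}$ is non-increasing and bounded by $2$, and then kills the case $\orbsS{A}{1}{B}{1}=2$ via the imprimitivity Lemma \ref{lem:2} together with an explicit count showing $O_3\geq 6>5=O_2$. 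You instead identify $\orbsS{A}{k-1}{B}{1}$ with the orbit count $a_{k-1}^+$ of the index-$2$ point stabilizer $H_n$ on $(k-1)$-subsets of $A$ (valid since $H$ is transitive on $B$), which lets you apply Livingstone--Wagner to $H_n$ as well; then all three summands in each telescoped difference are nonnegative and hence vanish. This buys you two simplifications: transitivity of $H$ on $A$ falls out for free from the term $a_1-a_0$ (no appeal to \cite{BH}, which the paper's footnote acknowledges is a shortcut around a longer direct argument), and the residual bad case $a_1^+=2$ is eliminated by the elementary observation that the two $H_n$-orbits would be swapped by $H/H_n\cong C_2$, each of size $(n-2)/2\geq 3$, forcing $a_2^+\geq 3$ against $a_1^+=a_2^+\leq 2$ --- no use of Lemma \ref{lem:2}. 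You also correctly flag that the only delicate case is $d=3$ (for $d\geq 4$ the telescoping already yields $a_2=a_1=1$), which is exactly where your extra equality $a_1^+=a_2^+$ does the work. The net effect is a more self-contained proof; the paper's route, by contrast, factors the difficulty through the reusable general-purpose Lemma \ref{lem:2}.
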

\begin{proof}	
Set $A:=\{1,\dots n-2\}$, $B:=\{n-1, n\}$, and $O_k:=O_k(H)$ for  $2\leq k\leq d-1$. 
As $O_2=O_3$ and $n-2>2$, \cite[Lemma 2.1]{BH} implies\footnote{A direct proof which does not rely on \cite{BH} appears in \cite{Mon}.} that $\orbs{A}{1}=1$.
	By part \eqref{props:General orbits sum} of Lemma \ref{lem:basic properties of $k$-orbits}: 
	\begin{equation}\label{equ:orbits sum}
	O_k= \orbs{A}{k} + \orbsS{A}{k-1}{B}{1} +\orbsS{A}{k-2}{B}{2}\text{ for $k\geq 2$}.
	\end{equation}

Since $\orbsS{A}{j}{B}{2}=\orbs{A}{j}$ for $j\geq 0$, \eqref{equ:orbits sum} for $k$ and $k+1$ gives:
	\begin{equation}\label{equ:k,k+1}
	\begin{array}{rl}
	    	O_k & = \orbs{A}{k} + \orbsS{A}{k-1}{B}{1} +\orbs{A}{k-2}, \\
	    	O_{k+1} & = \orbs{A}{k+1} + \orbsS{A}{k}{B}{1} +\orbs{A}{k-1},
	\end{array}
	\end{equation}
	where $\orbs{A}{0}=1$.	
	Since $O_k=O_{k+1}$ for $2\leq k \leq d-1$ by assumption, and since
	$ \orbs{A}{k}\leq \orbs{A}{k+1}$ and $\orbs{A}{k-2}\leq \orbs{A}{k-1}$ by the Livingstone-Wagner theorem, 
	\eqref{equ:k,k+1} gives
	\begin{equation}\label{equ:AkB1 bound}
	    \orbsS{A}{d-1}{B}{1}\leq \orbsS{A}{d-2}{B}{1}\leq \dots \leq \orbsS{A}{1}{B}{1}.
	\end{equation}
	Note that by part \eqref{props:orbits estimation} of Lemma \ref{lem:basic properties of $k$-orbits}:
	\begin{equation}\label{equ:AkB1 bound2}
	    \orbs{A}{k}\leq \orbsS{A}{k}{B}{1},
    \end{equation}
for $1\leq k \leq d-1$, and hence in combination with \eqref{equ:AkB1 bound}: 
\begin{equation}\label{equ:A1B1} \orbs{A}{k}\leq \orbsS{A}{1}{B}{1}.
\end{equation}
By part \eqref{props:orbits estimation} of Lemma \ref{lem:basic properties of $k$-orbits},
	$\orbsS{A}{1}{B}{1}\leq |B|\cdot \orbs{A}{1}\leq 2$. 

	Assume first that $\orbsS{A}{1}{B}{1}=2$. Since $A$ and $B$ are both orbits of $H$ with $|A|>|B|\geq 2$, Lemma \ref{lem:2} implies that there is an $H$-invariant partition $A=A_1\cup A_2$ such that the permutation action of $H$ on the blocks is equivalent to its action on $B$ and in particular $H$ does not act 2-homogenously on $A$. Therefore  $\orbs{A}{2}=2$ by\eqref{equ:A1B1},  and  $O_2 = 5$ by  \eqref{equ:k,k+1}.	
On the other hand, we claim that $O_3>5$, contradiciting $O_3=O_2$.  
Letting $\alpha_1,\alpha_2\in A_1$,  $\beta_1,\beta_2\in A_2$, and $b\in B$,  the sets $\{\alpha_1,\alpha_2,b\}, \{\alpha_1,\beta_1,b\}, \{\beta_1,\beta_2,b\}$ lie in different orbits of $H$ since the action on $\{A_1,A_2\}$ is equivalent to that on $B$. Thus,  $\orbsS{A}{2}{B}{1}\geq 3$. Since in addition $\orbs{A}{3}\geq \orbs{A}{2}=2$, and $\orbsS{A}{1}{B}{2} = \orbs{A}{1}=1$, we have $O_3 \geq 6$ by \eqref{equ:k,k+1}, proving the claim.

	Henceforth assume $\orbsS{A}{1}{B}{1}=1$. Thus, \eqref{equ:AkB1 bound} implies that $\orbsS{A}{k}{B}{1} = 1$ for $1\leq k\leq d-1$. 
Plugging these equalities into  $\eqref{equ:k,k+1}$ and recalling that $O_2 = \cdots = O_d$ and $\orbs{A}{d}\geq\cdots\geq \orbs{A}{1}=1$, we see that $\orbs{A}{d} = \orbs{A}{d-1} = \cdots = \orbs{A}{1}=1$, as desired. 
\end{proof}	

\begin{proof}[Proof of Proposition \ref{prop:groups}]
Part (1) is given by Lemma \ref{lem:point} with $r=1$.
Part (2) is given by Lemma \ref{lem:point} with $r=2$ in the case that $B=\{n-1,n\}$ is fixed pointwise by $H$, and by Lemma \ref{lem:2-set} in the case that $B$ is an orbit of $H$.
\end{proof}

\section{The ramification types for $A_{n-2}\leq G\leq S_{n-2}\times S_2$}
\label{section:ram criteria for fixed fields}
Let $k$ be an algebraically closed field of characteristic $0$, and $\Omega/k(t)$ a Galois extension with group $A_n$ or $S_n$. The combination of Theorem \ref{thm:NZ orbits condition} and Corollary \ref{cor:groups} gives the possibilities for groups $H\leq S_n$ with fixed field $\Omega^H$ of genus $0$ or $1$. 
The following proposition gives the possible ramification types for each such $H$. 
For $g\geq 0$, letting $N_g$ denote the constant introduced in Remark \ref{rem:same-const}, 
we recall that if $n>N_g$ and $H\leq S_{n-2}\times S_2$ has fixed field of genus $\leq g$, then 
the ramification type of $\Omega^{G\cap S_{n-1}}/k(t)$ is listed in  \cite[Table 4.1]{NZ} (or alternatively in \cite[Section 3]{GS}). 
\begin{prop}\label{prop:ramification}
	Fix $g\geq 0$, and let $\Omega/k(t)$ be a Galois extension with Galois group  $G=A_n$ or $S_n$ with  $n\geq 13$. Let $F$ be the subfield of $\Omega$ fixed by  $G\cap S_{n-1}$, 
	 and assume that the ramification type $\E$ of $F/k(t)$ is listed in \cite[Table 4.1]{NZ}. 	
	Then there exist  constants $c>0$ and $d\geq 0$ satisfying the following. If $G=A_n$:
	\begin{enumerate}
		\item \label{case:stabilizersA_n}The fields fixed by stabilizers of points or $2$-sets, i.e., $S_{n-2}\times_{C_2} S_2$ and $A_{n-1}$, are of genus $0$.
		\item The genus of the field fixed by $A_{n-2}$ is at least $\max\{2,cn-d\}$. 
	\end{enumerate}
	
	If $G=S_n$: 
	\begin{enumerate}
		\item \label{case:stabilizersS_n} The fields fixed by stabilizers of points or of $2$-sets, that is, by $S_{n-1}$ or $S_{n-2}\times S_2$, are of genus $0$.
		
		\item\label{case2-point stabilizer} The genus of the field fixed by $S_{n-2}$ is  either  $0$ or at least $\max\{2,cn-d\}$.  It is of genus $0$ if and only if $$\E=[n], [a,n-a], [2,1^{n-2}]\,\,\,\text{ (Table \ref{table:NZ1}, Type (I1.1))}.$$ 
\item\label{caseS_2A_n-2}  The field fixed by $A_{n-2}\times S_2$ is of genus $0$ (resp.~$1$) if and only if  $\E$ is in Table \ref{table:A_n-2S_2genus0 cases} (resp.~Table \ref{table:A_n-2S_2 genus 1cases}). If the genus of $\Omega^{A_{n-2}\times S_2}$ is more than $1$, then it is also at least $cn-d$. 
\begin{table}
	\caption{
		\label{table:A_n-2S_2genus0 cases}
		Entries of \cite[Table 4.1]{NZ} in which the subfield fixed by $A_{n-2}\times S_2$ is of genus $0$. Here $1\leq a\leq n-1$ is an integer coprime to $n$.}
	\begin{equation*}
	\begin{array}{|l|l|l|}
	\hline
	I2.11 & {[a,n-a],\left[2^{n/2}\right], [1^2,2^{(n-6)/2},4]  } 
	& n\equiv 2 \mod4
	\\
	I2.13 & {[a,n-a],\left[1^2,2^{(n-2)/2}\right], [2^{(n-4)/2},4] } 
	& n\equiv 0 \mod 4
	\\
	I2.15 & {[a,n-a],\left[2^{n/2}\right], [1,2^{(n-4)/2},3]  } 
	& n\equiv 2 \mod 4\\
	\hline
	
	F4.3 & {\left[1,2^{(n-1)/2}\right], [1,3^{(n-1)/3}], [3,4,6^{(n-7)/6}] } & 
	n\equiv 7 \mod 12
	\\
	
	F4.5 & {\left[1^2,2^{(n-2)/2}\right], [1,3^{(n-1)/3}], [4,6^{(n-4)/6}] } & n\equiv 4 \mod 12\\
	\hline
	\end{array}
	\end{equation*}
\end{table}
		\begin{table}
	\caption{
		\label{table:A_n-2S_2 genus 1cases}
		Entries of  \cite[Table 4.1]{NZ} in which the subfield fixed by $A_{n-2}\times S_2$ is of genus $1$.}
	\begin{equation*}
	\begin{array}{| l | l | l |}
	\hline
	I2.3 & [n],[1^3,2^{\frac{n-3}{2}}],[2^{\frac{n-3}{2}},3]  & n\equiv 1\mod 4\\
	
	I2.5 & {[n], \left[1,2^{(n-1)/2}\right], [1^2,2^{(n-5)/2},3]} &  n\equiv 3 \mod 4\\
	I2.6 & {[n], \left[1^3,2^{(n-3)/2}\right], [1,2^{(n-5)/2},4] } & 
	n\equiv 1 \mod 4
	\\
	I2.8 & {[n], \left[1,2^{(n-1)/2}\right], [1^3,2^{(n-7)/2},4]} &  n\equiv 3 \mod 4\\
	
	\hline
	F1.5 & {\left[1^2,2^{(n-5)/2},3\right], [1,2^{(n-1)/2}] \text{ thrice} } &  
	n\equiv 3 \mod 4
	\\
	
	F1.8 & {\left[1^3,2^{(n-7)/2},4\right], [1,2^{(n-1)/2}] \text{ thrice} } &  n \equiv 3 \mod 4 \\
	F1.9 & \left[2^{(n-4)/2},4\right], [1^2, 2^{(n-2)/2}]\text{ thrice}; & 	 n\equiv 0 \mod 4 \\
	\hline
	\end{array}
	\end{equation*}
\end{table}

		\item \label{caseS_2times_C_2S_n-2} The field fixed by $S_{n-2}\times_{C_2} S_2$ is of genus $0$ if and only if $$\E=[1,2^{\frac{n-1}{2}}],[1,4^{\frac{n-1}{4}}],[2,3,4^{\frac{n-5}{4}}]$$ for $n\equiv 5\mod 8$ (Type (F3.2) in \cite[Table 4.1]{NZ}). If it is not of genus $0$, it is of genus at least $\max\{2,cn-d\}$. 
	\end{enumerate}
\end{prop}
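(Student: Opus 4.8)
The plan is to obtain every genus from a single Riemann--Hurwitz computation, using the orbit form \eqref{equ:RH-orbits} together with the observation that each subgroup $H$ in the statement lies with index $2$ inside the corresponding point or $2$-set stabilizer $H'$, whose fixed field has genus $0$.

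First I would settle the point- and $2$-set-stabilizer assertions (item~(1) in both lists). Here $G/H'$ is the set of points, respectively of $2$-sets, and the cycle type of each branch cycle $x_P$ on $\{1,\dots,n\}$ is the corresponding entry of the ramification type $\E$. The orbit numbers $\#\gOrbs{\sg{x_P}}{G/H'}$ are read off from the cycle lengths --- a point-orbit count is the number of cycles, and a $2$-set-orbit count is $\sum_i \floor{\ell_i/2}+\sum_{i<j}\gcd(\ell_i,\ell_j)$ over the cycle lengths $\ell_i$ --- and plugging into \eqref{equ:RH-orbits} with $g_{k(t)}=0$ gives genus $0$ for every entry of \cite[Table 4.1]{NZ}. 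In particular $g_1=g_2=0$ throughout the table, which is what I will use below.

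Now fix one of the four remaining subgroups: $S_{n-2}$, $A_{n-2}\times S_2$, $S_{n-2}\times_{C_2}S_2$ inside $S_n$, or $A_{n-2}$ inside $A_n$. In each case $\Omega^H/\Omega^{H'}$ is a quadratic extension whose base $\Omega^{H'}$ (the $2$-set stabilizer field) has genus $0$ by the previous step, so Riemann--Hurwitz for this degree-$2$ map reads $g_{\Omega^H}=B/2-1$, where $B$ is the (necessarily even) number of places of $\Omega^{H'}$ that ramify. Thus $g_{\Omega^H}$ is $0,1$ or $\geq 2$ according as $B=2,4$ or $B\geq 6$. A place of $\Omega^{H'}$ over a branch point $P$ is an orbit of $x_P$ on $2$-sets, and it ramifies exactly when $x_P^{L}$ interchanges the two sheets of the cover, where $L$ is the length of that orbit. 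Identifying the sheets with a decoration of the $2$-set --- an ordering of the pair for $S_{n-2}$ and $A_{n-2}$, an orientation of the complement for $A_{n-2}\times S_2$, and the product of these two orientations for $S_{n-2}\times_{C_2}S_2$ --- turns $B$ into an elementary, cycle-type-dependent count. For ordered pairs the orbit ramifies iff $x_P^{L}$ transposes the pair, which occurs only at the unique diameter orbit (length $\ell/2$) of each even cycle, so $B$ counts the even cycles of the $x_P$. For $S_{n-2}\times_{C_2}S_2$ the two local sign changes multiply to $\sgn{x_P}^{L}$, so the orbit ramifies iff $x_P$ is odd and $L$ is odd. For $A_{n-2}\times S_2$ the orbit ramifies iff $x_P^{L}$ is odd on the complement, which equals $\sgn{x_P}^{L}$ times the sign of $x_P^L$ on the pair; the two signed covers therefore agree except on the diameter orbits of even cycles.

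To finish I would tabulate $B$ across the (at most fifteen) entries of \cite[Table 4.1]{NZ} for each $H$, using the standard description of $2$-set orbits --- length-$\ell$ and length-$\ell/2$ orbits inside an $\ell$-cycle, and $\gcd(\ell_i,\ell_j)$ cross orbits of length $\lcm(\ell_i,\ell_j)$ --- together with their length parities. Reading off $B=2,4,\geq 6$ then yields the characterizations by Type (I1.1) and Type (F3.2) and the lists in Tables \ref{table:A_n-2S_2genus0 cases} and \ref{table:A_n-2S_2 genus 1cases}. For the linear lower bound, note that outside these few genus-$0$/genus-$1$ entries every relevant branch cycle carries $\Theta(n)$ cycles of the parity contributing to $B$ (for instance $\sim n/2$ transpositions or $\sim n/4$ four-cycles), so $B\geq cn-d$ for suitable constants; since the genus is then an integer exceeding $1$, one gets $g_{\Omega^H}\geq\max\{2,cn-d\}$. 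For $G=A_n$ all branch cycles are even, so only the $A_n$-type entries of the table occur and each already forces $B$ linear --- this is why $A_{n-2}$ admits no genus-$0$ exception. The main obstacle is precisely the parity bookkeeping for the two signed covers: whether a given $2$-set orbit ramifies depends on how $\sgn{x_P}^{L}$ interacts with the local action on the pair, and since $A_{n-2}\times S_2$ and $S_{n-2}\times_{C_2}S_2$ differ exactly on the diameter orbits of even cycles, the orbit lengths and their parities must be tracked entry by entry.
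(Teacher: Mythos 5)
Your proposal is correct, and it reaches the statement by a genuinely different route than the paper. The paper's proof is an algorithm, executed in Magma (with a computer-free version cited to \cite{Mon}): it reads off the parity data of $\E$, computes the genus of $\Omega^{A_n}$, transfers $\E$ via Abhyankar's lemma to the ramification type $\E'$ of $\Omega^{A_{n-1}}/\Omega^{A_n}$, computes the $2$-set stabilizer ramification by a cycle-type procedure, gets the genera of $\Omega^{S_{n-2}}$ and $\Omega^{A_{n-2}}$ by counting even entries of $\E$ and $\E'$, and---the key device---obtains the genus of $\Omega^{A_{n-2}\times S_2}$ with no ramification count at all, via Accola's formula $g_3=g'-g_1-g_2+2g_0$ for the biquadratic extension $\Omega^{A_{n-2}}/\Omega^{S_{n-2}\times S_2}$. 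You instead treat all four groups uniformly as index-$2$ subgroups $H\lneq H'$ of a $2$-set stabilizer whose fixed field has genus $0$, so that $g_{\Omega^H}=B/2-1$ with $B$ the number of sheet-swapping orbits of the branch cycles, and you identify the swapping criterion correctly in each case: diameter orbits of even cycles for the ordered-pair covers $S_{n-2}$ and $A_{n-2}$ (recovering the paper's ``count even entries'' rule), odd-length orbits of odd branch cycles for $S_{n-2}\times_{C_2}S_2$, and the product of the two signs for $A_{n-2}\times S_2$, which differs from the preceding criterion exactly on diameter orbits. I checked your rule on sample entries: it gives $B=2$, hence genus $0$, for type (I2.11), and $B=4$, hence genus $1$, for type (I2.3), as the tables require; and your remark that for $G=A_n$ all branch cycles are even, forcing linearly many ramified diameter orbits, is the correct structural reason why $A_{n-2}$ admits no low-genus exception. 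The trade-off: the paper's procedures are purely mechanical multiset manipulations suited to machine verification, and Accola's formula sidesteps precisely the sign-on-the-complement bookkeeping that is the delicate point of your argument; your route is more elementary and self-contained (no Accola, no passage through $\Omega^{A_n}$), and handles all four covers by one formula. Both proofs terminate in the same way---a finite, entry-by-entry tabulation over \cite[Table 4.1]{NZ}, including the uniform constants $c,d$, which in your setup come from the $\Theta(n)$ count of cycles of the relevant parity in every non-exceptional entry---and since the paper delegates that tabulation to Magma, your deferring it to a hand computation is not a gap by the paper's own standard.
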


\begin{proof}
We use Magma to carry out the following algorithm on $\E$. A computer free proof appears in \cite{Mon}. \\

{\bf Notation and assumptions:} View $\E$ as a set of conjugacy classes of $S_n$ or $A_n$ (i.e., partitions of $n$), each corresponding to (the conjugacy class of a decomposition group of) a single place $P$ of $k(t)$. Denote $\E=\{ \E_P\}$ where $P$ runs over the ramified places of $k(t)$. By  \cite[\S 4]{NZ},  the fields fixed by $1$-point and $2$-set stabilizers for ramification types appearing in  \cite[Table 4.1]{NZ} are of genus $0$. This gives Case \ref{case:stabilizersA_n} for $G=A_n$ and $G=S_n$. 
\\
{\bf Algorithm:} 
\\
\noindent\textbf{Step 0:} {\it Determine if $G$ is alternating or symmetric. If symmetric, calculate the genus of $\Omega^{A_n}$.}
To do this, count the number $s$ of ramification types $\E_P\in \E$ that correspond to an odd permutation. 

If $s=0$, then $G=A_n$, otherwise $G=S_n$ (as $\E$ denotes the conjugacy classes of a generating set of $G$). In the latter case, 
due to the correspondence between ramification and orbits of decomposition groups described in \S\ref{sec:genus orbits}, and since the number of orbits of an element of $S_n$ on $S_n/A_n$ corresponds to its parity, the number $s$ also gives the Riemann-Hurwitz contribution of the extension $\Omega^{A_n}/k(t)$. Thus, we calculate the genus of $\Omega^{A_n}$ using the Riemann-Hurwitz formula.
\\
\noindent\textbf{Step I:} {\it In case $G=S_n$, find the ramification  type  $\E'$ of $\Omega^{A_{n-1}}/\Omega^{A_n}$. }

To form $\E'$, run the following procedure:

{\bf Procedure I:}
For each branch point $P$ of $F/k(t)$ with corresponding ramification $\E_P\in \E$, do:
\begin{enumerate}
\item If $\E_P$ corresponds to an even permutation, include it twice into $\E'$;
\item If $\E_P$ corresponds to an odd permutation type, construct and include the multiset $\E'_P$ in $\E'$. To construct $\E'_P$,  for each $r\in \E_P$ do:
\begin{itemize}
\item if $r$ is even, include $r/2$ twice in $\E'_P$; 
\item if $r$ is odd, include $r$ once in $\E'_P$. 
\end{itemize}
\end{enumerate}

(In Figure \ref{diag:subgroups}, Procedure I is applied to the yellow line in order to compute the ramification of the red line above it.)

\textbf{Validity of Procedure I:}
We claim that the ramification type $\E'$ is indeed the ramification type of $\Omega^{A_{n-1}}/\Omega^{A_n}$.
Since $\Omega^{A_n}$ and $F=\Omega^{S_{n-1}}$ are linearly disjoint, this follows from Abhyankar's lemma:
For each branch point $P$ of $F/k(t)$, if $\E_P$ corresponds to an even permutation, then the place $P$ splits in the quadratic extension $\Omega^{A_n}/k(t)$. Hence $\Omega^{A_n}$ has two places $Q_1$ lying over it, and by Abhyankar's lemma  $E_{\Omega^{A_{n-1}}/\Omega^{A_n}}(Q_1)$ is the same as $E_{\Omega^{S_{n-1}}/k(t)}(P)$ for both possibilities for $Q_1$.
If $\E_P$ corresponds to an odd permutation, then there is single place $Q_1$ of $\Omega^{A_n}$ lying over $P$ with $e(Q_1/P)=2$. 
 Abhyankar's lemma then implies that for every place $Q_2$ of $\Omega^{S_{n-1}}$, there is either a unique place $Q$ of $\Omega^{A_{n-1}}$ lying over both $Q_1$ and $Q_2$ if  $e:=e(Q_2/P)$  is odd, or there are two such places $Q$ if $e$ is even. In the former case,  $e(Q/Q_1) = e$ for the unique place $Q$ lying over $Q_1$, and in the latter case  $e(Q/Q_1) = e/2$ for both places $Q$ lying above $Q_1$, proving the claim. 
 \\
 

 	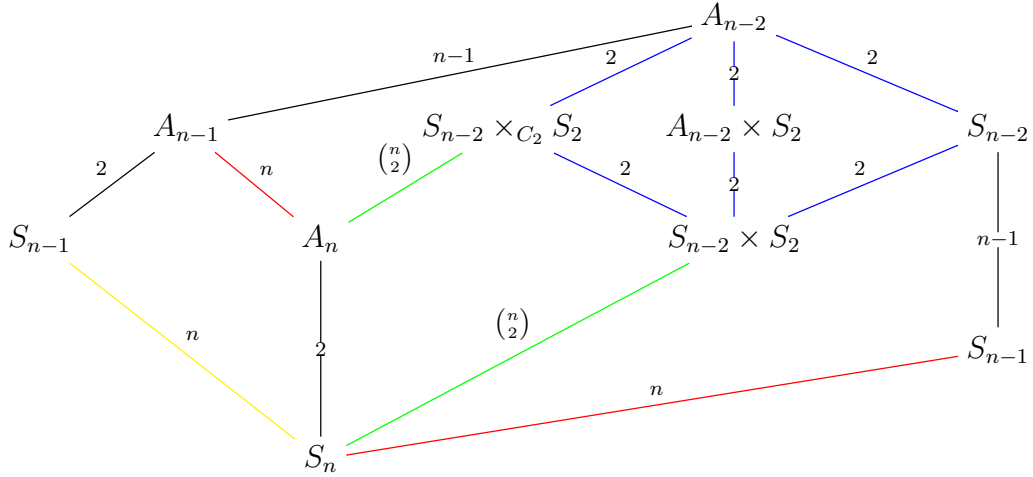
\begin{figure}\caption{Candidate genus $0$ and $1$ subgroups of $S_n$}\label{diag:subgroups}
	\begin{displaymath}
	\xymatrix {
		&											&	&																	&  {\ar@{-}_{n-1}[dlll] } \ar@{-}_{2}@[blue][dl] \ar@{-}|-{2}@[blue][d]	A_{n-2}\ar@{-}^{2}@[blue][drr]	&	& \\
		& \ar@{-}_{2}[dl] A_{n-1} \ar@{-}^{n}@[red][dr] &	  & \ar@{-}_{\binom{n}{2}}@[green][dl] S_{n-2}\times_{C_2}S_2  \ar@{-}^{2}@[blue][dr]  & A_{n-2}\times S_2 \ar@{-}|-{2}@[blue][d]& & \ar@{-}_{2}@[blue][dll] S_{n-2} \ar@{-}|-{n-1}[dd]  \\
		S_{n-1} \ar@{-}^{n}@[yellow][ddrr]& & A_n \ar@{-}|-{2}[dd] & &S_{n-2}\times S_2 \ar@{-}_{\binom{n}{2}}@[green][ddll] & & \\
		& &     & & & &  \ar@{-}_{n}@[red][dllll] S_{n-1} \\
		& & S_n &  & & &\\
	}
	\end{displaymath}
\end{figure}

\noindent {\bf Step II:} {\it If $G=S_n$, find the genus and ramification of $2$-set stabilizers in $A_n$ and in $S_n$. } 
The following procedure takes the cycle structure $\C_P$ of an element $x_P\in S_n$  (i.e., a partition of $n$, or the multiset of cardinalities of the orbits of $x_P$ on $\{1,\ldots,n\}$) and gives a partition $\C_{2,P}$ of $\binom{n}{2}$ which represents the cardinalities of orbits of $x_P$ on $2$-sets of $\{1,\ldots,n\}$ (i.e., the cycle structure of $x_P$ as an element of $S_{\binom{n}{2}}$). See \cite[Lemma 4.1]{NZ} for validity.


\textbf{Procedure II:}
Given a partition $\C_P$ of $n$, construct a partition $\E_{C,P}$ of $\binom{n}{2}$ as follows:
\begin{enumerate}
\item For every two entries $r_1,r_2\in \C_P$, add $\gcd(r_1,r_2)$ copies of $\lcm(r_1,r_2)$ to $\C_{2,P}$;
\item For each $r\in \E'_P$:
\begin{itemize}
\item For every even entry $r\in \C_P$, add $r/2$ copies of $r$ and a single copy of $r/2$ to $\C_{2,P}$; 
\item For every odd entry $r\in \C_P$, add $(r-1)/2$ copies of  $r$ to  $\C_{2,P}$. 
\end{itemize}
\end{enumerate}
Recall from \S\ref{sec:genus orbits} that the ramification of a place $P$ of $k(t)$ in the extension $\Omega^{S_{n-2}\times S_2}$ corresponds to the multiset of cardinalities of orbits of the decomposition group of $P$ on $S_n/S_{n-2}\times S_2$, which in turn is equivalent to the orbits of the decomposition group of $P$ on $2$-sets of $\{1,\ldots,n\}$. Thus apply Procedure II to all elements of $\E$ to find the ramification type $\E_2$ of $\Omega^{S_{n-2}\times S_2}/k(t)$. Similarly, apply Procedure II to all elements of the multiset $\E'$ found in the previous step in order to find the ramification type $\E'_2$ of $\Omega^{S_{n-2}\times_{C_2}S_2}/\Omega^{A_n}$.

(In Figure \ref{diag:subgroups}, Procedure II is applied to the red lines in order to compute the ramification of the green lines.)
Afterwards, use the Riemann--Hurwitz formula for $\Omega^{S_{n-2}\times_{C_2} S_2}/\Omega^{A_n}$ to find the genus of $\Omega^{S_{n-2}\times_{C_2}S_2}$. (The ramification type $\E'$ gives the Riemann-Hurwitz contribution of this extension, and the genus of the field fixed by $A_n$ was found in the previous step. )
\\


\noindent {\bf Step III:} {\it In case $G=S_n$, find the genus of the $2$-point stabilizers $A_{n-2}$ and $S_{n-2}$.} 
Calculate the Riemann-Hurwitz contribution in the extensions $\Omega^{S_{n-2}}/\Omega^{S_{n-2}\times S_2}$ and $\Omega^{A_{n-2}}/\Omega^{S_{n-2}\times_{C_2} S_2}$ using the following procedure described in \cite[Proposition 5.1]{NZ}:

\textbf{Procedure III}:
\begin{enumerate}
	\item Count the total number of even entries in $\E$.
	\item Count the total number of even entries in $\E'$.
\end{enumerate}

Afterwards, use the Riemann-Hurwitz formula to calculate the genera of $\Omega^{A_{n-2}}$ and $\Omega^{S_{n-2}}$.

\noindent {\bf Step IV:} {\it In case $G=S_n$, Find the genus of $A_{n-2}\times S_2$.} 
Let $g_1,g_2,g_3$ denote the genera of $\Omega^{S_{n-2}},\Omega^{S_{n-2}\times_{C_2}S_2}$, and $ \Omega^{A_{n-2}\times S_2}$, respectively. Denote by $g_0$ and $g'$ the genera of $\Omega^{S_{n-2}\times S_2}$ and $\Omega^{A_{n-2}\times S_2}$. A formula for  the genera of intermediate extensions of a biquadratic extension is given in \cite{Acc}:
\begin{equation}\label{equ:Accg3}
g_3 = g'-g_1-g_2+2g_{0}
\end{equation}
The formula is applicable as $\Omega^{S_{n-2}}$,  $\Omega^{S_{n-2}\times_{C_2}S_2}$ and $ \Omega^{A_{n-2}\times S_2}$ are the three quadratic intermediate extensions of the biquadratic extension $\Omega^{A_{n-2}}/\Omega^{S_{n-2}\times S_2}$. (In figure \ref{diag:subgroups}, this biquadratic extension is denoted by the blue lines).\\

\noindent{\bf Step V: }{\it If $G=A_n$, calculate genus of $\Omega^{A_{n-2}}$.} In this case, $A_{n-2}$ is a two-point stabilizer of $G$ and so as in Step III, calculate its genus using the Riemann Hurwitz formula for $\Omega^{A_{n-2}}/\Omega^{S_{n-2}}$, where the Riemann-Hurwitz contribution is given by counting the total number of even entries in $\E$.
\end{proof}


\section{Proofs of Theorems \ref{thm:main-new} and \ref{thm:pols}}
The following theorem is the function field version of Theorem \ref{thm:main-new}. Let $k$ be an algebraically closed field of characteristic $0$. Let $N_g$ be as in Remark \ref{rem:same-const}.
\begin{thm}\label{thm:ff-main-new}
	Fix $g\geq 0$. Let $F/k(t)$ be a minimal extension  of degree $n>N_g$ with Galois closure $\Omega$, and assume $G:=\Gal(\Omega/k(t))$ is $A_n$ or $S_n$. Suppose $\Omega^H$ is of genus $\leq g$ for some $H\leq G$, such that $H\not\geq A_{n-1}$.
	Then $A_{n-2}\lneq H \leq S_{n-2}\times S_2$, and the ramification of $F/k(x)$ is listed in Proposition \ref{prop:ramification}. In fact, $\Omega^H$ is of genus $\leq 1$.
\end{thm}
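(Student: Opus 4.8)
The plan is to assemble \Tref{thm:ff-main-new} from the machinery already developed, treating it as the payoff of the group-theoretic and ramification analyses. First I would invoke \Tref{thm:NZ orbits condition}: since $\Omega^H$ has genus $\leq g$ and $n>N_g$, the subgroup $H\leq G$ satisfies $O_2(H)=O_3(H)=\dots=O_D(H)$, where $D$ is the transitivity threshold from \Sref{section:D definition}. The hypothesis $H\not\geq A_{n-1}$ must be leveraged to control which subgroup $H$ can be; the natural move is to pass to a maximal overgroup. Concretely, I would argue that if $\Omega^H$ has small genus then $H$ must be contained in a point stabilizer or a $2$-set stabilizer, ruling out the primitive case via part (2) of \Rref{rem:same-const}: any maximal $H\neq A_n$ fixing a field of genus $\leq g$ is either a point stabilizer, or (when the ramification of $F/k(t)$ lands in \cite[Table 4.1]{NZ}) a $2$-set stabilizer. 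So the genus bound forces $H$ into one of these two imprimitive families.

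Next I would apply \Cref{cor:groups}. Having reduced to the case where $H$ lies in a point stabilizer or a $2$-set stabilizer and satisfies $O_2(H)=\dots=O_D(H)$, the corollary pins down $H$ exactly: in the point-stabilizer case $H$ is $A_{n-1}$ or $S_{n-1}$, and in the $2$-set-stabilizer case $H$ is one of $A_{n-2}$, $S_{n-2}$, $S_2\times S_{n-2}$, $S_2\times_{C_2}S_{n-2}$, or $S_2\times A_{n-2}$. The standing assumption $H\not\geq A_{n-1}$ eliminates the first family outright, as well as forcing $H$ to sit strictly above $A_{n-2}$ in the second family (the pure point- and pure $2$-set-pointwise stabilizers $A_{n-2}$, $S_{n-2}$ are exactly those not containing $A_{n-1}$ yet needing to be excluded or retained). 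This yields the chain $A_{n-2}\lneq H\leq S_{n-2}\times S_2$ asserted in the theorem.

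The final two claims — that the ramification of $F/k(t)$ is as listed in \Pref{prop:ramification}, and that $\Omega^H$ is in fact of genus $\leq 1$ — follow by feeding the surviving groups through \Pref{prop:ramification}. Since $\Omega^H$ has genus $\leq g$ and the proposition shows that for each candidate $H\in\{A_{n-2}\times S_2,\ S_{n-2}\times_{C_2}S_2,\ S_{n-2}\times S_2\}$ the fixed field is either of genus $0$, genus $1$, or of genus at least $\max\{2,cn-d\}$, the genus bound $\leq g$ together with $n>N_g$ (large enough that $cn-d>g\geq 1$ is impossible to reconcile with genus $\leq g$ unless $g\geq cn-d$) forces the genus into $\{0,1\}$ and pins the ramification type $\E$ of $F/k(t)$ to the explicit entries of Tables \ref{table:A_n-2S_2genus0 cases} and \ref{table:A_n-2S_2 genus 1cases} (and the corresponding entries for the other two groups). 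This simultaneously establishes that $\Omega^H$ has genus $\leq 1$ and that the ramification is as listed.

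The main obstacle I anticipate is the bookkeeping at the reduction step: justifying that a genus-$\leq g$ subfield $\Omega^H$ forces $H$ into a point or $2$-set stabilizer, for \emph{arbitrary} (not necessarily maximal) $H$. The clean route is to take a maximal overgroup $M\geq H$ with $M\neq A_n$ and apply \Rref{rem:same-const}(2) to $M$ — but one must check that $\Omega^M\subseteq\Omega^H$ has genus $\leq g$ as well (immediate, since subfields of a genus-$\leq g$ field have genus $\leq g$ under a minimal/nonincreasing-genus argument), and then verify that $H\leq M$ lands in the stabilizer structure and still satisfies the orbit equalities. Care is also needed to ensure the constant $N_g$ chosen in \Rref{rem:same-const} is simultaneously large enough for \Tref{thm:NZ orbits condition}, \Cref{cor:groups} (which needs $n\geq 8$), and \Pref{prop:ramification} (which needs $n\geq 13$ and $cn-d>g$); this is exactly why $N_g$ is defined to make all of (1) and (2) hold at once.
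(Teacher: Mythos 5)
Your route matches the paper's in outline (Theorem \ref{thm:NZ orbits condition} $\Rightarrow$ reduction to a point or $2$-set stabilizer via Remark \ref{rem:same-const}(2) $\Rightarrow$ Corollary \ref{cor:groups} $\Rightarrow$ Proposition \ref{prop:ramification}), but there are two genuine gaps, both traceable to the one step of the paper's proof you omit: the initial reduction to $G=A_n$. Your argument assumes you can ``take a maximal overgroup $M\geq H$ with $M\neq A_n$.'' If $G=S_n$ and $H\subseteq A_n$, no such $M$ need exist: the only maximal subgroup of $S_n$ containing $H$ may be $A_n$ itself (e.g.\ $H$ a primitive maximal subgroup of $A_n$ not normalized by any odd permutation). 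Remark \ref{rem:same-const}(2) explicitly excludes $H=A_n$, and for good reason --- $A_n$ is neither a point stabilizer nor a $2$-set stabilizer, and $\Omega^{A_n}$ can perfectly well have genus $0$ --- so the dichotomy you invoke says nothing in this case. The paper handles it by first showing that if $H\subseteq A_n$ then $\Omega^{A_n}$ must have genus $0$ (its genus is at most that of $\Omega^H$, and genus $\geq 1$ is impossible for $n>N_g$ by Remark \ref{rem:Angenus1}), and then replacing the base $k(t)$ by the rational field $\Omega^{A_n}$ and $G$ by $A_n$; after this replacement every maximal overgroup of $H$ is automatically different from $A_n$, and your reduction goes through.

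The second gap concerns the strict inequality $A_{n-2}\lneq H$. You claim it follows from $H\not\geq A_{n-1}$, but $A_{n-2}$ does not contain $A_{n-1}$, so that hypothesis does not eliminate $H=A_{n-2}$ (in fact none of the five groups in Corollary \ref{cor:groups}(2) contains $A_{n-1}$; the hypothesis only kills the point-stabilizer family). Moreover, in your final step you feed only $A_{n-2}\times S_2$, $S_{n-2}\times_{C_2}S_2$ and $S_{n-2}\times S_2$ through Proposition \ref{prop:ramification}, silently dropping both $A_{n-2}$ (which must be \emph{excluded}) and $S_{n-2}$ (which must be \emph{retained}, and whose genus-$0$ case is exactly type (I1.1)). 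The correct exclusion of $A_{n-2}$ is genus-theoretic, not group-theoretic: after the reduction to $G=A_n$, Proposition \ref{prop:ramification} (case $G=A_n$) gives that the field fixed by $A_{n-2}$ has genus at least $\max\{2,cn-d\}>g$ for $n>N_g$, contradicting the hypothesis on $\Omega^H$. Note that this exclusion again relies on the $A_n$-reduction you skipped, since the proposition has no entry for $A_{n-2}$ when $G=S_n$; as written, your proof does not establish $H\neq A_{n-2}$.
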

\begin{proof}
 If $H\subseteq A_n$, we may assume that the genus of the field fixed by $A_n$ is $0$ (otherwise the field  $\Omega^{A_n}$ is of genus $1$, which is impossible when $n>N_g$, see Remark \ref{rem:Angenus1}), and thus  replace $G$ by $A_n$. Let $M$ be the maximal subgroup of $G$ containing $H$. (In particular, $M\neq A_n$). Let $N_g$ be the constant from Remark \ref{rem:same-const}.  In particular for all $n>N_g$, as in Remark \ref{rem:same-const}, either the genus of $\Omega^{M}$ is strictly larger than $g$ or $M$ is a point stabilizer or a $2$-set stabilizer of $G$ by \cite[Theorem 1.1]{NZ}. Thus $H$ is contained in a point stabilizer or a $2$-set stabilizer of $G$. 
Due to the genus assumption on $\Omega^H$, Theorem \ref{thm:NZ orbits condition} implies that $O_2(H)=\dots =O_{D}(H)$. If furthermore the ramification type of $F/k(t)$ is not one of the exceptions given in  \cite[Table 4.1]{NZ}, then we also get $O_1(H)=O_2(H)$. Corollary \ref{cor:groups} therefore gives the list of possibilities for  $H$. Proposition \ref{prop:ramification} then gives the occuring ramification types for each possibility for $H$, and also  implies that since the genus of $\Omega^{H}$ is a constant not depending on $n$, it is less than or equal to $1$.
\end{proof} 

To prove  Theorem \ref{thm:pols}, we first verify that there are $5$ branch points:
\begin{lem}\label{lem:5br-pts}
Let $f(t,x) = p(x)-t\in k(t)[x]$ be a polynomial with  splitting field $\Omega$, and Galois group $G=A_n$ or $S_n$ for $n\geq 17$. Let  $G_1\leq G$ be the stabilizer of a root $x_1$, and $G_2$ the stabilizer of a $2$-set which does not contain  $x_1$.  Then:
\begin{enumerate}
\item The extension $\Omega/k(x_1)$ has at least $5$ branch points;
\item If the ramification of $k(x_1)/k(t)$ is of one of the types (I1.1)-(I2.8) in Table \ref{table:NZ1}, then $\Omega^{G_2}(x_1)/\Omega^{G_2}$ has at least $5$ branch points. 
\end{enumerate}
\end{lem}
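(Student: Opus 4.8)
The plan is to convert ``number of branch points'' into combinatorics of the branch cycles via the orbit--ramification dictionary of \S\ref{sec:genus orbits}. Write $\sigma_P\in G$ for the branch cycle over a branch point $P$ of $k(t)$ (cf.~\S\ref{sec:coverings}), so its cycle lengths on $\nset{n}$ are $E_{k(x_1)/k(t)}(P)$, and set $m_P=\ord(\sigma_P)$, the least common multiple of these lengths. Since $p$ is a polynomial, one branch point is $\infty$, with $\sigma_\infty$ an $n$-cycle. Throughout, $\ind(\sigma_P)=n-\#\{\text{cycles of }\sigma_P\}$.

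For (1), \S\ref{sec:genus orbits} identifies the places of $k(x_1)=\Omega^{G_1}$ over $P$ with the cycles of $\sigma_P$: a cycle of length $\ell$ gives a place $Q$ with $e(Q|P)=\ell$, whence $e(\tilde Q|Q)=m_P/\ell$ for $\tilde Q$ over $Q$ in $\Omega$, so $Q$ is a branch point of $\Omega/k(x_1)$ iff $\ell<m_P$. Thus the number of branch points is $B=\sum_P b_P$, where $b_P$ counts the cycles of $\sigma_P$ of length $<m_P$ and $b_\infty=0$. I would argue by contradiction: if $B\le 4$ then every $b_P\le 4$, and since each fixed point is a cycle of length $1<m_P$, the number $f_P$ of fixed points satisfies $f_P\le b_P\le 4$; with the elementary inequality $f_P\ge n-2\ind(\sigma_P)$ this forces $\ind(\sigma_P)\ge(n-4)/2$ for every $P$. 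As $\sum_P\ind(\sigma_P)=2n-2$ by Riemann--Hurwitz (both $k(t)$ and $k(x_1)$ are rational) and $\ind(\sigma_\infty)=n-1$, the finite branch points carry total index $n-1$, leaving at most two of them once $n\ge 17$.

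The crux is then to exclude the few remaining configurations group-theoretically: one finite branch point gives $\sigma_\infty\sigma_1=1$, so $G$ is cyclic; two finite involutions give $\sigma_\infty=\sigma_2\sigma_1$, so $G=\langle\sigma_1,\sigma_2\rangle$ is dihedral. Neither is $A_n$ or $S_n$, so there are exactly two finite branch cycles $\sigma_1,\sigma_2$ with $\ind(\sigma_1)+\ind(\sigma_2)=n-1$ and, say, $m_1\ge 3$. For a non-involution all cycles of length $1$ or $2$ are short, and cycles of length $\ge 3$ number at most $\tfrac12\ind(\sigma_1)$, so $b_1\ge n-\tfrac32\ind(\sigma_1)$; combined with $b_2\ge f_2\ge n-2\ind(\sigma_2)$ this gives $b_1+b_2\ge 2+\tfrac12\ind(\sigma_1)\ge 2+(n-4)/4>4$ for $n\ge 17$ (when one of these two lower bounds is vacuous, the other alone already exceeds $4$), contradicting $B\le 4$. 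This dihedral/cyclic step is the main obstacle, since the purely numerical bounds admit the spurious solution of two finite involutions.

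For (2), $\Omega^{G_2}(x_1)=\Omega^{G_1\cap G_2}$ is the compositum of $k(x_1)$ and $\Omega^{G_2}$, and I apply the same dictionary to the extension $\Omega^{G_1\cap G_2}/\Omega^{G_2}$. A place $R$ of $\Omega^{G_2}$ over $P$ corresponds to a $\sigma_P$-orbit $\mathcal O$ of $2$-sets; fixing a representative $B\in\mathcal O$, Abhyankar's Lemma \ref{lem:Abhyankars lemma} (equivalently, the orbit picture) identifies the places above $R$ with the cycles of $\sigma_P^{|\mathcal O|}$ on $\nset{n}\setminus B$, the ramification index over $R$ being the cycle length. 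Hence $R$ is a branch point precisely when $\sigma_P^{|\mathcal O|}$ acts nontrivially on $\nset{n}\setminus B$; note the criterion here is nontriviality of the action, in contrast with the condition $\ell<m_P$ of part (1), and getting this right is the subtle point. I would then inspect the types I1.1--I2.8: each contains an involution branch cycle $\sigma_P$ (a transposition in I1.1, I2.1, I2.2, and one of $[1,2^{(n-1)/2}]$, $[1^2,2^{(n-2)/2}]$, $[1^3,2^{(n-3)/2}]$ otherwise) with $f\ge 1$ fixed points and $t_2=\ind(\sigma_P)$ transpositions. Its invariant $2$-sets are the $\binom{f}{2}$ pairs of fixed points and the $t_2$ transposition-pairs; since $m_P=2$, each pair of fixed points is a branch point when $t_2\ge 1$ and each transposition-pair is a branch point when $t_2\ge 2$ (orbits of size $2$ contribute nothing, as $\sigma_P^2=1$). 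This yields at least $\binom{f}{2}+t_2\ge t_2=(n-3)/2$ branch points for the non-transposition involutions, and $\binom{n-2}{2}$ for a transposition, each exceeding $5$ once $n\ge 17$.
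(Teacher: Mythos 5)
Your proof is correct, and it reaches the conclusion by a genuinely different route from the paper's, most visibly in part (1). The paper never argues inside $\Omega/k(x_1)$ directly: it introduces the two-point stabilizer field $F=\Omega^{\hat G_2}$ with $\hat G_2\leq G_1$ and uses a compositum form of Abhyankar's lemma to show that every place of $k(x_1)$ that is \emph{unramified} over a branch point of $k(t)$ must ramify in $F/k(x_1)$; Riemann--Hurwitz then bounds the number $\sum_i u_i$ of such places below, with a case split on the number $s$ of finite branch points ($s\geq 3$ gives $\sum_i u_i\geq n+2$ at once, while $s=2$ with $u_1+u_2\leq 4$ requires excluding the two-involution, i.e.\ dihedral, configuration and then matching the surviving ramification data against types (I2.3)--(I2.8) of Table \ref{table:NZ1}, which receive a separate Abhyankar argument via the index-$2$ places under a branch point possessing an entry $>2$). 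Your contradiction argument --- the exact criterion that a cycle of length $\ell$ over $P$ ramifies in $\Omega/k(x_1)$ iff $\ell<m_P$, the fixed-point inequality $f_P\geq n-2\ind(\sigma_P)$, and the index budget $\sum_P \ind(\sigma_P)=n-1$ over finite $P$ --- treats all ramification types uniformly, keeps the same cyclic/dihedral exclusions, and avoids the table entirely; the price is that it does not re-derive which ramification types actually occur, information the lemma does not need but which the paper's route produces as a by-product. For part (2), both proofs rest on the same orbit--place dictionary for $\Omega^{G_2}$ and $\Omega^{G_1\cap G_2}=\Omega^{G_2}(x_1)$, but the mechanisms differ: the paper establishes only a sufficient criterion (a place coming from an orbit of $2$-sets inside $R_2\cup R_3$ is a branch point whenever some third orbit length $r_1$ fails to divide $\lcm(r_2,r_3)$) and applies it with $R_1$ the longest cycle of a well-chosen branch cycle, whereas you derive the exact criterion ($\sigma_P^{|\mathcal O|}$ acting nontrivially on the complement of a representative $2$-set) and apply it to the invariant $2$-sets of an involution branch cycle; your counts $\binom{f}{2}+t_2$ and $\binom{n-2}{2}$, like the paper's $\binom{(n-7)/2}{2}$ and $\binom{n-2}{2}$, comfortably exceed $5$. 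One trivial imprecision: for the involutions in (I2.4), (I2.5), (I2.7), (I2.8) one has $t_2\in\{(n-2)/2,(n-1)/2\}$, so the final display should read $t_2\geq (n-3)/2$ rather than equality.
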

See Appendix \ref{sec:appendix} for the proof. We next prove a strengthening of Theorem \ref{thm:pols}:
\begin{thm}\label{thm:pols-ff}
	Let $f(t,x)=p(x)-t\in k(t)[x]$ be a polynomial of degree $n>20$, splitting field $\Omega$ and Galois group $G=A_n$ or $S_n$. 
	Suppose $\Omega^H$ is of genus at most $1$ for nonmaximal $H\leq G$ which does not contain $A_{n-1}$. 
	Then   $H = S_{n-2}$ or $A_{n-2}\times S_2$. 

	Furthermore, if $H=S_{n-2}$, then the genus of $\Omega^H$ is $0$, and up to composition with linear polynomials $p$ equals $x^a(x-1)^{n-a}$ for some $1\leq a<n$ coprime to $n$. If $H=A_{n-2}\times S_2$, then the genus of $\Omega^H$ is $1$, and the ramification of the polynomial covering $p$ is one of types (I2.3), (I2.5), (I2.6), (I2.8) in Table \ref{table:A_n-2S_2 genus 1cases}. 
	\end{thm}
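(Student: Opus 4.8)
The plan is to convert the genus bound into a homogeneity statement for the action of $H$ on the complement of its fixed set, promote that homogeneity until it forces the alternating group there, and only then use the defining feature of a polynomial covering---that one branch cycle is the full $n$-cycle $[n]$---to reduce the possible ramification to the tables of Proposition \ref{prop:ramification}. Since $H$ is nonmaximal it is not $A_n$, so Theorem \ref{thm:pols-orbits} gives $O_2(H)=O_3(H)$. Let $M$ be a maximal subgroup with $H\leq M$; then $\Omega^M\subseteq\Omega^H$ forces $g_{\Omega^M}\leq 1$, and the genus classification of \cite{GS} shows $M$ is a point stabilizer or a $2$-set stabilizer. Proposition \ref{prop:groups} applied to $O_2=O_3$ then shows that the image $\bar H$ of $H$ in the symmetric group on the complement (of size $n-1$ or $n-2$) is $3$-homogeneous.

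To upgrade this to $\bar H\supseteq A_{n-1}$ or $\bar H\supseteq A_{n-2}$, I would sharpen the orbit analysis one further step, deriving $O_3(H)=O_4(H)$---hence $4$-homogeneity of $\bar H$---from a genus gap $g_4-g_3>1$ via the Guralnick--Shareshian inequality \eqref{equ:GS sum inequality}, exactly as $O_2=O_3$ is derived from $g_3-g_2>2$. Lemma \ref{lem:5br-pts} is the enabling device here, as it guarantees at least $5$ branch points for the relevant covers (over $k(x_1)$, resp.\ over the $2$-set cover) so that the sharper \cite{GS} estimates apply. This promotion is the step I expect to be the main obstacle: unlike the general case of Theorem \ref{thm:NZ orbits condition}, whose chain $O_2=\dots=O_D$ comes directly from the \cite{NZ} bounds $g_{i+1}-g_i>g$, the polynomial setting yields the single gap $g_3-g_2>2$ cleanly but the higher gaps only after securing enough branch points. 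Four-homogeneity is exactly what is needed, since the only $3$-homogeneous groups of unboundedly large degree not containing the alternating group are $\PSL_2(q)\leq\bar H\leq\PGaL_2(q)$ on $q+1$ points, and these are not $4$-homogeneous; the finitely many degrees with $n-2\leq 33$ would be dispatched from the explicit data of \cite{GS}.

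Once $\bar H$ contains the alternating group on the complement, the point-stabilizer alternative is eliminated---it would give $\bar H\supseteq A_{n-1}$, against the hypothesis $H\not\supseteq A_{n-1}$---and in the $2$-set case the fiber-product analysis of Corollary \ref{cor:groups}(2) leaves $H$ among $A_{n-2}$, $S_{n-2}$, $S_2\times S_{n-2}$, $S_2\times_{C_2}S_{n-2}$, $S_2\times A_{n-2}$. Now I sieve: $S_2\times S_{n-2}$, and when $G=A_n$ also the maximal $S_2\times_{C_2}S_{n-2}$, are excluded by nonmaximality, while Proposition \ref{prop:ramification} assigns $A_{n-2}$ genus $\geq 2$. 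For the survivors I invoke Proposition \ref{prop:ramification} together with the polynomial constraint that some branch cycle equals $[n]$: this discards every genus $\leq 1$ entry of the relevant tables that lacks an $[n]$ part---type (F3.2) for $S_{n-2}\times_{C_2}S_2$, and all of Table \ref{table:A_n-2S_2genus0 cases} together with (F1.5), (F1.8), (F1.9) for $A_{n-2}\times S_2$. What survives is exactly $H=S_{n-2}$ of genus $0$ with ramification of type (I1.1), and $H=A_{n-2}\times S_2$ of genus $1$ with ramification among (I2.3), (I2.5), (I2.6), (I2.8); both contain an odd permutation and so force $G=S_n$.

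Finally, for $H=S_{n-2}$ the type (I1.1) data $[n], [a,n-a], [2,1^{n-2}]$ is realized by $p=x^a(x-1)^{n-a}$: its derivative $x^{a-1}(x-1)^{n-a-1}(nx-a)$ shows total ramification over $\infty$, the partition $[a,n-a]$ over $0$, and a single double point over $p(a/n)$. Since three-branch-point data determines the covering up to composition with linear polynomials, $p$ equals $x^a(x-1)^{n-a}$ up to such changes, with $\gcd(a,n)=1$ guaranteeing that the monodromy is $A_n$ or $S_n$, completing the argument.
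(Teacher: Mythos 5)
Your opening and closing match the paper's proof: the reduction of the maximal overgroup $M$ to a point or $2$-set stabilizer via \cite[Theorem 1.2.1]{GS}, the equality $O_2(H)=O_3(H)$ from Theorem \ref{thm:pols-orbits}, the $3$-homogeneity of the image $\oline H$ from Proposition \ref{prop:groups}, and then the final sieve (nonmaximality, Proposition \ref{prop:ramification}, the $n$-cycle constraint singling out (I1.1) and (I2.3), (I2.5), (I2.6), (I2.8), and the normalization to $x^a(x-1)^{n-a}$) are all as in the paper. The divergence is the middle step, where you promote $3$-homogeneity to containment of the alternating group, and this is where your argument has a genuine gap.

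You propose to derive $O_3(H)=O_4(H)$ from a genus gap $g_4-g_3>1$ via \eqref{equ:GS sum inequality}, ``exactly as'' $O_2=O_3$ is derived from $g_3-g_2>2$. But no such estimate is available at this degree bound: Theorem \ref{thm:pols-orbits} rests on \cite[Lemma 12.0.68]{GS} and \cite[Theorem A.4.2]{GS}, which concern precisely the increment $g_3-g_2$, and the only source for higher increments is \eqref{equ:giNZ}, valid only for $n>N_g\gg 20$; producing a $g_4-g_3$ bound at $n>20$ would require new computations of the kind carried out in the appendix of \cite{GS} for $3$-sets. Moreover, your stated enabling device is misplaced: Lemma \ref{lem:5br-pts} guarantees at least $5$ branch points for the covers $\Omega/k(x_1)$ and $\Omega^{G_2}(x_1)/\Omega^{G_2}$, i.e.\ over the point- or $2$-set-stabilizer field, whereas the branch-point hypotheses in the genus-increment estimates of \cite{GS} refer to the cover over $k(t)$ --- and a polynomial cover can have as few as three branch points over $k(t)$ (type (I1.1) does). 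The paper uses the $\geq 5$ branch points exactly where they live: it applies \cite[Theorem 1.1.2]{GS} to the extension $\oline\Omega^{\oline H}/\Omega^{G_i}$ (after a dichotomy on whether the core of $\oline H$ in $\oline G_i$ is trivial), converting $3$-homogeneity, genus $\leq 1$, and the $5$ branch points directly into $\oline H\cong A_{n-i}$ or $S_{n-i}$; no $4$-homogeneity upgrade, hence no $g_4-g_3$ estimate, is ever needed. Two further inaccuracies in your step: the $3$-homogeneous groups of unbounded degree not containing the alternating group are not only those with $\PSL_2(q)\leq\oline H\leq\PGaL_2(q)$ (you omit, e.g., the $3$-transitive groups $\AGL(d,2)$ on $2^d$ points), and even granting $4$-homogeneity, $M_{23}$ and $M_{24}$ are $4$-transitive of degrees $23$ and $24$, so the cases $n=25,26$ allowed by $n>20$ are not dispatched by your argument. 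Finally, your appeal to ``three-branch-point data determines the covering up to composition with linear polynomials'' is an unproved rigidity claim; the paper avoids it by normalizing so that the fiber of type $[a,n-a]$ lies over $0$ with preimages $0$ and $1$, which forces $p$ to be a constant multiple of $x^a(x-1)^{n-a}$ outright.
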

\begin{proof}
Let $G$ act on the set $\{1,\ldots,n\}$. As in the proof of Theorem \ref{thm:ff-main-new},
if $H\subseteq A_n$, we  replace $G$ by $A_n$. Let $M$ be the maximal subgroup of $G$ containing $H$. (In particular, $M\neq A_n$). 
By \cite[Theorem 1.2.1]{GS}, $M$ is a point   or  $2$-set stabilizer. 
Thus Theorem \ref{thm:pols-orbits} implies that $O_2(H)=O_3(H)$. 
If $H$ has only one fixed point, then $H$ is $3$-homogenous on a subset $U_1$ of cardinality $n-1$ by  Proposition \ref{prop:groups}.(1). 
If on the other hand $H$ stabilizes a $2$-set, then 
$H$ is $3$-homogenous on a subset $U_2$ of cardinality $n-2$ by Proposition \ref{prop:groups}.(2). Henceforth fix $i\in \{1,2\}$ such that $H$  is $3$-homogenous on $U_i$, and let $G_i\supseteq H$ be the stabilizer of an $i$-set. Let $\oline H$ and $\oline G_i$  be the images of $H$ and $G_i$ under the projection $\pi_i:G_i\ra \Sym(U_i)\cong S_{n-i}$, respectively. Let $\oline\Omega = \Omega^{\ker \pi_i}$ so that $\Gal(\oline\Omega/\Omega^{G_i})\cong \oline G_i\cong A_{n-i}$ or $S_{n-i}$. 

Letting $V\leq \oline G_i$ be a stabilizer of a point in $U_i$,  Lemma \ref{lem:5br-pts} implies\footnote{Note that if $i=2$, then the ramification of $\Omega^{G_1}/k(t)$ is  in Table \ref{table:NZ1} as in Remark \ref{rem:same-const}, and hence the lemma can be applied in that case.} that $\oline\Omega^{V}/\Omega^{G_i}$ has at least $5$ branch points. 
If the core of $\oline H$ in $\oline G_i$ is trivial, then $\oline\Omega^{H}/\Omega^{G_i}$  also has at least $5$ branch points. 
However, since the latter  is an extension of genus $\leq 1$ with $3$-homogenous stabilizer $\oline H$ on $U_i$, \cite[Theorem 1.1.2]{GS} implies that  $\oline H\cong A_{n-i}$ or $S_{n-i}$.

Since $H$ does not contain $A_{n-1}$, as in the proof of Corollary \ref{cor:groups} this implies that $H$ is one of the groups  $A_{n-2}$, $S_{n-2}$, $S_2\times S_{n-2}$,  $S_2\times_{C_2}S_{n-2}$ or $S_2\times A_{n-2}$. 
The corresponding ramification types are then given by Proposition \ref{prop:ramification}. The only resulting ramification types with an $n$-cycle are (I1.1) with $H=S_{n-2}$, or (I2.3), (I2.5), (I2.6) and (I2.8) with $H=A_{n-2}\times S_2$. In case the ramification is (I1.1), by composing with linears we may assume the branch point of type $[a,n-a]$ is $0$, and its preimages under $p$ are $0$ and $1$, that is, $p(x)$ is a constant multiple  of $x^a(x-1)^{n-a}$. 
\end{proof}

\section{Hilbert irreducibility}\label{sec:Hilbert}

Let $k$ be a finitely generated field of characteristic $0$.
Let $f\in k(t)[x]$ be a polynomial with splitting field $\Omega$ and Galois group $A$. 
For a place $(t-t_0)\lhd k[t]$, 
let $D_{P}\leq A$ denote the decomposition group of a prime $P$ of the integral closure of $k[t]$ in $\Omega$ which lies over $(t-t_0)$. 
Note that by varying $P$ over the places of $\Omega$ lying above $(t-t_0)$, 
we obtain the conjugates of $D_{P}$ in $A$.  We denote by $D_{t_0}$ the conjugacy class of such subgroups. For $D\leq A$, we write $D=D_{t_0}$ to denote that $D$ is some conjugate of $D_{P}$.
For every $t_0\in k$ which is not a root of the discriminant $\delta_f\in k(t)$ of $f$, it is well known that  $\Gal(f(t_0,x),k)$ is permutation isomorphic to $D_{P}$ \cite[Lemma 2]{KN2}. 


 The following well known proposition describes the relevant properties of $D_{t_0}$, cf.~\cite[Prop.~2.4]{KN}. Let $\tilde{X}$ be the (irreducible smooth projective) curve corresponding to $\Omega$.  If $D$ is the decomposition group at an unramified place $t\mapsto t_0\in \mQ$, then there exists a natural covering $f_D:X_D\ra\mP^1_\mQ$ from the quotient $X_D:=\tilde X/D$.
\begin{prop}
\label{prop:spec}
Let $f\in k(t)[x]$ be irreducible with 
Galois groups $G$ and $A$ over $\oline k(t)$ and $k(t)$, respectively. 
Suppose $t_0\in k$ is neither a root nor a pole of $\delta_f(t)$, 
and $D=D_{t_0}$ is its decomposition group. 
Then:
\begin{enumerate} 
\item 
$t_0\in f_D(X_D(k))$, and $DG = A$; 
\item $f(t_0,x)\in k[x]$ is reducible if and only if $D$ is intransitive. 
\end{enumerate}
\end{prop}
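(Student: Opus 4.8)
The plan is to treat the two assertions separately, reducing both to the standard dictionary between the decomposition group $D$ and the arithmetic of the specialization at $t_0$, and leaning on the quoted fact \cite[Lemma 2]{KN2} that $\Gal(f(t_0,x),k)$ is permutation isomorphic to $D$. Throughout I would fix a prime $P$ of the integral closure of $k[t]$ in $\Omega$ lying over $(t-t_0)$ with $D=D_P$; since $G\trianglelefteq A$ and conjugation permutes these primes, every statement to be proved (transitivity of $D$, the equality $DG=A$, and $X_{D}\cong X_{D'}$ for conjugate $D,D'$) is invariant under replacing $P$, so working with one such $P$ suffices.

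Assertion (2) I expect to be essentially immediate. Because $t_0$ is neither a root nor a pole of $\delta_f$, the polynomial $f(t_0,x)\in k[x]$ is defined and separable, and its Galois group is permutation isomorphic to $D$ acting on the $n$ roots, i.e. to the restriction to $D$ of the permutation action of $A$ on the roots of $f$. The irreducible factors of $f(t_0,x)$ over $k$ then correspond bijectively to the orbits of $D$ on the roots. Hence $f(t_0,x)$ is irreducible exactly when $D$ is transitive, which is precisely (2).

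For the rationality part of (1), I would let $Q$ be the image of $P$ under the quotient map $\tilde X\to X_D=\tilde X/D$. As $t_0\in k$ the residue field at $(t-t_0)$ is $k$, and since $t_0$ is unramified the inertia group is trivial, so reduction gives an isomorphism $D\xrightarrow{\sim}\Gal(k_P/k)$, where $k_P$ is the residue field at $P$. Now $\Omega/\Omega^D$ is Galois with group $D$ and $P$ has full decomposition group $D$ in it (its decomposition group there is $D\cap D_P=D$), so $P$ is the unique prime over $Q$ and $\Gal(k_P/k_Q)\cong D$ has order $|D|=[k_P:k]$; combined with $k\subseteq k_Q\subseteq k_P$ this forces $k_Q=k$. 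Thus $Q\in X_D(k)$ lies over $t_0$, giving $t_0\in f_D(X_D(k))$. For $DG=A$ I would use the constant field $k':=\oline{k}\cap\Omega$, recalling that $G$ is identified with $\Gal(\Omega/k'(t))$, so $G\trianglelefteq A$ and restriction yields $A/G\cong\Gal(k'/k)$. Since $k'$ consists of constants it embeds into $k_P$, and the action of $D$ on $k'$ by reduction agrees with its action by restriction of Galois automorphisms; hence under $D\cong\Gal(k_P/k)$ the image of $D$ in $A/G=\Gal(k'/k)$ is the image of the restriction map $\Gal(k_P/k)\to\Gal(k'/k)$, which is surjective because $k'/k$ is Galois. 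Therefore $D$ surjects onto $A/G$, i.e. $DG=A$.

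The routine content here is standard, so I expect the only real care to lie in the bookkeeping between the geometric group $G$ and the arithmetic group $A$ and the precise role of the constant field $k'$: the crux of $DG=A$ is exactly that $k'\subseteq k_P$ together with the surjectivity of $\Gal(k_P/k)\to\Gal(k'/k)$. A secondary point to verify carefully is the unramifiedness hypothesis entering twice — once to trivialize inertia in the isomorphism $D\cong\Gal(k_P/k)$ (needed for $k$-rationality of $Q$) and once to guarantee separability of $f(t_0,x)$ (needed for the permutation identification underlying (2)) — both of which follow from $t_0$ not being a root of $\delta_f$.
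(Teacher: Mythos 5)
Your proposal is correct, and it is worth noting up front that the paper itself gives \emph{no} proof of Proposition \ref{prop:spec}: it is introduced as a ``well known proposition'' with a pointer to \cite[Prop.~2.4]{KN}, and the text moves on immediately. So your argument is not competing with an in-paper proof but supplying the standard one that the citation stands for, and it does so accurately. Part (2) is exactly the expected reduction: separability from $\delta_f(t_0)\neq 0$, the permutation isomorphism $\Gal(f(t_0,x),k)\cong D$ from \cite[Lemma 2]{KN2}, and the orbit--factor dictionary. Part (1) is where the real content lies, and your two steps check out: for $k$-rationality of $Q=P\cap\Omega^D$, trivial inertia gives $[k_P:k]=|D|$, while the decomposition group of $P$ in $\Omega/\Omega^D$ is $D\cap D_P=D$, so $[k_P:k_Q]=|D|$ as well, forcing $k_Q=k$ (equivalently, $\Gal(k_P/k_Q)$ is the image of all of $D$, so $k_Q=k_P^{\Gal(k_P/k)}=k$); and for $DG=A$, the identification $A/G\cong\Gal(k'/k)$ with $k'=\oline k\cap\Omega$, the compatibility of reduction mod $P$ with restriction to $k'$ (reduction is injective on $k'$ since its nonzero elements are units at $P$), and the surjectivity of $\Gal(k_P/k)\to\Gal(k'/k)$ together give that $D$ surjects onto $A/G$. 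Your opening remark that all assertions are conjugation-invariant, so that a single choice of $P$ suffices, is also the right bookkeeping, since $D_{t_0}$ is only a conjugacy class. The one hypothesis you lean on implicitly and could flag explicitly is that $\delta_f(t_0)\neq 0,\infty$ guarantees $(t-t_0)$ is unramified in the full splitting field $\Omega$ (not just that $f(t_0,x)$ is separable); this is standard and is in any case part of the setup of \cite[Lemma 2]{KN2}, which you invoke.
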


As a corollary to Theorem \ref{thm:main-new} we therefore have the following strengthening of Theorem \ref{thm:spec}. Let $N_1$ be the constant from Remark \ref{rem:same-const} for $g=1$. 
\begin{thm}\label{thm:spec-fld}
Let $f(t,x)\in k(t)[x]$ be a polynomial with Galois group $A=A_n$ or $S_n$ over $k(t)$ for $n>N_1$.  
If $D\leq A$ appears as the Galois group of $f(t_0,x)\in k[X]$ for infinitely many $t_0\in k$, then 
either $A_{n-1}\leq D\leq S_n$, or 
$A_{n-2}\lneq D \leq (S_{n-2}\times S_2)$ and the ramification of the fixed field $k(y)$ of $A\cap S_{n-1}$ is listed in Proposition \ref{prop:ramification}.
\end{thm}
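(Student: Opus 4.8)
The plan is to deduce Theorem~\ref{thm:spec-fld} as a direct consequence of the geometric classification in Theorem~\ref{thm:main-new} (equivalently its function-field form Theorem~\ref{thm:ff-main-new}) together with the Hilbert-irreducibility machinery recorded in Proposition~\ref{prop:spec}. The key point is that a subgroup $D\leq A$ can occur as the Galois group $\Gal(f(t_0,x),k)$ for infinitely many $t_0\in k$ \emph{only if} the corresponding quotient $\tilde X/D$ has genus at most $1$. First I would make this precise: by Proposition~\ref{prop:spec}(1), for every such $t_0$ (away from the finite set of roots and poles of $\delta_f$) we have $t_0\in f_D(X_D(k))$, where $X_D=\tilde X/D$. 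Thus if $D=D_{t_0}$ occurs for infinitely many $t_0$, the curve $X_D$ carries infinitely many $k$-rational points, and since $k$ is a finitely generated field of characteristic $0$, Faltings' theorem (in the Mordell--Lang / Lang formulation for finitely generated fields) forces $X_D$ to have genus $\leq 1$. So the hypothesis translates into the purely geometric condition that $\tilde X/D$ has genus at most $1$.

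Having reduced to a genus condition, the next step is to invoke the classification. Let $G\leq A$ be the geometric Galois group $\Gal(\Omega/\oline k(t))$, which is again $A_n$ or $S_n$ by hypothesis, and set $H:=D\cap G$, the geometric decomposition group. By Proposition~\ref{prop:spec}(1) we have $DG=A$, so $D$ and $G$ generate $A$; in particular $D$ surjects onto $A/G$, which is trivial when $A=A_n$ and of order at most $2$ when $A=S_n$. The genus of $\tilde X/D$ equals the genus of the fixed field $\Omega^{H}$ viewed over $\oline k$ (passing to the algebraic closure does not change the genus of a curve), so $\Omega^{H}$ has genus $\leq 1\leq g$ with $g=1$. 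I would then apply Theorem~\ref{thm:ff-main-new} with this $g=1$ and with $n>N_1$: either $H\geq A_{n-1}$, or $A_{n-2}\lneq H\leq S_{n-2}\times S_2$ with the ramification of $F/\oline k(t)$ listed in Proposition~\ref{prop:ramification}. Here $F=\Omega^{G\cap S_{n-1}}$ is exactly the fixed field of a point stabilizer, i.e.\ the field $k(y)$ in the statement, so the ramification clause transfers verbatim.

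The final step is to lift the conclusion about $H=D\cap G$ back to a conclusion about $D$ itself, matching the two cases in the theorem. In the first alternative $H\geq A_{n-1}$; since $D\supseteq H$ and $DG=A$, and since the only subgroups of $A=A_n$ or $S_n$ containing $A_{n-1}$ are those sandwiched between $A_{n-1}$ and $S_n$, this yields $A_{n-1}\leq D\leq S_n$, the first displayed case. In the second alternative $A_{n-2}\lneq H\leq S_{n-2}\times S_2$; again using $D\cap G=H$ together with $DG=A$ and a short index count (the index $[A:G]\leq 2$ controls how much larger $D$ can be than $H$), one checks that $D$ still satisfies $A_{n-2}\lneq D\leq S_{n-2}\times S_2$, and the ramification of $k(y)/k(t)$ is as recorded in Proposition~\ref{prop:ramification}. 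I expect the main obstacle to be precisely this last bookkeeping, namely verifying that the relation $D\cap G=H$ with $DG=A$ does not allow $D$ to escape the stated bounds; one must rule out, for instance, a $D$ whose intersection with $G$ is an allowed $H$ but which itself is some larger non-standard subgroup of $A$. This is handled by noting that $S_{n-2}\times S_2$ and the sandwich $A_{n-1}\leq D\leq S_n$ are each stable under the relevant extension by $A/G\cong C_2$, but it requires care to state uniformly for both $A=A_n$ and $A=S_n$.
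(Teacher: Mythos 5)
Your proposal is correct and follows essentially the same route as the paper's own proof: Proposition~\ref{prop:spec} plus Faltings' theorem to force $X_D$ (geometrically irreducible since $DG=A$) to have genus $\leq 1$, then Theorem~\ref{thm:main-new} applied to $C:=D\cap G$, and finally lifting the conclusion from $C$ back to $D$. The paper compresses the last step into one sentence (``it therefore follows that $D$ is also of the required form''), whereas you correctly flag it as the point needing care; your normalizer/index-two argument (e.g.\ $D$ normalizes $H=S_{n-2}\times_{C_2}S_2$, whose normalizer in $S_n$ lies in $S_{n-2}\times S_2$) is exactly the right way to close it.
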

\begin{proof}
By Proposition \ref{prop:spec}, if $D = \Gal(f(t_0,x),\mQ)$ for infinitely many $t_0\in \mQ$, then 
$X_D(k)$ is infinite. As in addition $X_D$ is geometrically irreducible, $X_D$ is of genus $\leq 1$ by Faltings' theorem.  
Letting $G$ be the Galois group of $f$ over $\oline k(t)$ and
 setting $C:=D\cap G$,  Theorem \ref{thm:main-new} therefore implies that either $A_{n-1}\leq C\leq S_n$ or $A_{n-2}\lneq C\leq S_{n-2}\times S_2$ and the ramification of $f_{A_1}$, for a point stabilizer $A_1$, is described by Proposition \ref{prop:ramification}. It therefore follows that $D$ is also of the required form. 
\end{proof}

The following is a well known corollary to Proposition \ref{prop:spec}.
\begin{cor}(\cite[Corollary 2.5]{KN})
\label{cor:kronecker}\label{cor:faltings}
Let $f(t,x)\in k(t)[x]$ be an irreducible polynomial with 
Galois groups $A$ and $G$ over $k(t)$ and $\oline k(t)$, respectively. 
Then $\Red_f$ and $\bigcup_D f_D(X_D(k))$
differ by a finite set, where $D$ runs over maximal intransitive subgroups of $A$ for which $X_D$ is of genus $\leq 1$ and $DG =A$. 
\end{cor}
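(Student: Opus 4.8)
The plan is to reduce everything to the dictionary furnished by Proposition~\ref{prop:spec}, applied outside the finite set $S_0\subset k$ consisting of the poles of the coefficients of $f$ together with the roots and poles of the discriminant $\delta_f$. For $t_0\notin S_0$ the specialized group $\Gal(f(t_0,x),k)$ is a conjugate of the decomposition group $D_{t_0}$, and Proposition~\ref{prop:spec} tells us three things: $t_0\in f_{D_{t_0}}(X_{D_{t_0}}(k))$, $D_{t_0}G=A$, and $f(t_0,x)$ is reducible if and only if $D_{t_0}$ is intransitive. The whole statement is then book-keeping with these facts plus Faltings' theorem.

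For the inclusion $\Red_f\subseteq \bigcup_D f_D(X_D(k))$ up to a finite set, take $t_0\in\Red_f\setminus S_0$. Then $D_{t_0}$ is intransitive on the roots of $f$, so it stabilizes setwise a proper nonempty subset of them and is therefore contained in a set-stabilizer $D'$, i.e.\ a maximal intransitive subgroup of $A$. Since $D_{t_0}\subseteq D'$ and $D_{t_0}G=A$ we get $D'G=A$; and the inclusion $D_{t_0}\subseteq D'$ induces a $k$-morphism $X_{D_{t_0}}\to X_{D'}$ commuting with the maps to $\mP^1$, whence $t_0\in f_{D_{t_0}}(X_{D_{t_0}}(k))\subseteq f_{D'}(X_{D'}(k))$. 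If $X_{D'}$ has genus $\leq 1$, then $D'$ lies in the indexing set and $t_0$ is accounted for. If instead $X_{D'}$ has genus $\geq 2$, then $D'G=A$ forces $X_{D'}$ to be geometrically irreducible, so Faltings' theorem makes $X_{D'}(k)$, and hence $f_{D'}(X_{D'}(k))$, finite; as $A$ has only finitely many subgroups, the union of these finitely many finite sets may be absorbed into the exceptional set.

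For the reverse inclusion, let $t_0\in f_D(X_D(k))\setminus S_0$ with $D$ a maximal intransitive subgroup satisfying $DG=A$ and $g_{X_D}\leq 1$. A $k$-rational point of $X_D=\tilde X/D$ lying over the unramified value $t_0$ forces $D_{t_0}$ to be contained in a conjugate of $D$; this is the standard translation between $k$-points of the quotient $\tilde X/D$ over $t_0$ and subconjugacy to $D$ of the image of the specialization homomorphism $\Gal(\oline k/k)\to A$, whose image is $D_{t_0}$. A subgroup of an intransitive group is again intransitive, so $D_{t_0}$ is intransitive, and Proposition~\ref{prop:spec}(2) yields that $f(t_0,x)$ is reducible, i.e.\ $t_0\in\Red_f$. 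Combining the two inclusions shows that $\Red_f$ and $\bigcup_D f_D(X_D(k))$ differ only on the finite set assembled above.

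The main obstacle is the point-to-subgroup correspondence used in the reverse direction: one must verify that a $k$-point of $X_D$ over $t_0$ genuinely pins down $D_{t_0}$ up to subconjugacy in $D$, which rests on the compatibility of the arithmetic specialization map with the geometric covering $\tilde X\to\mP^1$ and on the hypothesis $DG=A$ keeping $X_D$ geometrically irreducible, so that both Faltings' theorem applies and $X_D(k)$ carries its expected meaning. Everything else is the elementary group theory of set-stabilizers together with the finiteness of the subgroup lattice of $A$.
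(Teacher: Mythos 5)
Your proposal is correct and follows exactly the route the paper intends: the paper itself gives no internal proof (it cites \cite[Corollary 2.5]{KN} and frames the statement as a well-known consequence of Proposition~\ref{prop:spec}), and your argument is precisely that standard derivation — one inclusion via Proposition~\ref{prop:spec}, passage to a maximal intransitive overgroup $D'$ with $D'G=A$, and Faltings' theorem to absorb the finitely many genus $\geq 2$ quotients into the exceptional set; the other via the double-coset/decomposition-group dictionary showing a $k$-point of $X_D$ over $t_0$ forces $D_{t_0}$ into a conjugate of $D$, hence intransitivity and reducibility.
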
 

We can now deduce Theorem \ref{thm:HIT}: 
\begin{proof}[Proof of Theorem \ref{thm:HIT}]
Let $A$ and $G$ be the Galois groups of $f$ over $k(t)$ and $\oline k(t)$, respectively. 
 By Corollary  \ref{cor:faltings}, the set $\Red_f$ and the union
 $\bigcup_{D} f_D(X_D(k))$ 
 differ by a finite set, where $D$ runs over the set $\D$ of (conjugacy classes of) maximal intransitive subgroups $D\leq A$ for which $DG=A$ and $X_D$ is of genus $\leq 1$. 
As in the proof of Theorem \ref{thm:spec-fld}, $C:=D\cap G$ and $D$ are either intermediate subgroups between $A_{n-1}$ and $S_n$ or  intermediate subgroups between $A_{n-2}$ and $S_{n-2}\times S_2$ different from $A_{n-2}$. In the latter case, the ramification of $f_{A\cap S_{n-1}}$ is in  \cite[Table 4.1]{NZ}. Since by Proposition \ref{prop:ramification}, at most one of curves $X_{D}$ is of genus $\leq 1$ for $D\in \{S_{n-2}, S_{n-2}\times_{S_2} S_2, A_{n-2}\times S_2\}$, the largest subset of $\mathcal D$ consisting of (conjugacy classes) subgroups $D$ for which $C=D\cap G\in  \{S_n,A_n,S_{n-1},A_{n-1},S_{n-2}\times S_2, S_{n-2}\times_{S_2} S_2, A_{n-2}\times S_2\}$, and in which no group contains the other, is of cardinality $3$, cf.~Figure \ref{diag:subgroups}\footnote{To obtains $3$ such groups $D$, one can pick $\mathcal D = \{A_n, S_{n-1}, S_{n-2}\times S_2\}$.}.
\end{proof}
The following example shows that the three value sets in Corollary \ref{cor:faltings} is a sharp bound.
\begin{exam}
 Let $\Omega$ be the splitting field of $x^a(x-1)^{n-a}-t\in \mQ(t)[x]$ so that $\Gal(\Omega/\mQ)=S_n$ for $n>N_1$. 
Let $f(t,x)\in \mQ(t)[x]$ be  the minimal polynomial of a primitive element for $\Omega$. Letting $\mathcal D = \{S_{n-1},S_{n-2}\times S_2,A_n\}$, the fixed fields $\Omega^D$, $D\in \mathcal D$ are of genus $0$, and moreover $\mathcal D$  is the set of maximal subgroups of $S_n$ with fixed field of genus $\leq 1$. 
Since the action of $\Gal(f(t,x),\mQ)$ is regular,  every  $D\in \mathcal D$ is intransitive. Thus, in this case $Red_f$ is the union of three values sets and a finite set by Corollary \ref{cor:faltings}. 
\end{exam}

\appendix
\section{Proof of Lemma \ref{lem:5br-pts}}\label{sec:appendix}
Let $P\neq \infty$ be a branch point of $\Omega/k(t)$, 
let $Q_1,\ldots,Q_r$ be the places  of $k(x_1)$ over $P$, and $e_i = e_{k(x_1)/k(t)}(Q_i/P)$, $i=1,\ldots,r$ their ramification indices. 
Let $F := \Omega^{\hat G_2}$ for a two point stabilizer $\hat G_2\leq G_1$, so that $F\supset k(x_1)$. 
We shall use the following version of Abhyankar's lemma \cite[Lemma 5.4]{NZ} (with $t=2$). For every pair of distinct places $Q_i, Q_j$, there is a place $Q_{i,j}$ of $F$ which lies over $Q_i$, and has ramification index $\lcm(e_i,e_j)/e_i = e_j/\gcd(e_i,e_j)$. In particular, since $e_j>1$ for some $j$, every place $Q_i$ with $e_i =1$ is a branch point.

Consider the finite branch points $P_1,\ldots,P_s$ of $k(x_1)/k(t)$. 
Since $G\in\{A_n,S_n\}$ is noncyclic and generated by $s$ branch cycles, we have $s>1$. 
Letting $r_i := \#E_{k(x_1)/k(t)}(P_i)$, the Riemann--Hurwitz formula gives: 
\begin{equation}\label{equ:RH-pol}
n-1 = \sum_{i=1}^s (n-r_i).
\end{equation}
Also let $u_i = \#\{e\in E_{k(x_1)/k(t)}(P_i)\,|\,e=1\}$ and $v_i =\#\{e\in E_{k(x_1)/k(t)}(P_i)\,|\,e> 1\}.$ 

If $s\geq 3$, \eqref{equ:RH-pol} gives $\sum_{i=1}^s r_i = (s-1)n+1$. 
Since $u_i + 2v_i \leq n$, we have $v_i\leq (n-u_i)/2$. Thus $r_i = u_i +v_i\leq (n+u_i)/2$. 
In combination with \eqref{equ:RH-pol} this gives 
$\sum_{i=1}^s(n+u_i)/2 \geq (s-1)n+1$ or $\sum_{i=1}^su_i\geq (s-2)n+2$. In particular by the above Abhyankar lemma, $F/k(x_1)$ has  at least $\sum_{i=1}^s u_i\geq n+2> 5$ branch points.

It remains to consider the case $s=2$. 
We first claim that if the ramification of $k(x_1)/k(t)$ is not one of the types (I2.3)-(I2.8) in Table \ref{table:NZ1}, then $\Omega/\Omega^{G_1}$ has at least $5$ branch points. The proof of this claim is similar to the proof of case I2 of \cite[Proposition 10.1]{NZ} with $u=1$. We give the details for completeness: 
If $u_1+u_2\geq 5$, the conclusion follows from the above Abhyankar lemma. Henceforth assume $2\leq u_1+u_2\leq 4$. By the Riemann--Hurwitz formula one has:
$$ n-1 = \sum_{i=1}^2\sum_{e\in E_i}(e-1)  = \sum_{i=1}^2\left(v_i + \sum_{e\in E_i, e\geq 3}(e-2)\right), $$
where $E_i:=E_{k(x_1)/k(t)}(P_i)$, $i=1,2$. Since $v_i = r_i-u_i$ and $r_1+r_2 = n+1$ by \eqref{equ:RH-pol}, 
\begin{equation}\label{equ:sharp} \sum_{e\in E_1, e\geq 3}(e-2) + \sum_{e\in E_2, e\geq 3}(e-2) = u_1 + u_2 -2. \end{equation}
If $u_1+u_2 =2$, then by \eqref{equ:sharp} the orbits of the branch cycles $x_1,x_2$ over $P_1,P_2$ are of length $\leq 2$ and hence $x_1$ and $x_2$ are involutions. In this case $G$ is generated by two involutions, contradicting that $G$ is not dihedral.

If $u_1+u_2 =3$, then by \eqref{equ:sharp} there is a single $e_0\in E_1\cup E_2$ that is greater than $2$, and $e_0 = 3$. In this case the resulting ramification types with primitive\footnote{The primitivity condition is used merely in order to ensure that  for two distinct branch points $P,Q$ of $k(x_1)/k(t)$, there is no prime which divides all  entries in both $E_{k(x_1)/k(t)}(P)$ and $E_{k(x_1)/k(t)}(Q)$, see {\cite[Lemma 9.1]{NZ}}.} Galois group are (I2.3)-(I2.5) in Table \ref{table:NZ1}.  If $u_1+u_2 = 4$, then either there is a single $e_0\in E_1\cup E_2$ that is greater than $2$ and $e_0 = 4$, or there are exactly two $e_1,e_2\in E_1\cup E_2$ that are greater than $2$ and $e_1=e_2=3$. 
In the former case, the ramification types with primitive Galois group are (I2.6)-(I2.8) in Table \ref{table:NZ1}. 
Now assume that $e_{k(x_1)/k(t)}(Q_i)=3$, $i=1,2$ and $e_{k(x_1)/k(t)}(Q)\leq 2$ for any place $Q$ over $P_1$ or $P_2$. Applying the above  Abhyankar lemma shows that $Q_1,Q_2$ and the $u_1+u_2=4$ unramified places are all branch points of $F/k(x_1)$, giving more than $5$ branch points, proving the claim. 

To treat cases (I2.3)-(I2.8) from Table \ref{table:NZ1}, note that one of $E_{k(x_1)/k(t)}(P_i)$, $i=1,2$ has an entry which is larger than three. WLOG assume it is $P_1$. 
Then the above  Abhyankar lemma shows that each of the places $Q$ of $k(x_1)$ over $P_1$ with ramification index $2$ is a branch point of $F/k(x_1)$. For each of the types there are at least $(n-7)/2\geq 5$ such places, completing the proof of (1). 

For part (2), recall that the natural action of $G$ on $S=\{1,\ldots,n\}$ is equivalent to its action on $G/G_1$, and that the action of $G$ on $2$-sets from $S$ is equivalent to its action on $G/G_2$. Under this equivalence, for a place $P$ of $k(t)$  with branch cycle $x_P\in G$,   there is a one to one correspondence between the orbits $U$ of $x_P$ on $2$-sets and the places $Q_U$ of $\Omega^{G_2}$ lying over $P$. 

Given orbits $R_1, R_2, R_3\subseteq S$ of $x_P$ with lengths $r_1,r_2,r_3$, respectively, such that $r_1$ does not divide $\lcm(r_2,r_3)$, we claim that every place $Q_U$ of $\Omega^{G_2}$ lying over $P$, corresponding to an orbit $U\subseteq R_2\cup R_3$ on $2$-sets, is a branch point of $\Omega^{G_2}(x_1)/\Omega^{G_2}$. 
Note that since  $1\not\in G_2$, similarly to the above correspondence,  the orbits $\hat U$ of $x_P$ on pairs $(s,C)$ where $s\in S$ and $C\subseteq S$ is a $2$-set are in one to one correspondence with the places $Q_{\hat U}$ of $\Omega^{G_1\cap G_2} = \Omega^{G_2}(x_1)$. Moreover, the correspondence is picked so that $Q_{\hat U}$ lies over a place $Q_U$ (resp.~$Q_R$) of $\Omega^{G_2}$ (resp.~$k(x_1)$) if and only if $U$ (resp.~$R$) is the image of $\hat U$ under the projection $(s,C)\mapsto C$ (resp.~$(s,C)\mapsto s$). Hence, given a place $Q_{R_1}$ of $k(x_1)$ and a place $Q_U$ of $\Omega^{G_2}$ for $U\subseteq R_2\cup R_3$ with $R_1\neq R_2,R_3$. Since $r_1$ does not divide $\lcm(r_2,r_3)$, $R_1\neq R_2,R_3$ and hence for every orbit $U$ of $x_P$ acting on $2$-sets from $R_2\cup R_3$, there is a place $Q_{\hat U}$ of $\Omega^{G_2}(x_1)$ lying over $Q_U$ and $Q_{R_1}$. Now by Abhyankar's lemma $e(Q_{\hat U}/P) = \lcm(r_1,e)$ where $e:=e(Q_{U}/P)$, and $e$ divides  $\lcm(r_2,r_3)$. Thus $e(\hat Q_U/Q_U) = \lcm(r_1,e)/e = r_1/(r_1,e)>1$, and $Q_U$ is a branch point, proving the claim. 

For types (I1.1)-(I2.2), the branch cycle $x_3$ of the last branch point $P_3$ has an orbit $R_1$ of length $2$ which larger than $\lcm(r_2,r_3)$ for any two fixed points $R_2,R_3$ of $x_3$. Since there are $n-2$ such fixed points, we have at least $(n-2)(n-3)/2$ branch points. Similarly, for types (I2.3)-(I2.8), the branch cycle $x_2$ of the last branch point $P_2$ has an orbit $R_1$ of length $3$ or $4$ which is larger than $\lcm(r_2,r_3)=2$ for any two length $2$ orbits $R_2,R_3$ of $x_2$. Since there are at least $(n-7)/2$ length $2$ orbits of $x_2$, $\Omega^{G_2}(x_1)/\Omega^{G_2}$ has at least ${(n-7)/2 \choose 2}>5$ branch points. 
%

\bibliographystyle{plain}

\end{document}